\numberwithin{equation}{section}
\newtheorem{theorem}{Theorem}[section]
\newtheorem{lemma}[theorem]{Lemma}
\newtheorem{corollary}[theorem]{Corollary}
\newtheoremstyle{example}{\topsep}{\topsep}%
  {}
  {}
  {\bfseries}
  {}
  {\newline}
	{\thmname{#1}\thmnumber{ #2}\thmnote{ #3}}
\theoremstyle{example}
\newtheorem{example}{Example}[section]
\newcommand{\pd}[2]{\frac{\partial#1}{\partial#2}} 
\newcommand{\E}{\mathbb{E}} 
\newcommand{\p}{\mathbb{P}} 
\newcommand{\eps}{\varepsilon} 
\DeclareMathOperator{\Id}{Id}
\begin{document}


\title{Modeling Flocks and Prices: Jumping Particles with an Attractive Interaction}

\author{
	M\'arton Bal\'azs
	\thanks{Institute of Mathematics, Budapest University of Technology; \texttt{balazs@math.bme.hu}; research partially supported by the Hungarian Scientific Research Fund (OTKA) grants K60708, F67729, K100473, by T\'AMOP - 4.2.2.B-10/1-2010-0009, and by the Bolyai Scholarship of the Hungarian Academy of Sciences.}
	\and
	Mikl\'os Z. R\'acz
	\thanks{Department of Statistics, University of California, Berkeley; \texttt{racz@stat.berkeley.edu}; most of this work was done while the author was at the Institute of Mathematics, Budapest University of Technology.}
	\and
	B\'alint T\'oth
	\thanks{Institute of Mathematics, Budapest University of Technology; \texttt{balint@math.bme.hu}; research partially supported by the Hungarian Scientific Research Fund (OTKA) grant K60708, K100473 and by T\'AMOP - 4.2.2.B-10/1-2010-0009.}
	}

\date{\today}

\maketitle

\begin{abstract}
We introduce and investigate a new model of a finite number of particles jumping forward on the real line. The jump lengths are independent of everything, but the jump rate of each particle depends on the relative position of the particle compared to the center of mass of the system. The rates are higher for those left behind, and lower for those ahead of the center of mass, providing an attractive interaction keeping the particles together. We prove that in the fluid limit, as the number of particles goes to infinity, the evolution of the system is described by a mean field equation that exhibits traveling wave solutions. A connection to extreme value statistics is also provided.
\end{abstract}

\noindent {\bf Keywords:}
Competing particles, center of mass, mean field evolution, traveling wave, fluid limit, extreme value statistics

\bigskip\noindent
{\bf 2010 Mathematics Subject Classification:}
Primary: 60K35; Secondary: 60J75

\section{Introduction}

Interacting particle systems always show interesting behavior and present a challenge for rigorous treatment. In this paper we introduce and investigate a new type of model which---to the best of our knowledge---has not emerged in the literature before. The system consists of a finite number of particles moving forward on the real line $\mathbb R$. The center of mass of these particles is computed and, given a configuration, each particle jumps after an exponential waiting time that depends on its relative position compared to the center of mass. The more a particle lags behind the center of mass, the less its mean waiting time is; on the other hand, the more a particle is ahead of the center of mass, the more its mean waiting time is. When the exponential time has passed, the particle jumps forward a random distance, independent of everything else, drawn from a probability distribution concentrated on the non-negative half-line. While doing so, it naturally moves the center of mass forward by $1/n$ times its jump length as well, where $n$ is the number of particles. Thus the dynamics has an effect of keeping the particles together, while randomness creates fluctuations in the system.

Several real-life phenomena can be modeled this way: the evolution of prices set by agents trying to sell the same product in a market, the position of goats in a herd, etc. Mathematical finance is an area of active research, and many processes arising in finance are modeled with interacting particle systems, see e.g.~the monograph on stochastic portfolio theory by Fernholz~\cite{fernholz2002stochastic}, Banner, Fernholz and Karatzas~\cite{banner2005atlas}, and the survey by Fernholz and Karatzas~\cite{fernholz2008stochastic}. Herding behavior has drawn serious interest in the physics literature, a few examples are Vicsek, Czir\'ok, Ben-Jacob, Cohen and Shochet~\cite{vicsek1995ntp}, Czir\'ok, Barab\'asi and Vicsek~\cite{czirok1999cms}, Ballerini, Cabibbo, Candelier, Cavagna, Cisbani, Giardina, Lecomte, Orlandi, Parisi, Procaccini, Viale and Zdravkovic~\cite{ballerini2008cir}.

The problem of competing individuals is interesting even without dependence of the dynamics on the actual position of the particles. A series of papers have been written on models where the competing particles evolve according to i.i.d.~increments: Ruzmaikina and Aizenman~\cite{ruzmaikina2005characterization}, Arguin~\cite{arguin2008competing}, Arguin and Aizenman~\cite{arguin2009structure}, Shkolnikov~\cite{shkolnikov2009competing}. The main point of investigation in these papers is characterizing point processes for which the joint distribution of the gaps between the particles is invariant under the evolution.

Several authors have considered continuous models in which the motion (drift) of the particles depends on their rank in the spatial ordering; see e.g.~Banner, Fernholz and Karatzas~\cite{banner2005atlas}, Pal and Pitman~\cite{pal2008one}, Chatterjee and Pal~\cite{chatterjee2008phase}, Shkolnikov~\cite{shkolnikov2010competing,shkolnikov2010large,shkolnikov2011large}. These models are central objects of study in modeling capital distributions in equity markets. Branching diffusion-type models have also been investigated with a linear interaction between particles, see e.g.\ Greven and Hollander~\cite{greven2007phase}, Engl\"ander~\cite{englander2010center} and the references therein. Linearity of the interaction is an important ingredient in these papers making the analysis somewhat easier.

Closer to the present work are jump processes with interaction between the jumping particles. A related model is introduced in ben-Avraham, Majumdar and Redner~\cite{benavraham2007tmr}, where particles compete with a leader who looks back only at the contestant directly behind him. Interaction between the leader and the followers is realized via a linear dependence of the mean jump length on the relative position of the leader and the followers. A process with rank-dependent jump rates is investigated in Greenberg, Malyshev and Popov~\cite{greenberg1995stochastic}, and one with similar, but randomized jump rates in Manita and Shcherbakov~\cite{manita2005asymptotic}. Maybe the closest model to our system is the one featured in Grigorescu and Kang~\cite{grigorescu2008steady}, where particles jump in a configuration-dependent manner, and the authors give special focus to the situation where the dependence of particle advances on the configuration is via the center of mass of the particles.

In our case, positive jump lengths are independently chosen, and the jump rate of a particle depends on its position relative to the center of mass. This \emph{jump rate function} is non-increasing and non-negative, hence clearly cannot be linear in nontrivial cases. The setting is thus genuinely different from situations in the above examples. The non-increasing jump rate function creates a monotonicity in the system which keeps the particles together. This gives rise to a traveling wave-type stationary distribution. We address the following questions.
\begin{itemize}
\item The two-particle case is, of course, tractable. We compute the stationary distribution of the gap between the two particles for several choices of the jump length distribution. However, we cannot handle the situation even for three particles. (The process is not reversible.)
\item As the number of particles goes to infinity, we expect that the motion of the center of mass converges to a deterministic one. This motion, together with the distribution of the particles, satisfy a deterministic integro-differential equation in this so-called \emph{fluid limit}. We call the resulting equation the mean field equation. We write up this equation by heuristic arguments and, in some cases, we solve it for a traveling wave form, i.e.\ the stationary solution as seen from the center of mass. Even finding the traveling wave solution to this mean field equation is nontrivial.

The reason for expecting a deterministic limiting equation is the averaging property of the dynamics. We in fact prove this phenomena in this paper later on for some cases. As the motion of the center of mass becomes deterministic, the particles decouple, each independently following the deterministically moving center of mass. Therefore in the limit it does not matter if we consider a deterministic cloud of particles (the limit coming from the empirical distribution of $n\to\infty$ particles), or a single particle in the corresponding distribution that is driven by the deterministic center of mass. The evolution of the fluid limit can therefore be called a mean field dynamics. Of course, the motion of the center of mass must be chosen appropriately to be consistent with the cloud or distribution of particles. In the traveling wave case, the distribution, as seen from the center of mass, becomes stationary, and the speed of the center of mass becomes constant in time.
\item Some traveling wave solutions lead us to Gumbel distributions, which naturally pose the question whether an extreme value theory interpretation can be given to the process. An idea by Attila R\'akos shows that there is indeed such a connection.
\item Under restrictive assumptions on the jump rates, we prove that
the evolution of the process in the fluid limit indeed satisfies the deterministic integro-differential equation we formulated by heuristic arguments. We use weak convergence methods and uniqueness results on the integro-differential equation. Since the center of mass is not a bounded function of the particle positions, standard weak convergence techniques need to be extended to our situation.
\end{itemize}

This last point is the most interesting from a mathematical point of view. Our proof is a standard fluid limit argument, somewhat similar to those in Manita and Shcherbakov~\cite{manita2005asymptotic} and Grigorescu and Kang~\cite{grigorescu2008steady}: we use an extended notion of weak convergence, tightness, and martingales to show convergence to the unique solution of the limiting integro-differential equation. As opposed to Greenberg, Malyshev and Popov~\cite{greenberg1995stochastic}, asymptotic independence of the particles (as the number of particles tends to infinity) will be a consequence of the proof, rather than a method to rely on. This is because our case of dependence through the center of mass does not seem to provide an easy a priori way of showing asymptotic independence of the particles.

We start by precisely describing the model in Section~\ref{sec:model}. We formulate our main results in Section~\ref{sec:results}:  first, we formulate the limiting mean field equation together with some traveling wave solutions, and we also state the fluid limit theorem. These are the results we believe to be the most important, and additional statements can be found in subsequent sections. We end this section with a collection of open questions. We proceed by analyzing the two-particle case in Section~\ref{sec:two_particle}. The mean field equation is investigated in Section~\ref{sec:mf}. In particular, a connection with extreme value statistics is heuristically explained in Section~\ref{sec:gumbel}. Finally, Section~\ref{sec:fluid_limit} contains our fluid limit argument under some restrictive conditions on the jump rates.

The reader may find an extended version of this paper in~\cite{balazs2011modeling}, where we provide more details on arguments that are not essential for our main results, particularly in Sections~\ref{sec:two_particle} and~\ref{sec:mf}. In addition, see~\cite[Section 7]{balazs2011modeling} for simulation results which support the idea that the mean field equation holds in the fluid limit in greater generality.

\section{The model}\label{sec:model}

To model the phenomena described in the Introduction, we present the following model. The model consists of $n$ particles on the real line. Let us denote the position of the particles by $x_{1}\left(t\right), x_{2}\left(t\right),\dots,x_{n}\left(t\right)$, where $t$ represents time and $x_{i}\left(t\right) \in \mathbb{R}$ for $i=1,\dots,n$. We now describe the dynamics. Given a configuration, the mean position (center of mass) of the particles is $m_{n}\left(t\right) = \frac{1}{n}\sum_{i=1}^{n}{x_{i}\left(t\right)}$. Now let us denote by $w : \mathbb{R} \to \mathbb{R^{+}}$ the \emph{jump rate function}, a positive and monotone decreasing function. The dynamics is a continuous-time Markov jump process. The $i^{\text{th}}$ particle, positioned at $x_{i}\left(t\right)$ at time $t$, jumps with rate $w\left(x_{i}\left(t\right) - m_{n}\left(t\right)\right)$. That is, conditioned on the configuration of the particles, jumps happen independently after an exponentially distributed time, with the parameter of the $i^{\text{th}}$ particle being $w\left(x_{i}\left(t\right) - m_{n}\left(t\right)\right)$. 

When a particle jumps, the length of the jump is a random positive number $Z$ from a specific distribution. This is independent of time and the position of the particle and all other particles as well. We scale the jump length so that $\E Z = 1$, and for technical reasons we assume that $Z$ has a finite third moment. If this distribution is absolutely continuous then its density is denoted by $\varphi$.

Our focus is on the \emph{empirical measure} of the $n$ particle process at time $t$:
\begin{equation*}
\mu_{n}\left( t \right) = \frac{1}{n} \sum\limits_{i=1}^{n} \delta_{x_{i} \left( t \right)}.
\end{equation*}
More generally, we are interested in the \emph{path} of the empirical measure, $\mu_{n} \left( \cdot \right) = \left( \mu_{n} \left( t \right) \right)_{t \geq 0}$, with initial condition $\mu_{n} \left( 0 \right) = \frac{1}{n} \sum_{i=1}^{n} \delta_{x_{i}\left( 0 \right)}$.

We introduce the following notation in order to abbreviate formulas, which is also well suited for weak convergence type arguments. For a function $f: \mathbb{R} \to \mathbb{R}$ and a probability measure $P$ on $\mathbb{R}$ let
\begin{equation*}
\left\langle f, P \right\rangle := \int\limits_{-\infty}^{\infty} f \left( x \right) P\left( dx \right).
\end{equation*}
We also use the following notation: $\left\langle f\left( x \right), P \right\rangle \equiv \left\langle f, P \right\rangle$. Using this notation, we have $m_n \left( t \right) = \left\langle x, \mu_n \left( t \right) \right\rangle$.

Now we can present the generator of the process acting on the integrated test functions as above. Let $C_{b}$ denote the space of bounded and continuous functions from $\mathbb{R}$ to $\mathbb{R}$, and let $\Id$ denote the identity from $\mathbb{R}$ to $\mathbb{R}$: $\Id \left( x \right) = x$. Define
\begin{equation}\label{eq:H_def}
H := \left\{ f \in C_{b}:\ \left|f\right| \leq 1 \right\} \cup \left\{\Id\right\}.
\end{equation}
This set of functions will be important later on. For a function $f \in H$ we have:
\begin{equation*}
L \left\langle f, \mu_n \left( t \right) \right\rangle = \left\langle \left( \E \left( f \left( x + Z \right) \right) - f\left( x \right) \right) w\left( x - m_{n}\left( t \right) \right), \mu_{n} \left( t \right) \right\rangle,
\end{equation*}
where $L$ is the generator of the process. The particular choice of $H$ is so that we can apply the generator $L$ to the center of mass $m_n \left( t \right)$.

\section{Results}\label{sec:results}

Our main contributions and results are (1) introducing the model, (2) formulating via heuristics the limiting behavior as the number of particles goes to infinity and analyzing this limiting behavior, and (3) showing rigorously that (under some assumptions on the jump rates) the limiting behavior of the process in the fluid limit indeed satisfies the deterministic integro-differential equation that we formulated. This section describes in more detail results describing (2) and (3).

The case of a fixed number $n$ of particles is difficult to treat. Of course the $n = 2$ case is doable: in Section~\ref{sec:two_particle} we look at two specific cases of the model, and in both we compute the stationary distribution of the gap between the two particles. At the end of the section we illustrate the difficulties that arise for more than two particles in Figure~\ref{fig:haromszogek_sz}.

Our main interest is in the large $n$ limit (so called \emph{fluid limit}) of the particle system. Let us assume that the jump length distribution has a density $\varphi$. Heuristically, in the fluid limit the empirical measure $\mu_n \left( t \right)$ of the particles at time $t$ becomes an absolutely continuous measure with density $\varrho \left( \cdot, t \right)$ whose time evolution is given by the so-called mean field equation
\begin{equation}
\label{eq:master}
\pd{\varrho\left(x,t\right)}{t} = - w\left(x-m\left(t\right)\right) \varrho\left(x,t\right) + \int\limits_{-\infty}^{x} w\left(y-m\left(t\right)\right) \varrho\left(y,t\right) \varphi\left(x-y\right)dy,
\end{equation}
where
\begin{equation}
\label{eq:mean}
m\left(t\right) = \int\limits_{-\infty}^{\infty} x \varrho\left(x,t\right) dx
\end{equation}
is the mean of the probability density $\varrho\left( \cdot ,t\right)$. A priori we assume nothing of the density $\varrho \left( \cdot, t \right)$ other than it having a finite mean. The first term on the right hand side of~\eqref{eq:master} accounts for the particles hopping forward from position $x$ at time $t$: at time $t$ there is a density of $\varrho\left(x,t\right)$ particles at $x$, who jump forward with rate $w\left(x-m\left(t\right)\right)$. The second, integral term on the right hand side of~\eqref{eq:master} accounts for the particles who are at position $y < x$ jumping forward at time $t$ to position $x$: at time $t$ there is a density of $\varrho\left(y,t\right)$ particles at $y$, who jump forward with rate $w\left(y-m\left(t\right)\right)$, and need to jump forward exactly $x-y$ to land at $x$, the probability of this being proportional to $\varphi\left(x-y\right)$.

Our first results concern solutions to the mean field equation~\eqref{eq:master} in the form of a traveling wave
\begin{equation}
\label{eq:TravellingWave}
\varrho(x,t)=\rho(x-ct),
\end{equation}
where $c$ is the constant velocity of the wave and $\rho$ is centered, i.e.\ $\int x \rho \left( x \right) dx = 0$. Traveling waves of this form represent stationary distributions as seen from the center of mass, which travels at a constant speed~$c$. Substituting~\eqref{eq:TravellingWave} into the mean field equation~\eqref{eq:master}, and then writing $x$ instead of $x-ct$ for simplicity everywhere, we arrive at the following equation:
\begin{equation}
\label{eq:stationary}
-c\rho'\left(x\right) = -w\left(x\right)\rho\left(x\right)+\int\limits_{-\infty}^{x} w\left(y\right)\rho\left(y\right)\varphi\left(x-y\right)dy.
\end{equation}
Recall that the jump rate function $w$ is a non-increasing function; any non-constant $w$ gives a strong enough attraction to keep the particles together, i.e.\ there is a probability density traveling wave solution $\rho$. This is formulated in the following theorem for exponentially distributed jump lengths.
\begin{theorem}\label{thm:exp_stac}
Consider the specific case when the jump length distribution is exponential with mean one, and assume that $w$ is non-constant. Let $c_0$ be the unique solution of
\begin{equation}
\label{eq:speed_derivation}
\int\limits_{-\infty}^{\infty} x e^{\int\limits_{0}^{x}\left(\frac{1}{c_0}w\left(s\right)-1\right)ds} dx = 0.
\end{equation}
Equation~\eqref{eq:stationary} has a centered probability density solution $\rho$ if and only if $c = c_0$. In this case it is given by 
\begin{equation}
\label{eq:stationary_dist}
\rho\left(x\right) = Ke^{\int\limits_{0}^{x} \left(\frac{1}{c}w\left(s\right)-1\right) ds},
\end{equation}
where $K$ is an appropriate normalizing constant. It is immediate that the probability density $\rho$ of~\eqref{eq:stationary_dist} has at least exponentially decreasing tails.
\end{theorem}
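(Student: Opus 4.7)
The plan is to exploit the memorylessness of the exponential jump distribution to turn~\eqref{eq:stationary} into a linear first-order ODE, solve it explicitly, and then use integrability plus the centering condition to pin down $c_0$.

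First I would substitute $\varphi(u) = e^{-u}$ (for $u \geq 0$) and set $J(x) := e^{-x}\int_{-\infty}^{x} w(y)\rho(y)e^{y}\,dy$, so that~\eqref{eq:stationary} becomes $-c\rho'(x) = -w(x)\rho(x) + J(x)$. A direct computation gives $J'(x) = w(x)\rho(x) - J(x)$. Eliminating $J$ via $J = w\rho - c\rho'$ and equating with the differentiated relation $J' = (w\rho)' - c\rho''$ produces $(w\rho)' = c(\rho' + \rho'')$, which integrates once to
\[
c\rho'(x) = (w(x) - c)\rho(x) + C
\]
for some constant $C \in \mathbb{R}$.

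To force $C = 0$, integrate this ODE over $[0, R]$:
\[
c\bigl[\rho(R) - \rho(0)\bigr] = \int_{0}^{R}\bigl(w(y) - c\bigr)\rho(y)\,dy + CR.
\]
Since $\rho \in L^{1}$ (as a probability density) and is $C^{1}$ from the ODE itself, one has $\rho(R) \to 0$ as $R \to \infty$; moreover $w$ is bounded on $[0, \infty)$ by $w(0)$, so the integral on the right stays bounded as $R \to \infty$. Hence $CR$ must remain bounded, which forces $C = 0$. The remaining equation $\rho'(x)/\rho(x) = w(x)/c - 1$ integrates directly to formula~\eqref{eq:stationary_dist}, and conversely one checks by a reverse calculation that any function of that form solves~\eqref{eq:stationary}.

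For the selection of the speed, set $W(x) := \int_{0}^{x} w(s)\,ds$ and $w_{\pm} := \lim_{x \to \pm\infty} w(x) \in [0, \infty]$, which exist by monotonicity and satisfy $w_{+} < w_{-}$ since $w$ is non-constant. Because $W(x)/x \to w_{\pm}$ as $x \to \pm\infty$, formula~\eqref{eq:stationary_dist} is integrable (with exponentially decaying tails) if and only if $w_{+} < c < w_{-}$. On this open interval the map $c \mapsto \int x\rho(x)\,dx$ is continuous, tends to $+\infty$ as $c \downarrow w_{+}$ and to $-\infty$ as $c \uparrow w_{-}$ (the corresponding tail flattens and carries the mean to infinity), and is strictly decreasing in $c$ by monotone likelihood ratio: $\rho_{c_{2}}(x)/\rho_{c_{1}}(x) \propto \exp\bigl((1/c_{2} - 1/c_{1}) W(x)\bigr)$ is monotone in $x$ because $W$ is strictly increasing. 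Hence there is a unique $c_{0} \in (w_{+}, w_{-})$ at which the mean vanishes, and rewriting this zero-mean condition with $\rho$ as in~\eqref{eq:stationary_dist} yields exactly~\eqref{eq:speed_derivation}.

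The step I expect to be the main obstacle is the justification that $C = 0$: the only a priori hypothesis on $\rho$ is that it is a centered probability density, so the tail decay and regularity needed to discard the constant of integration must be bootstrapped from the ODE itself. The finite-$R$ integration trick above seems to handle this cleanly once one observes that $\rho \in L^{1} \cap C^{1}$ forces $\rho(R) \to 0$, and that $w$ is bounded on $[0, \infty)$ by monotonicity.
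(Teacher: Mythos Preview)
Your approach is essentially the paper's: both exploit the exponential kernel to differentiate the convolution away and reduce~\eqref{eq:stationary} to the first-order ODE $c\rho'=(w-c)\rho+C$, then kill the constant of integration and integrate. The paper eliminates the constant by integrating the ODE over $(-R,R)$ and invoking the speed identity $c=\int w\rho$ (derived in~\eqref{eq:speed_when_const}), whereas you integrate over $[0,R]$ and argue directly that a nonzero $C$ is incompatible with $\rho$ being a probability density.

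One small gap to flag, since you singled this step out yourself: the assertion that ``$\rho\in L^{1}\cap C^{1}$ forces $\rho(R)\to 0$'' is false in general (integrable $C^{1}$ functions with non-decaying thin bumps exist). Fortunately you do not need it. From your integrated identity
\[
c\rho(R)=c\rho(0)+\int_{0}^{R}(w-c)\rho+CR,
\]
the first two terms on the right are bounded as $R\to\infty$, so if $C\neq 0$ then $\rho(R)$ tends linearly to $+\infty$ or $-\infty$, contradicting respectively the integrability or the nonnegativity of $\rho$. That repairs the argument with no extra input. Your monotone-likelihood-ratio discussion of the existence and uniqueness of $c_{0}$ goes beyond what the paper actually proves; there the uniqueness of $c_{0}$ is asserted in the theorem statement but not argued in the proof.
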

There are several specific jump rate functions $w$ for which the resulting stationary distribution is of interest. In particular, the case of $w \left( x \right) = e^{- \beta x }$, where $\beta > 0$, leads to the generalized Gumbel distribution:
\begin{corollary}\label{cor:gumbel}
If $w\left(x\right)=e^{-\beta x}$, where $\beta > 0$, then from Theorem~\ref{thm:exp_stac} we obtain for the stationary density
\begin{equation}
\label{eq:rhobeta1.1}
\rho_{\beta}\left(x\right) = \frac{\beta}{\Gamma\left(\frac{1}{\beta}\right)}e^{-\left(x-\frac{\psi\left(\frac{1}{\beta}\right)}{\beta}\right)-e^{-\beta\left(x-\frac{\psi\left(\frac{1}{\beta}\right)}{\beta}\right)}},
\end{equation}
where $\psi$ is the digamma function:
\begin{equation*}
\psi\left(x\right) = \frac{\Gamma'\left(x\right)}{\Gamma\left(x\right)}.
\end{equation*}
Specifically for $\beta = 1$ the stationary distribution is the centered standard Gumbel distribution.
The velocity of the wave is
\begin{equation*}
c=\frac{1}{\beta}e^{-\psi\left(\frac{1}{\beta}\right)}.
\end{equation*}
\end{corollary}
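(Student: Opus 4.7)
The plan is to apply Theorem~\ref{thm:exp_stac} directly with the specific choice $w(s)=e^{-\beta s}$, and then to identify the resulting expression as a shifted generalized Gumbel density through a single substitution and two standard Gamma-function integrals. Nothing new needs to be proved beyond Theorem~\ref{thm:exp_stac}; the task is purely computational, but the chain of identifications should be carried out in the right order, so that the speed $c$ is pinned down \emph{after} the shape of $\rho$ is available in a convenient form.

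First, I would plug $w(s)=e^{-\beta s}$ into the exponent appearing in~\eqref{eq:stationary_dist}. Since
\begin{equation*}
\int_{0}^{x}\left(\tfrac{1}{c}e^{-\beta s}-1\right)ds
=\tfrac{1}{c\beta}-\tfrac{1}{c\beta}e^{-\beta x}-x,
\end{equation*}
absorbing the constant $e^{1/(c\beta)}$ into the prefactor gives
\begin{equation*}
\rho(x)=K'\,\exp\!\bigl(-x-a\,e^{-\beta x}\bigr),\qquad
a:=\tfrac{1}{c\beta},
\end{equation*}
with $K'$ still to be determined from normalization and $c$ (equivalently $a$) still to be fixed from the centering condition $\int x\rho(x)\,dx=0$ that, by Theorem~\ref{thm:exp_stac}, characterizes the admissible wave speed $c_0$.

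Next I would perform the change of variables $u=a e^{-\beta x}$, so that $x=\tfrac{1}{\beta}(\log a-\log u)$ and $dx=-\tfrac{du}{\beta u}$. Normalization becomes
\begin{equation*}
1=K'\int_{-\infty}^{\infty}e^{-x-ae^{-\beta x}}\,dx
=\tfrac{K'\,a^{-1/\beta}}{\beta}\int_{0}^{\infty}u^{1/\beta-1}e^{-u}\,du
=\tfrac{K'\,a^{-1/\beta}}{\beta}\,\Gamma\!\bigl(\tfrac{1}{\beta}\bigr),
\end{equation*}
giving $K'=\beta a^{1/\beta}/\Gamma(1/\beta)$. The same substitution in the centering condition yields, after separating the $\log a$ and $\log u$ pieces,
\begin{equation*}
0=\log a\,\cdot\Gamma\!\bigl(\tfrac{1}{\beta}\bigr)-\int_{0}^{\infty}\log u\,\cdot u^{1/\beta-1}e^{-u}\,du
=\log a\,\cdot\Gamma\!\bigl(\tfrac{1}{\beta}\bigr)-\Gamma'\!\bigl(\tfrac{1}{\beta}\bigr),
\end{equation*}
where I use the standard identity $\int_{0}^{\infty}(\log u)\,u^{s-1}e^{-u}\,du=\Gamma'(s)$. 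Hence $\log a=\psi(1/\beta)$, so $a=e^{\psi(1/\beta)}$ and $c=1/(a\beta)=\frac{1}{\beta}e^{-\psi(1/\beta)}$, matching the stated velocity. Finally, setting $\mu:=\psi(1/\beta)/\beta$, one checks that $a\,e^{-\beta x}=e^{-\beta(x-\mu)}$ and $K'e^{-x}=\frac{\beta}{\Gamma(1/\beta)}e^{-(x-\mu)}$, so that
\begin{equation*}
\rho(x)=\tfrac{\beta}{\Gamma(1/\beta)}\exp\!\bigl(-(x-\mu)-e^{-\beta(x-\mu)}\bigr),
\end{equation*}
which is~\eqref{eq:rhobeta1.1}. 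Uniqueness of $c_0$ from Theorem~\ref{thm:exp_stac} guarantees this is the only admissible wave.

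The only non-routine step is the centering computation, where one must recognize $\Gamma'(s)/\Gamma(s)=\psi(s)$; the rest is bookkeeping with the substitution $u=a e^{-\beta x}$. For $\beta=1$ one has $\psi(1)=-\gamma$ (the Euler--Mascheroni constant) and $\Gamma(1)=1$, reducing $\rho_1$ to $\exp\!\bigl(-(x+\gamma)-e^{-(x+\gamma)}\bigr)$, the centered standard Gumbel density, as claimed.
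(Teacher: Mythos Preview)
Your proof is correct and follows essentially the same route as the paper's own argument (Example~\ref{ex:exp}): plug $w(s)=e^{-\beta s}$ into~\eqref{eq:stationary_dist}, reduce the normalization and centering integrals to Gamma integrals via the substitution $u=a e^{-\beta x}$ (the paper uses the equivalent two-step substitution $y=x+\tfrac{\log(c\beta)}{\beta}$, $z=e^{-\beta y}$), and identify $\log a=\psi(1/\beta)$ from $\int_0^\infty(\log u)u^{s-1}e^{-u}\,du=\Gamma'(s)$. The only difference is cosmetic---you parametrize by $a=1/(c\beta)$ and combine the two substitutions into one---so there is nothing to add.
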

The Gumbel distribution often arises in extreme value theory (Gumbel~\cite{gumbel}), e.g.\ this is the limiting distribution of the rescaled maximum of a large number of independent identically distributed random variables drawn from a distribution that has no upper bound and decays faster than any power law at infinity (e.g.\ exponential or Gaussian distributions). More generally, the $k$th largest value, once rescaled, follows the generalized Gumbel distribution (Bertin~\cite{bertin2005gfgs}, Clusel and Bertin~\cite{clusel2008global}, Gumbel~\cite{gumbel}). Therefore the question arises whether there is an underlying extremal process in the dynamics. It turns out that there is a connection between the mean field model and extreme value theory: this is discussed in Section~\ref{sec:gumbel}.

The validity of the mean field equation is strongly supported by simulation results for various special cases of the model, see~\cite[Section 7]{balazs2011modeling}. We expect this to be true in quite some generality (for fairly general jump rate functions and jump length distributions).

 The main result of the paper is to rigorously validate this approximation. That is, our goal is to show that in the fluid limit---when the number of particles tends to infinity---the mean field equation~\eqref{eq:master} holds. However, when trying to rigorously prove this, technical difficulties may arise. Therefore we impose some restrictions on the jump rate function and the jump length distribution. Our main restriction is that we suppose that $w$ is bounded: $w \left( x \right) \leq a$. We also impose some additional (smoothness) restrictions on $w$, but boundedness is the main restriction. Our restrictions on the jump length distribution are light: we assume that it has a finite third moment.

In order to precisely state our main theorem, some preparations are needed. First notice that the mean field equation~\eqref{eq:master} can be rewritten as the evolution of a probability measure $\mu \left( \cdot \right)$ using test functions from $H$. Moreover, we do not have to assume that the jump length distribution has a density either.
Define
\begin{equation}\label{eq:A_def}
\begin{aligned}
A_{t,f} \left( \mu\left( \cdot \right) \right) :&= \left\langle f, \mu \left( t \right) \right\rangle - \left\langle f, \mu \left( 0 \right) \right\rangle - \int\limits_{0}^{t} \left\langle \left( \E \left( f \left( x + Z \right) \right) - f\left( x \right) \right) w\left( x - m\left( s \right) \right), \mu \left( s \right) \right\rangle ds\\
&= \left\langle f, \mu \left( t \right) \right\rangle - \left\langle f, \mu \left( 0 \right) \right\rangle - \int\limits_{0}^{t} L \left\langle f, \mu \left( s \right) \right\rangle  ds
\end{aligned}
\end{equation}
where
\begin{equation*}
m\left( t \right) = \left\langle x, \mu \left( t \right) \right\rangle.
\end{equation*}
We say that $\mu\left( \cdot \right)$ satisfies the mean field equation if
\begin{equation}\label{eq:Atf}
A_{t,f} \left( \mu \left( \cdot \right) \right) = 0
\end{equation}
for every $t \geq 0$ and $f \in H$.

The position of the center of mass of the particles plays an important role in our model, and therefore we always assume it exists (and is finite). Consequently, it is natural to look at $\mu_n \left( t \right)$ as an element of $\mathcal{P}_1 \left(\mathbb{R}\right)$, the space of probability measures on $\mathbb{R}$ having a finite first (absolute) moment. In order to talk about convergence in $\mathcal{P}_1 \left( \mathbb{R} \right)$ we need to define a metric on $\mathcal{P}_1 \left( \mathbb{R} \right)$. We use the so-called \emph{1-Wasserstein metric}. For $\mu, \nu \in \mathcal{P}_{1} \left( \mathbb{R} \right)$ define the set of coupling measures
\begin{equation*}
\Pi \left( \mu, \nu \right) := \left\{ \pi \in M_{1} \left( \mathbb{R} \times \mathbb{R} \right)\ :\ \pi \left( \cdot \times \mathbb{R} \right) = \mu, \pi \left( \mathbb{R} \times \cdot \right) = \nu \right\},
\end{equation*}
where $M_1 \left( \mathbb{R} \times \mathbb{R} \right)$ denotes the space of probability measures on $\mathbb{R} \times \mathbb{R}$. The \emph{$1$-Wasserstein metric} on the space $\mathcal{P}_{1} \left( \mathbb{R} \right)$ is defined as
\begin{equation}\label{eq:was_metric}
d_{1} \left( \mu, \nu \right) := \inf \left\{ \int_{\mathbb{R} \times \mathbb{R}} \left| x - y \right| \pi \left( dx, dy \right)\ :\ \pi \in \Pi \left( \mu, \nu \right) \right\}
\end{equation}
for $\mu, \nu \in \mathcal{P}_{1} \left( \mathbb{R} \right)$. Much is known about the metric space $\left( \mathcal{P}_{1} \left( \mathbb{R} \right), d_{1} \right)$; we will state what we need later.

The natural space in which to consider $\mu_{n} \left( \cdot \right)$ is the Skorohod space $D\left( [0,\infty), \mathcal{P}_{1} \left( \mathbb{R} \right) \right)$: we know that $\mu_{n} \left( \cdot \right)$ is a piece-wise constant function in $\mathcal{P}_{1} \left( \mathbb{R} \right)$, having jumps whenever a particle jumps. Similarly, for any test function $f$ (for instance $f \in H$), the natural space in which to consider $\left\langle f, \mu_{n} \left( \cdot \right) \right\rangle$ is the Skorohod space $D\left( [0,\infty), \mathbb{R} \right)$.

Let $S$ be a metric space and let $P_n$ and $P$ be probability measures on $S$. We say that $P_n$ \emph{converges weakly} to $P$, denoted by $P_n \Rightarrow_{n} P$, if $\int_S f dP_n \to \int_S f dP$ for every bounded, continuous real function $f$ on $S$. If $X_n$ and $X$ are random elements of $S$, with $X_n$ distributed according to $P_n$ and $X$ distributed according to $P$, then we say that $X_n$ \emph{converges weakly} to $X$ in $S$, denoted by $X_n \Rightarrow_n X$, if $P_n \Rightarrow_n P$. For more on weak convergence, see~\cite{billingsley1999convergence}.

In order to prove weak convergence of $\left\{ \mu_{n} \left( \cdot \right) \right\}_{n\geq 1}$ in the appropriate Skorohod space, we have to make the following assumptions on the initial distribution of the particles.

\textbf{Assumption 1.} Our first assumption is that
\begin{equation}\label{eq:assumption1}
\lim\limits_{L \to \infty} \limsup\limits_{n \to \infty} \p \left( \frac{1}{n} \sum\limits_{i=1}^{n} \left| x_{i}\left( 0 \right) \right| > L \right) = 0.
\end{equation}

\textbf{Assumption 2.} Our second assumption is that there exists a constant $\hat{c}$ such that for every $\eps > 0$ 
\begin{equation*}
\sup\limits_{n} \p \left( \frac{1}{n} \sum\limits_{i=1}^{n} \mathbf{1} \left[ \left| x_{i} \left( 0 \right) \right| > \hat{c} \eps^{-2/3} \right] \geq \eps / 4 \right) \leq \eps  /3.
\end{equation*}
(Note: the exponent 2/3 has no special significance. This is simply because we assume $\E \left( Z^3 \right) < \infty$ for simplicity. We could assume $\E \left( Z^{2+\epsilon} \right) < \infty$ for some small $\epsilon > 0$, in which case we could replace the exponent 2/3 with an exponent of $2/ \left( 2 + \epsilon \right)$ in Assumption 2.)

A sufficient and simple condition for both Assumption 1 and 2 to hold is that
\begin{equation*}
\sup_n \E \left( \frac{1}{n} \sum_{i=1}^{n} \left| x_i \left( 0 \right) \right|^{3} \right) < \infty.
\end{equation*}

\textbf{Assumption 3.} Our third assumption is that
\begin{equation}\label{eq:assumption3}
\mu_{n} \left( 0 \right) \Rightarrow_{n} \nu
\end{equation}
in $\mathcal{P}_{1}\left(\mathbb{R}\right)$, where $\nu \in \mathcal{P}_{1}\left( \mathbb{R} \right)$ is a deterministic initial profile.

We are now ready to state our main theorem.

\begin{theorem}\label{thm:main}
Suppose that the non-increasing jump rate function $w$ is bounded, and that it is differentiable with bounded derivative. Suppose the jump length $Z$ is scaled such that $\E Z = 1$ and that it has a finite third moment. Suppose that Assumptions 1, 2 and 3 from above hold. Then
\begin{equation}
\mu_{n} \left( \cdot \right) \Rightarrow_{n} \mu \left( \cdot \right)
\end{equation}
in $D\left( [0,\infty), \mathcal{P}_{1}\left( \mathbb{R} \right) \right)$, where $\mu\left( \cdot \right)$ is the unique deterministic solution to the mean field equation~\eqref{eq:Atf} with initial condition $\nu$.
\end{theorem}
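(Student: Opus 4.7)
The plan is to follow a standard fluid-limit strategy built around four pillars: a Dynkin martingale, tightness in the Skorohod space $D([0,\infty), \mathcal{P}_1(\mathbb{R}))$, identification of subsequential limits as solutions of the mean field equation, and uniqueness of those solutions. First, I would apply Dynkin's formula to see that for every $f \in H$, the process $M_n^f(t) := A_{t,f}(\mu_n(\cdot))$ is a martingale. Its predictable quadratic variation is governed by the jump structure: a single jump of the $i$-th particle at time $s$ changes $\langle f, \mu_n(s)\rangle$ by $\left( f(x_i(s)+Z) - f(x_i(s)) \right)/n$. Using $w \leq a$ together with either $|f| \leq 1$ (for $f \in H \setminus \{\Id\}$) or finiteness of $\E Z^2$ (for $f = \Id$), Doob's maximal inequality yields $\E \sup_{s \leq t} (M_n^f(s))^2 = O(1/n)$, so $M_n^f \to 0$ uniformly on compact time intervals in probability.

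Second, I would establish tightness of $\{\mu_n(\cdot)\}$ in $D([0,\infty), \mathcal{P}_1(\mathbb{R}))$. Tightness of marginals in $\mathcal{P}_1(\mathbb{R})$ requires uniform control of first moments; this is supplied at time $0$ by Assumption 1 and propagated to positive times because the mass at distance $\geq L$ from the origin can grow only at a rate bounded in terms of $a$ and $\E Z$. Path regularity then follows from Aldous's criterion applied to the coordinate processes $\langle f, \mu_n(\cdot)\rangle$ for $f$ ranging over a countable convergence-determining class in $H$ together with $f = \Id$: the compensators are Lipschitz in time, and the martingale parts vanish by the previous step. Upgrading weak-topology tightness to tightness in the $1$-Wasserstein topology requires the uniform-in-$n$ tail control supplied by Assumption 2.

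Third, for any subsequential limit $\mu(\cdot)$, I would show $A_{t,f}(\mu(\cdot)) = 0$ almost surely for every $f \in H$ and $t \geq 0$. Since the jump sizes of $\langle f, \mu_n(s)\rangle$ are $O(1/n)$, any limit path is continuous almost surely, which removes Skorohod-topology pathologies. Using the smoothness hypotheses on $w$ (bounded and Lipschitz) together with the uniform tail bounds, the functional $\mu(\cdot) \mapsto A_{t,f}(\mu(\cdot))$ is continuous at continuous paths; combined with $M_n^f \to 0$, this yields the identification. Uniqueness of solutions to $A_{t,f}(\mu(\cdot)) = 0$ starting from $\nu$ is then established by a Gronwall argument on $d_1(\mu^{(1)}(t), \mu^{(2)}(t))$ between two solutions, taking an optimal coupling and bounding the right-hand side of the evolution by $C \int_0^t d_1(\mu^{(1)}(s), \mu^{(2)}(s))\, ds$ using boundedness and Lipschitz continuity of $w$. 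Tightness plus uniqueness of subsequential limits gives the claimed weak convergence.

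The main obstacle is the unboundedness of $\Id$, and hence of the center-of-mass functional $\mu \mapsto \langle x, \mu \rangle$, which is not continuous in the weak topology of probability measures. This is exactly what forces the whole framework to live in $\mathcal{P}_1(\mathbb{R})$ with the $1$-Wasserstein metric, and it is precisely what Assumption 2 is engineered to handle: it delivers uniform tail control strong enough to upgrade weak tightness to $\mathcal{P}_1$-tightness, to guarantee that $\langle \Id, \mu_n(\cdot)\rangle$ passes to the limit in the Skorohod sense, and to make the $f = \Id$ instance of the martingale and continuity arguments go through. Extending the standard bounded-test-function machinery of weak convergence to accommodate this single unbounded test function in $H$ is the technical heart of the proof.
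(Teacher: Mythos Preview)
Your proposal is correct and follows the same overall architecture as the paper: a Dynkin martingale with quadratic variation $O(1/n)$ controlled via Doob, tightness in $D([0,\infty),\mathcal{P}_1(\mathbb{R}))$, identification of subsequential limits through continuity of $A_{t,f}$ on continuous paths, and uniqueness by Gronwall, assembled via Prohorov.

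Two implementation choices differ. For tightness, the paper's primary argument does not use Aldous's criterion; instead it proves a modification of Perkins' theorem adapted to $(\mathcal{P}_1(\mathbb{R}),d_1)$ (the original version fails because Wasserstein convergence is stronger than weak convergence) and verifies its hypotheses by coupling the interacting system to an i.i.d.\ rate-$a$ system, with Assumption~2 supplying exactly the quantitative tail bound needed for the $\mathcal{P}_1$-compact-containment condition. The paper does sketch an Aldous-based alternative, attributed to Kurtz, which is closer to what you propose; your route is thus a legitimate variant that trades the Perkins machinery for stopping-time estimates. For uniqueness, the paper runs Gronwall not on $d_1$ but on the metric $d_H(\mu,\nu)=\sup_{f\in H}|\langle f,\mu\rangle-\langle f,\nu\rangle|$, which is tailored so that the increments appearing in the mean field equation are immediately comparable to the metric; your $d_1$-based version would also go through (via Kantorovich--Rubinstein duality and the fact that $g_f$ inherits Lipschitz and boundedness from a $1$-Lipschitz $f$), but the paper's choice makes the estimates more transparent.
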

The proof of this theorem is in Section~\ref{sec:fluid_limit}. The argument follows the standard method for proving weak convergence results. Namely, based on Prohorov's theorem, we need to do three things:
\begin{itemize}
 \item Prove tightness of $\left\{ \mu_{n}\left( \cdot \right) \right\}_{n\geq 1}$ in the Skorohod space $D\left( [0,\infty), \mathcal{P}_{1}\left( \mathbb{R} \right) \right)$.
 \item Identify the limit of $\left\{ \mu_{n}\left( \cdot \right) \right\}_{n\geq 1}$: here we essentially have to show that the limit solves the deterministic mean field equation.
 \item Prove uniqueness of the solution to the mean field equation with a given initial condition.
\end{itemize}

\subsection{Open questions}\label{sec:open}

We end this section with some open questions.
\begin{itemize}
\item Theorem~\ref{thm:main} is a law of large numbers type result, so it is natural to ask about the fluctuations around this first order limit. In particular, how does the variance of the empirical mean $m_n \left( t \right)$ scale with $n$ and $t$? Preliminary simulations suggest that this variance grows linearly in $t$, and for fixed $t$ decays in $n$ as $n^{- \alpha}$ for some $\alpha > 0$. (The simulations are inconclusive of what the exponent $\alpha$ should be.)
\item Once the order of fluctuations of the empirical mean $m_n \left( t \right)$ is known, can we determine the limiting distribution of $m_n \left( t \right)$ with appropriate scaling?
\item Going back to the finite particle system: can we really not deal with a fixed number ($n \geq 3$) of particles and determine the stationary distribution viewed from the center of mass in this case?
\item In Theorem~\ref{thm:main} we make assumptions on the jump rate function $w$, the jump length distribution, and the initial distribution of the particles. Most of the assumptions are fairly weak, except for the assumption that $w$ is bounded, which is the main restrictive assumption. We believe that these assumptions can be weakened, and in particular we are interested in removing the assumption that $w$ is bounded. Currently all three steps of the proof (tightness, identifying the limit, uniqueness of the limit) use estimates which involve the bound on $w$. One crucial thing that would be needed is control over the probability of a particle ``falling behind'' by a lot. Can one prove the same result for unbounded $w$, e.g.\ $w\left( x \right) = e^{-x}$?
\end{itemize}

\section{Two-particle system}\label{sec:two_particle}
As an introduction to the model, we look at the simplest case: when $n=2$, i.e.\ when there are two competing particles. In particular, we look at two specific cases of the model, and in each case we compute the stationary distribution of the gap between the two particles. At the end of the section we illustrate the difficulties that arise in the case of $n \geq 3$ particles.

\subsection{Deterministic jumps}

First, let us look at the model when the length of the jumps is not random but a deterministic positive number. We always have the freedom to scale the system so that this length is equal to unity. So, in this subsection, we look at the model with deterministic jumps of length 1. 

In the case of two particles, $x_{1}\left(t\right)$ and $x_{2}\left(t\right)$, we are interested in how far away they move from each other, thus in the gap $g\left(t\right) = \left|x_{1}\left(t\right) - x_{2}\left(t\right)\right|$. For simplicity let us suppose that $x_{1}\left(0\right) - x_{2}\left(0\right) \in \mathbb{Z}$, so that $g\left(t\right) \in \mathbb{N}$ for all $t\geq 0$. In this case $g\left(t\right)$ is a continuous-time Markov process on $\mathbb{N}$, with infinitesimal generator $Q$ given by the model in the following way. Since the gap can only change by 1, $Q$ is a tridiagonal matrix. If the gap is 0, i.e. $x_{1}\left(t\right) = x_{2}\left(t\right)$, then both particles jump with rate $w\left(0\right)$ and whichever one does, the gap changes to 1. Therefore the rate at which the gap changes from 0 to 1 is $2w\left(0\right)$ and thus
\begin{equation*}
Q_{0,0} = - 2 w\left(0\right), \qquad Q_{0,1} = 2 w\left(0\right).
\end{equation*}
If the gap between the particles is $k>0$, then the one in the lead jumps with rate $w\left(\frac{k}{2}\right)$, while the one which is behind jumps with rate $w\left(-\frac{k}{2}\right)$. This means that the gap increases by 1 with rate $w\left(\frac{k}{2}\right)$ and decreases by 1 with rate $w\left(-\frac{k}{2}\right)$ and so
\begin{equation*}
Q_{k,k-1} = w\left(-\frac{k}{2}\right), \quad Q_{k,k} = -\left(w\left(-\frac{k}{2}\right) + w\left(\frac{k}{2}\right)\right), \quad Q_{k,k+1} = w\left(\frac{k}{2}\right).
\end{equation*}

When the jump rate function $w$ is a monotone decreasing, non-constant function, then $g\left( t \right)$ is a positive recurrent birth-and-death process with stationary distribution $\mathbf{\pi} = \left(\pi_{0},\pi_{1},\pi_{2},\dots \right)$ given by
\begin{align*}
\pi_{0} &= \frac{1}{1 + 2 \sum\limits_{k=1}^{\infty}{\frac{\prod_{i=0}^{k-1}{w\left(\frac{i}{2}\right)}}{\prod_{i=1}^{k}{w\left(-\frac{i}{2}\right)}}}}\\
\pi_{k} &= 2 \pi_{0}\frac{\prod_{i=0}^{k-1}{w\left(\frac{i}{2}\right)}}{\prod_{i=1}^{k}{w\left(-\frac{i}{2}\right)}}, \quad \text{for } k\geq 1.
\end{align*}
%
%

In particular, if the jump rate function is exponential, i.e.\ $w\left(x\right) = e^{-\beta x}$, where $\beta > 0$, then the stationary distribution shows Gaussian decay, while if the jump rate function is a step function,
\begin{equation*}
w\left(x\right) = 
\begin{cases}
a & \text{if $x < 0$,}
\\
b &\text{if $0 \leq x$,}
\end{cases}
\end{equation*}
where $a>b>0$, then the stationary distribution shows exponential decay; see~\cite[Section 4.1]{balazs2011modeling}.

\subsection{Jump length distribution having a density}

Consider again the case of two particles, but now let us suppose that the distribution of the length of the jumps has a density, and denote this by $\varphi$ as mentioned in Section~\ref{sec:model}. As before, we are interested in the gap $g\left(t\right) = \left|x_{1}\left(t\right) - x_{2}\left(t\right)\right|$ between the two particles. Denote by $p\left(g,t\right)$ the probability density of the gap $g$ at time $t$, which is concentrated on $[0,\infty)$.%
{
\psset{unit=1pt}
\psset{linewidth=0.5pt}
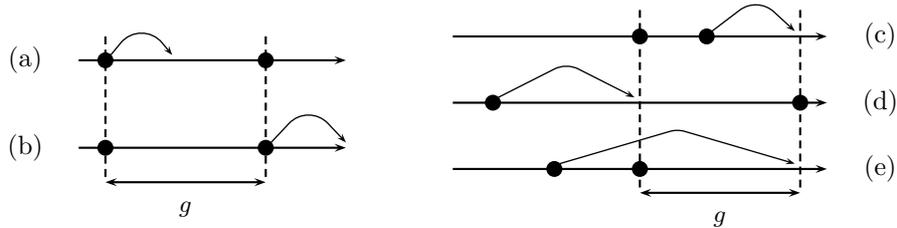
\begin{figure}[ht]
  \centering
  
	\begin{pspicture*}(342,100)
	
		\psline[linewidth=0.8pt]{->}(30,66)(130,66)
		\psline[linewidth=0.8pt]{->}(30,33)(130,33)
		\psline[linewidth=0.8pt,linestyle=dashed, dash=3 2]{-}(40,75)(40,22)
		\psline[linewidth=0.8pt,linestyle=dashed, dash=3 2]{-}(100,75)(100,22)
		\psline[linewidth=0.8pt]{<->}(40,20)(100,20)
		\rput{0}(70,10){\small{$g$}}
		\qdisk(40,66){3}
		\qdisk(40,33){3}
		\qdisk(100,66){3}
		\qdisk(100,33){3}
		\psline[linearc=10]{->}(42,68)(53.5,82)(65,68)
		\psline[linearc=10]{->}(102,35)(116,50)(130,35)
		
		\rput{0}(10,66){(a)}
		\rput{0}(10,33){(b)}
	
		\psline[linewidth=0.8pt]{->}(170,75)(310,75)
		\psline[linewidth=0.8pt,linestyle=dashed, dash=3 2]{-}(240,85)(240,18)
		\psline[linewidth=0.8pt,linestyle=dashed, dash=3 2]{-}(300,85)(300,18)
		\psline[linewidth=0.8pt]{<->}(240,16)(300,16)
		\rput{0}(270,6){\small{$g$}}
		\qdisk(240,75){3}
		\qdisk(265,75){3}
		\psline[linearc=10]{->}(267,77)(284,90)(298,77)
		
		\rput{0}(330,75){(c)}

		\psline[linewidth=0.8pt]{->}(170,50)(310,50)
		\qdisk(185,50){3}
		\qdisk(300,50){3}
		\psline[linearc=10]{->}(187,52)(212.5,65)(238,52)
		
		\rput{0}(330,50){(d)}

		\psline[linewidth=0.8pt]{->}(170,25)(310,25)		
		\qdisk(240,25){3}
		\qdisk(208,25){3}				
		\psline[linearc=10]{->}(210,27)(254,40)(298,27)
		
		\rput{0}(330,25){(e)}

	\end{pspicture*}
	\caption{Events that change the gap length.}\label{fig:jumps}
\end{figure}
}%
The time evolution of the gap is completely described by the time evolution of $p$, which is described by the master equation 
\begin{align*}
\dot{p}\left(g\right) = &-p\left(g\right) \left( w\left( -g/2 \right) + w\left( g/2 \right) \right) + \int\limits_{0}^{g}{p\left(y\right) w\left( y/2 \right) \varphi\left( g-y \right) dy}\\
&+ \int\limits_{g}^{\infty}{p\left(y\right) w\left( -y/2 \right) \varphi\left( y-g \right) dy} + \int\limits_{0}^{\infty}{p\left(y\right) w\left( -y/2 \right) \varphi\left( y+g \right) dy},
\end{align*}
where the overdot denotes time derivative. The first term on the right hand side accounts for the case when the gap length is $g$ and one of the two particles jumps (Fig.~\ref{fig:jumps}a and~\ref{fig:jumps}b). The second term accounts for the case when a gap of length $g$ is created from a previous gap of length $y < g$ due to the leader jumping forward (Fig.~\ref{fig:jumps}c). The third term accounts for the case when a gap of length $g$ is created from a previous gap of length $y > g$ due to the laggard jumping forward, but still remaining the laggard (Fig.~\ref{fig:jumps}d). The fourth term accounts for the case when a gap of length $g$ is created from a previous gap of length $y > 0$ due to the laggard jumping forward and overtaking the leader (Fig.~\ref{fig:jumps}e). It is easy to check that this conserves the total probability, i.e. $\int_{0}^{\infty}{\dot{p}\left(g\right) dg} = 0$.


Our goal is to compute the stationary density, i.e.\ $p$ such that $\dot{p}\left(g\right) = 0$. It is unlikely to be able to find a closed formula for the stationary density in full generality (for general $\varphi$ and $w$), and so we restrict ourselves to the special case when $\varphi(x)=e^{-x} \mathbf{1} [x\geq 0]$ and $w\left( x \right) = e^{- \beta x}$, where $\beta > 0$. 

In this case it can be shown (see~\cite[Section 4.2]{balazs2011modeling} for a detailed calculation) that the stationary density of the gap is
\begin{equation*}
p \left( g \right) = c \frac{1}{ \left( \cosh\left( \frac{\beta g}{2} \right) \right)^{1 + 2 / \beta} }, \quad \text{ for } g > 0,
\end{equation*}
with $c$ being the normalizing constant. In particular, in the special case of $\beta = 2$, the stationary density is
\begin{equation*}
p\left( g \right) = \frac{1}{ \cosh^{2} \left( g \right) }, \quad \text{ for } g > 0.
\end{equation*}

\subsection{Many-particle case}

In the $n \geq 3$ case with deterministic jumps, the particles $x_{1}\left(t\right), \dots , x_{n}\left(t\right)$ form a continuous-time Markov process on an $n$ dimensional lattice. Viewed from the mean position of the particles, this becomes a Markov process on an $n-1$ dimensional lattice. However, even in the $n=3$ case the combinatorics of the problem becomes too difficult to find a closed form for the stationary distribution (see Figure~\ref{fig:haromszogek_sz}). The model with the jump length distribution having a density presents similar difficulties in the $n \geq 3$ case. (As we have seen, the problem is not completely solved even in the $n=2$ case.) Understanding the finite particle case is the subject of future work.
\begin{figure}[ht]
\centering

\begin{pspicture}(12,12)
		\cnodeput{0}(6,6){0}{\tiny{(0,0,0)}}
		\rput{0}(8.5,6){\circlenode{1}{\tiny{(1,-2,1)}}}
		\rput{0}(7.25,8.16506){\circlenode{2}{\tiny{(2,-1,-1)}}}
		\rput{0}(4.75,8.16506){\circlenode{3}{\tiny{(1,1,-2)}}}
		\rput{0}(3.5,6){\circlenode{4}{\tiny{(-1,2,-1)}}}
		\rput{0}(4.75,3.83493){\circlenode{5}{\tiny{(-2,1,1)}}}
		\rput{0}(7.25,3.83493){\circlenode{6}{\tiny{(-1,-1,2)}}}

		\cnodeput{0}(11,6){7}{\tiny{(2,-4,2)}}
		\cnodeput{0}(8.5,10.330127){9}{\tiny{(4,-2,-2)}}
		\cnodeput{0}(3.5,10.330127){11}{\tiny{(2,2,-4)}}
		\cnodeput{0}(1,6){13}{\tiny{(-2,4,-2)}}
		\cnodeput{0}(3.5,1.66987){15}{\tiny{(-4,2,2)}}
		\cnodeput{0}(8.5,1.66987){17}{\tiny{(-2,-2,4)}}
		
		\cnodeput{0}(9.75,8.16506){8}{\tiny{(3,-3,0)}}
		\cnodeput{0}(6,10.330127){10}{\tiny{(3,0,-3)}}
		\cnodeput{0}(2.25,8.16506){12}{\tiny{(0,3,-3)}}
		\cnodeput{0}(2.25,3.83493){14}{\tiny{(-3,3,0)}}
		\cnodeput{0}(6,1.66987){16}{\tiny{(-3,0,3)}}
		\cnodeput{0}(9.75,3.83493){18}{\tiny{(0,-3,3)}}
		
		\pnode(11.4,6.6928){out71}
		\pnode(11.8,6){out72}
		\pnode(11.4,5.30717){out73}
		\pnode(10.15,8.85788){out81}
		\pnode(10.55,8.16506){out82}
		\pnode(8.1,11.02294){out91}
		\pnode(8.9,11.02294){out92}
		\pnode(9.3,10.330127){out93}
		\pnode(5.6,11.02294){out101}
		\pnode(6.4,11.02294){out102}
		\pnode(2.7,10.330127){out111}
		\pnode(3.1,11.02294){out112}
		\pnode(3.9,11.02294){out113}
		\pnode(1.425,8.16506){out121}
		\pnode(1.825,8.85788){out122}
		\pnode(0.6,5.30717){out131}
		\pnode(0.2,6){out132}
		\pnode(0.6,6.6928){out133}
		\pnode(1.825,3.142109){out141}
		\pnode(1.425,3.83493){out142}
		\pnode(3.9,0.977049){out151}
		\pnode(3.1,0.977049){out152}
		\pnode(2.7,1.66987){out153}
		\pnode(6.4,0.977049){out161}
		\pnode(5.6,0.977049){out162}
		\pnode(9.3,1.66987){out171}
		\pnode(8.9,0.977049){out172}
		\pnode(8.1,0.977049){out173}
		\pnode(10.55,3.83493){out181}
		\pnode(10.15,3.142109){out182}
			
		\ncline[linecolor=blue]{<-}{2}{0}
		\ncput*{\tiny{$w(0)$}}
		\ncline[linecolor=blue]{<-}{4}{0}
		\ncput*{\tiny{$w(0)$}}
		\ncline[linecolor=blue]{<-}{6}{0}
		\ncput*{\tiny{$w(0)$}}
		\ncline[linecolor=red]{<-}{0}{1}
		\ncput*{\tiny{$w(-2)$}}
		\ncline[linecolor=red]{<-}{0}{3}
		\ncput*{\tiny{$w(-2)$}}
		\ncline[linecolor=red]{<-}{0}{5}
		\ncput*{\tiny{$w(-2)$}}
		
		\ncline[linecolor=green]{<-}{3}{2}
		\ncput*{\tiny{$w(-1)$}}
		\ncline[linecolor=green]{<-}{1}{2}
		\ncput*{\tiny{$w(-1)$}}
		\ncline[linecolor=green]{<-}{3}{4}
		\ncput*{\tiny{$w(-1)$}}
		\ncline[linecolor=green]{<-}{5}{4}
		\ncput*{\tiny{$w(-1)$}}
		\ncline[linecolor=green]{<-}{5}{6}
		\ncput*{\tiny{$w(-1)$}}
		\ncline[linecolor=green]{<-}{1}{6}
		\ncput*{\tiny{$w(-1)$}}
		
		\ncline[linecolor=red]{<-}{1}{7}
		\ncput*{\tiny{$w(-4)$}}
		\ncline[linecolor=blue]{<-}{8}{1}
		\ncput*{\tiny{$w(1)$}}
		\ncline[linecolor=red]{<-}{2}{8}
		\ncput*{\tiny{$w(-3)$}}
		\ncline[linecolor=blue]{<-}{9}{2}
		\ncput*{\tiny{$w(2)$}}
		\ncline[linecolor=red]{<-}{2}{10}
		\ncput*{\tiny{$w(-3)$}}
		\ncline[linecolor=blue]{<-}{10}{3}
		\ncput*{\tiny{$w(1)$}}
		\ncline[linecolor=red]{<-}{3}{11}
		\ncput*{\tiny{$w(-4)$}}
		\ncline[linecolor=blue]{<-}{12}{3}
		\ncput*{\tiny{$w(1)$}}
		\ncline[linecolor=red]{<-}{4}{12}
		\ncput*{\tiny{$w(-3)$}}
		\ncline[linecolor=blue]{<-}{13}{4}
		\ncput*{\tiny{$w(2)$}}
		\ncline[linecolor=red]{<-}{4}{14}
		\ncput*{\tiny{$w(-3)$}}
		\ncline[linecolor=blue]{<-}{14}{5}
		\ncput*{\tiny{$w(1)$}}
		\ncline[linecolor=red]{<-}{5}{15}
		\ncput*{\tiny{$w(-4)$}}
		\ncline[linecolor=blue]{<-}{16}{5}
		\ncput*{\tiny{$w(1)$}}
		\ncline[linecolor=red]{<-}{6}{16}
		\ncput*{\tiny{$w(-3)$}}
		\ncline[linecolor=blue]{<-}{17}{6}
		\ncput*{\tiny{$w(2)$}}
		\ncline[linecolor=red]{<-}{6}{18}
		\ncput*{\tiny{$w(-3)$}}
		\ncline[linecolor=blue]{<-}{18}{1}
		\ncput*{\tiny{$w(1)$}}

		\ncline[linecolor=green]{<-}{7}{18}
		\ncput*{\tiny{$w(0)$}}
		\ncline[linecolor=green]{<-}{7}{8}
		\ncput*{\tiny{$w(0)$}}
		\ncline[linecolor=green]{<-}{8}{9}
		\ncput*{\tiny{$w(-2)$}}
		\ncline[linecolor=green]{<-}{10}{9}
		\ncput*{\tiny{$w(-2)$}}
		\ncline[linecolor=green]{<-}{11}{10}
		\ncput*{\tiny{$w(0)$}}
		\ncline[linecolor=green]{<-}{11}{12}
		\ncput*{\tiny{$w(0)$}}
		\ncline[linecolor=green]{<-}{12}{13}
		\ncput*{\tiny{$w(-2)$}}
		\ncline[linecolor=green]{<-}{14}{13}
		\ncput*{\tiny{$w(-2)$}}
		\ncline[linecolor=green]{<-}{15}{14}
		\ncput*{\tiny{$w(0)$}}
		\ncline[linecolor=green]{<-}{15}{16}
		\ncput*{\tiny{$w(0)$}}
		\ncline[linecolor=green]{<-}{16}{17}
		\ncput*{\tiny{$w(-2)$}}
		\ncline[linecolor=green]{<-}{18}{17}
		\ncput*{\tiny{$w(-2)$}}
		
		\ncline[linecolor=blue]{-}{7}{out71}
		\ncline[linecolor=red]{<-}{7}{out72}
		\ncline[linecolor=blue]{-}{7}{out73}
		\ncline[linecolor=red]{<-}{8}{out82}
		\ncline[linecolor=blue]{-}{8}{out81}
		\ncline[linecolor=red]{<-}{9}{out91}
		\ncline[linecolor=blue]{-}{9}{out92}
		\ncline[linecolor=red]{<-}{9}{out93}
		\ncline[linecolor=red]{<-}{10}{out101}
		\ncline[linecolor=blue]{-}{10}{out102}
		\ncline[linecolor=blue]{-}{11}{out111}
		\ncline[linecolor=red]{<-}{11}{out112}
		\ncline[linecolor=blue]{-}{11}{out113}
		\ncline[linecolor=blue]{-}{12}{out121}
		\ncline[linecolor=red]{<-}{12}{out122}
		\ncline[linecolor=red]{<-}{13}{out131}
		\ncline[linecolor=blue]{-}{13}{out132}
		\ncline[linecolor=red]{<-}{13}{out133}
		\ncline[linecolor=red]{<-}{14}{out141}
		\ncline[linecolor=blue]{-}{14}{out142}
		\ncline[linecolor=blue]{-}{15}{out151}
		\ncline[linecolor=red]{<-}{15}{out152}
		\ncline[linecolor=blue]{-}{15}{out153}
		\ncline[linecolor=blue]{-}{16}{out161}
		\ncline[linecolor=red]{<-}{16}{out162}
		\ncline[linecolor=red]{<-}{17}{out171}
		\ncline[linecolor=blue]{-}{17}{out172}
		\ncline[linecolor=red]{<-}{17}{out173}
		\ncline[linecolor=red]{<-}{18}{out181}
		\ncline[linecolor=blue]{-}{18}{out182}		
		
\end{pspicture}

\caption{In the $n=3$ case with deterministic jumps the particles viewed from the mean position form a continuous-time Markov process on a 2 dimensional lattice with directed edges, as seen above. The coordinates in the nodes show $\left( 3\left(x_{1}\left(t\right) - m_{n}\left(t\right)\right), 3\left(x_{2}\left(t\right) - m_{n}\left(t\right)\right), 3\left(x_{3}\left(t\right) - m_{n}\left(t\right)\right) \right)$. (We multiply by 3 only so that we need not write fractions in the figure.) The rate of going from one node to another on a particular edge is indicated on the edge itself, however, once again the argument of the jump rate function $w\left( \cdot \right)$ is multiplied by 3 so that we do not need to write fractions in the figure. The edges going outward from the origin are colored in blue, the ones which are going inward toward the origin are colored in red, while the remaining edges are colored green. Since the rates on the incoming red arrows are larger than those on the outgoing blue arrows, one can see that there is a drift towards the origin, and therefore one expects there to be a stationary distribution on this lattice. However, there can be no detailed balance since there are directed cycles in this graph, and therefore calculating the stationary distribution becomes difficult.}
\label{fig:haromszogek_sz}
\end{figure}

\afterpage{\clearpage}

\section{Mean field}\label{sec:mf}

We now turn our attention to the mean field approximation of the model and study the solutions of the mean field equation~\eqref{eq:master}. First of all, using the fact that $\varphi$ is a probability density function, we can check that the mean field equation~\eqref{eq:master} conserves the total probability, i.e.\ $\int_{-\infty}^{\infty} \pd{\varrho\left(x,t\right)}{t} dx = 0$.

An important quantity is the speed of the mean of the density. This speed can be expressed as a simple integral by differentiating equation~\eqref{eq:mean} with respect to time and using the mean field equation~\eqref{eq:master}:
\begin{align}
\dot{m}\left(t\right) &= \int\limits_{-\infty}^{\infty} x \pd{\varrho\left(x,t\right)}{t} dx\notag\\
&= - \int\limits_{-\infty}^{\infty} x w\left(x-m\left(t\right)\right) \varrho\left(x,t\right) dx + \int\limits_{-\infty}^{\infty} \int\limits_{-\infty}^{x} x w\left(y-m\left(t\right)\right) \varrho\left(y,t\right) \varphi\left(x-y\right)dy dx\notag\\
&= - \int\limits_{-\infty}^{\infty} x w\left(x-m\left(t\right)\right) \varrho\left(x,t\right) dx + \int\limits_{-\infty}^{\infty} w\left(y-m\left(t\right)\right) \varrho\left(y,t\right) \int\limits_{y}^{\infty} x \varphi\left(x-y\right) dx dy\notag\\
&= - \int\limits_{-\infty}^{\infty} x w\left(x-m\left(t\right)\right) \varrho\left(x,t\right) dx +  \int\limits_{-\infty}^{\infty} w\left(y-m\left(t\right)\right) \varrho\left(y,t\right) dy + \int\limits_{-\infty}^{\infty} y w\left(y-m\left(t\right)\right) \varrho\left(y,t\right) dy\notag\\
&= \int\limits_{-\infty}^{\infty} w\left(y-m\left(t\right)\right) \varrho\left(y,t\right) dy,\label{eq:speed_general}
\end{align}
where we used that $\int_0^{\infty} x \varphi \left( x \right) dx = 1$, as assumed in the model. When $m \left( t \right) = ct$ and $\varrho \left( y, t \right) = \rho \left( y - ct \right)$, equation~\eqref{eq:speed_general} becomes
\begin{equation}\label{eq:speed_when_const}
 c = \int\limits_{-\infty}^{\infty} w \left( x \right) \rho \left( x \right) dx.
\end{equation}

We now turn to proving Theorem~\ref{thm:exp_stac}.
\begin{proof}[Proof of Theorem~\ref{thm:exp_stac}]
Substituting $\varphi(x)=e^{-x} \mathbf{1} [x\geq 0]$ into~\eqref{eq:stationary} we get
\begin{equation}
\label{eq:stationary_exp}
-c\rho'\left(x\right) = -w\left(x\right)\rho\left(x\right)+\int\limits_{-\infty}^{x} w\left(y\right)\rho\left(y\right)e^{-\left(x-y\right)}dy.
\end{equation}
First, we take the derivative of the integral term in~\eqref{eq:stationary_exp}. Using the property of the exponential function, that its derivative is itself, we get
\[
\begin{aligned}
\frac{d}{dx}\left(\int\limits_{-\infty}^{x} w\left(y\right)\rho\left(y\right)e^{-\left(x-y\right)}dy\right) &= w\left(x\right)\rho\left(x\right) - \int\limits_{-\infty}^{x} w\left(y\right)\rho\left(y\right)e^{-\left(x-y\right)}dy\\
&= w\left(x\right)\rho\left(x\right) - w\left(x\right)\rho\left(x\right) + c\rho'\left(x\right) = c\rho'\left(x\right),
\end{aligned}
\]
where we used~\eqref{eq:stationary_exp} in the second line. It follows that for some constant $D$ we have
\begin{equation*}
\int\limits_{-\infty}^{x} w\left(y\right)\rho\left(y\right) e^{-\left(x-y\right)}dy = c\rho\left(x\right) + D.
\end{equation*}
Plugging this into~\eqref{eq:stationary_exp} we arrive at
\begin{equation}\label{eq:thm1biz_eq}
-c\rho'\left(x\right)+w\left(x\right)\rho\left(x\right) = c\rho\left(x\right) + D.
\end{equation}
To show that $D=0$, let us integrate both sides of the equation with respect to $x$ from $-\infty$ to $\infty$ (by integrating from $-R$ to $R$ and taking the limit as $R\to\infty$). The integral of the first term on the left hand side is $0$ since the total probability is conserved by the mean field equation, and the integral of the second term on the left hand side is $c$ according to~\eqref{eq:speed_when_const}. Since $\rho$ is a density we also have that the integral of the first term on the right hand side is $c$ as well, therefore
\begin{equation*}
\lim_{R\to\infty} \int\limits_{-R}^{R} D dx = 0,
\end{equation*}
and thus $D=0$. Now rearranging~\eqref{eq:thm1biz_eq} we get
\begin{equation*}
\rho'\left(x\right) = \left(\frac{1}{c}w\left(x\right)-1\right)\rho\left(x\right),
\end{equation*}
from which we arrive at~\eqref{eq:stationary_dist}.
%
%
\end{proof}

We refer to~\cite[Section 5]{balazs2011modeling} for an alternative proof of Theorem~\ref{thm:exp_stac} by I.P.\ T\'oth.

\subsection{Specific examples}



In this subsection we look at the stationary distribution $\rho$ of~\eqref{eq:stationary_dist} in the case of two specific jump rate functions. In both cases we also determine the speed of the traveling wave from Theorem~\ref{thm:exp_stac}. Details of calculations are given only in the case of Example~\ref{ex:exp}, the calculation in the other example is quite simple from Theorem~\ref{thm:exp_stac}.

\begin{example}[\textmd{\textit{Exponential jump rate function}}]\label{ex:exp}
If $w\left(x\right)=e^{-\beta x}$, where $\beta > 0$, then from Theorem~\ref{thm:exp_stac} we obtain
\begin{equation*}
\rho_{\beta}\left(x\right) = K e^{- x - e^{-\beta\left(x+\frac{\log{\left(c\beta\right)}}{\beta}\right)}},
\end{equation*}
where $K$ is an appropriate normalizing constant. Integrating this over $\mathbb{R}$ gives us the value for $K$ and thus the density becomes
\begin{equation*}
\rho_{\beta}\left(x\right) = \frac{\beta}{\Gamma\left(\frac{1}{\beta}\right)}e^{-\left(x+\frac{\log{\left(c\beta\right)}}{\beta}\right)-e^{-\beta\left(x+\frac{\log{\left(c\beta\right)}}{\beta}\right)}}.
\end{equation*}
To check this let us calculate the integral:
\[
\begin{aligned}
\int\limits_{-\infty}^{\infty}\rho_{\beta}\left(x\right)dx &= \int\limits_{-\infty}^{\infty}\frac{\beta}{\Gamma\left(\frac{1}{\beta}\right)}e^{-\left(x+\frac{\log{\left(c\beta\right)}}{\beta}\right)-e^{-\beta\left(x+\frac{\log{\left(c\beta\right)}}{\beta}\right)}}dx\\
&= \frac{\beta}{\Gamma\left(\frac{1}{\beta}\right)}\int\limits_{-\infty}^{\infty}e^{-y-e^{-\beta y}}dy = \frac{\beta}{\Gamma\left(\frac{1}{\beta}\right)}\int\limits_{\infty}^{0}z^{\frac{1}{\beta}}e^{-z}\left(-\frac{dz}{\beta z}\right) =\frac{1}{\Gamma\left(\frac{1}{\beta}\right)}\int\limits_{0}^{\infty}z^{\frac{1}{\beta}-1}e^{-z}dz = 1,
\end{aligned}
\]
using the substitutions $y = x+\frac{\log{\left(c\beta\right)}}{\beta}$ and $z = e^{-\beta y}$. From Theorem~\ref{thm:exp_stac} the speed of the wave $c$ is determined by the fact that $\rho_{\beta}$ is centered, so to compute $c$ we must compute the mean of the density. Using similar substitutions as before we get:
\[
\begin{aligned}
\int\limits_{-\infty}^{\infty}x\rho_{\beta}\left(x\right)dx &= -\frac{\log{\left(c\beta\right)}}{\beta} + \int\limits_{-\infty}^{\infty}\left(x+\frac{\log{\left(c\beta\right)}}{\beta}\right)\rho_{\beta}\left(x\right)dx = -\frac{\log{\left(c\beta\right)}}{\beta} + \frac{\beta}{\Gamma\left(\frac{1}{\beta}\right)}\int\limits_{-\infty}^{\infty}ye^{-y-e^{-\beta y}}dy\\
&= -\frac{\log{\left(c\beta\right)}}{\beta} + \frac{\beta}{\Gamma\left(\frac{1}{\beta}\right)}\int\limits_{\infty}^{0}-\frac{\log{\left(z\right)}}{\beta}z^{\frac{1}{\beta}}e^{-z}\left(-\frac{dz}{\beta z}\right) = -\frac{\log{\left(c\beta\right)}}{\beta} - \frac{1}{\beta\Gamma\left(\frac{1}{\beta}\right)}\int\limits_{0}^{\infty}\log{\left(z\right)}z^{\frac{1}{\beta}-1}e^{-z}dz\\
&= -\frac{1}{\beta}\left(\log{\left(c\beta\right)}+\frac{\Gamma'\left(\frac{1}{\beta}\right)}{\Gamma\left(\frac{1}{\beta}\right)}\right),
\end{aligned}
\]
which is zero when
\begin{equation*}
c=\frac{1}{\beta}e^{-\psi\left(\frac{1}{\beta}\right)},
\end{equation*}
where $\psi$ is the digamma function:
\begin{equation*}
\psi\left(x\right) = \frac{\Gamma'\left(x\right)}{\Gamma\left(x\right)}.
\end{equation*}
Therefore the stationary distribution in this case is given by~\eqref{eq:rhobeta1.1}.
Note the exponential tail of $\rho_{\beta}$ on one side, and the double-exponential tail on the other side. Specifically, in the case of $\beta = 1$ the stationary distribution is the centered standard Gumbel distribution:
\begin{equation*}
\rho_{1}\left(x\right) = e^{ -\left(x + \gamma\right) - e^{ -\left(x + \gamma\right) } },
\end{equation*}
where $\gamma$ is Euler's constant. Moreover, it can be seen that $\rho_{\beta}$ is a rescaled generalized Gumbel distribution with parameter $1/\beta$ (Bertin~\cite{bertin2005gfgs}, Clusel and Bertin~\cite{clusel2008global}, Gumbel~\cite{gumbel}). As mentioned in Section~\ref{sec:results}, the (generalized) Gumbel distribution often arises in extreme value theory, so it is natural to ask whether there is an underlying extremal process in the dynamics. It turns out that there is a connection between the mean field model and extreme value theory when $k = 1/\beta$ is an integer: this is discussed in Section~\ref{sec:gumbel}.
\end{example}

\begin{example}[\textmd{\textit{Jump rate function is a step function}}]\label{ex:step}
Let
\begin{equation*}
w\left(x\right) = 
\begin{cases}
a & \text{if $x < 0$,}
\\
b &\text{if $0 \leq x$,}
\end{cases}
\end{equation*}
where $a>b>0$. Then from Theorem~\ref{thm:exp_stac} the solution to~\eqref{eq:stationary_exp} is
\begin{equation*}
\rho_{a,b}\left(x\right) =
\begin{cases}
K e^{\left(\frac{a}{c} - 1\right) x} & \text{if $x < 0$,}
\\
K e^{\left(\frac{b}{c} - 1\right) x} & \text{if $0 \leq x$,}
\end{cases}
\end{equation*}
where the normalizing constant is
\begin{equation*}
K = \frac{\left(a - c\right) \left(c - b\right)}{c\left(a - b\right)}.
\end{equation*}
The speed of the wave follows from Theorem~\ref{thm:exp_stac}:
\begin{equation*}
c = \frac{a+b}{2},
\end{equation*}
and thus the stationary distribution is
\begin{equation}\label{eq:stac_step}
\rho_{a,b}\left(x\right) = \frac{a-b}{2\left(a+b\right)} e^{-\frac{a-b}{a+b}|x|},
\end{equation}
which is a Laplace distribution.
\end{example}

For two further examples, see~\cite[Section 5.1]{balazs2011modeling}.


\subsection{Connection with extreme value theory}\label{sec:gumbel}

In the previous subsection we have seen that when $\varphi(x)=e^{-x} \mathbf{1} [x\geq 0]$ and $w\left(x\right) = e^{- \beta x}$ then the (rescaled generalized) Gumbel distribution arises as the stationary distribution of the mean field system. This is interesting, because at first sight there is no extremeness in the model. In this section we discuss the connection between the model and extreme value theory. (Here we occasionally neglect mathematical precision for the sake of clarity of the argument.)


Let us start with the standard Gumbel case, i.e.\ $\beta = 1$. Recall that according to our observation in the introduction, we can assume that the center of mass moves with a constant velocity $c$, and we consider a single mean field particle $X(t)$ that jumps with rate $e^{ct - X\left(t\right)}$, so
\begin{equation*}
\p\left(X\left(t\right) \text{ changes between } t \text{ and } t+dt\right) =  e^{ct - X\left(t\right)} dt + o\left(dt\right).
\end{equation*}
When this jump occurs, $X(t)$ increases by a random number drawn from an exponential distribution with parameter 1 (denoted by Exp(1) in the sequel).

How can we view this as an extreme process? The answer to this question was given by Attila R\'akos, whose idea is the following. As time goes forward, let us take gradually more and more new independent and identically distributed Exp(1) random variables, and let us look at the distribution of the maximum of these random variables, which we denote by $Y\left(t\right)$. Specifically, at time $t$ let there be $N\left(t\right) = e^{ct} / c$ random variables altogether, so $Y\left(t\right)$ is the maximum of $N\left(t\right)$ i.i.d.\ random variables. There are $N'\left(t\right) dt = e^{ct} dt$ new random variables trying to ``break the record'' between $t$ and $t + dt$, and thus
\begin{align*}
\p\left(Y\left(t\right) \text{ changes between } t \text{ and } t+dt\right) &= 1 - \left( 1 - e^{-Y\left(t\right)} \right)^{N'\left(t\right) dt}\\
&=  N'\left(t\right) dt e^{-Y\left(t\right)} + o\left(N'\left(t\right) dt e^{-Y\left(t\right)}\right)\\
&= e^{ct - Y\left(t\right)} dt + o\left(e^{ct - Y \left( t \right)} dt\right),
\end{align*}
and if $Y\left(t\right)$ changes, then it jumps according to Exp(1). Here we use the memoryless property of the exponential distribution, i.e.\ the fact that if $T$ is exponentially distributed then
\begin{equation*}
\p\left( T > s+t\; | \; T > s \right) = \p\left( T > t\right).
\end{equation*}

Thus we can conclude that the two processes are basically the same. Furthermore, we know that $Y\left(t\right) - \log N\left(t\right)$ follows the standard Gumbel distribution for large $t$ (see, for example, Gumbel~\cite{gumbel} or the excellent survey by Clusel and Bertin~\cite{clusel2008global}):
\begin{equation*}
\lim\limits_{t\to\infty} \p\left(Y\left(t\right) - \log N\left(t\right) < x \right) = e^{-e^{-x}},
\end{equation*}
and therefore since $\log N\left(t\right) = ct - \log c$, it follows that
\begin{equation*}
\lim\limits_{t\to\infty} \p\left(Y\left(t\right) - ct < x \right) = e^{-e^{-\left(x + \log c \right)}}.
\end{equation*}
Thus by the connection we have
\begin{equation*}
\lim\limits_{t\to\infty} \p\left(X\left(t\right) - ct < x \right) = e^{-e^{-\left(x + \log c \right)}},
\end{equation*}
and since the distribution of $X\left(t\right) - ct$ is centered, it follows that
\begin{equation*}
c = e^{-\gamma},
\end{equation*}
where $\gamma = 0.5772\dots$ is Euler's constant.


The idea above extends to the general case when $k = 1/ \beta$ is a fixed positive integer, with an appropriate definition of $Y\left( t \right)$. In the following we use $k$ and $1 / \beta$ interchangeably.

We define $Y\left( t \right)$ as follows. Once again, as time goes forward, let us take gradually more and more new independent and identically distributed Exp(1) random variables. Specifically, at time $t$ let there be $N\left( t \right) = e^{\beta c t} / \beta c$ random variables altogether. Denote by $Y_{k}\left( t \right)$ the $k^{\text{th}}$ largest value among these random variables at time $t$, and define $Y\left( t \right) := k Y_{k} \left( t \right)$. With this definition, the rest of the calculation is done in a similar way to the standard Gumbel case. For a more detailed calculation, see~\cite[Section 5.2]{balazs2011modeling}.

Thus we have rediscovered the results of the previous subsection through extreme value theory arguments.

\section{Fluid limit}\label{sec:fluid_limit}

In this section we prove Theorem~\ref{thm:main}. The section is organized as follows. In order to present and prove our results, we need to introduce notation, terminology, definitions and previously known results. This necessary background is introduced in Section~\ref{ch:Preliminaries}. Then in Section~\ref{ch:Results} we formulate as separate theorems several intermediate steps that lead to the main result and we prove Theorem~\ref{thm:main} using these steps. In the following subsections we then provide proofs of the intermediate steps stated in Section~\ref{ch:Results}.


\subsection{Preliminaries}\label{ch:Preliminaries}

Recall that our goal is to show that---given some assumptions---the path of the empirical measure process $\left\{\mu_{n} \left( \cdot \right) \right\}_{n \geq 1}$ \emph{converges weakly} in the Skorohod space $D\left( [0,\infty), \mathcal{P}_{1} \left( \mathbb{R} \right) \right)$ to $\mu\left( \cdot \right)$, which solves the mean field equation~\eqref{eq:Atf}. The goal of this subsection is to provide all the necessary preliminaries that we need during the proof.

We present important results from the theory of weak convergence that we use later; in particular, we focus on tightness results in Skorohod spaces. For an excellent introduction to weak convergence, we refer the reader to Billingsley~\cite{billingsley1999convergence}. Many results that we use are taken from Ethier and Kurtz~\cite{ethier1986markov}, and Jacod and Shiryaev~\cite{jacod1987limit} is also a valuable source of information concerning convergence of processes.

As in Theorem~\ref{thm:main}, from now on we assume that the jump rate function $w$ is bounded, and we denote its lowest upper bound by $a$. To put it another way: $a:= \lim_{x \to -\infty} w\left( x \right) = \sup_x w\left( x \right)$. We also assume that $w$ is differentiable and has a bounded derivative. Furthermore, in the following $Z$ always denotes a random variable from the jump length distribution. As before, we assume that $\E \left( Z \right) = 1$ and that $Z$ has a finite third moment.


\subsubsection{Probability metrics}\label{ch:ProbMetrics}

Recall the definition~\eqref{eq:was_metric} of the 1-Wasserstein metric. The following lemma (from Feng and Kurtz~\cite[Appendix D]{feng2006large}) summarizes what we need to know about the metric space $\left( \mathcal{P}_{1} \left( \mathbb{R} \right), d_{1} \right)$. More can be found in e.g.\ Feng and Kurtz~\cite[Appendix D]{feng2006large}.
\begin{lemma}\label{lem:was_metric}
$\left( \mathcal{P}_{1} \left( \mathbb{R} \right), d_{1} \right)$ is a complete and separable metric space. For $\mu_{n}, \mu \in \mathcal{P}_{1} \left( \mathbb{R} \right)$, the following are equivalent:
\begin{enumerate}[(1)]
\item $\lim_{n \to \infty} d_{1} \left( \mu_{n}, \mu \right) = 0$.
\item For all continuous functions $\varphi$ on $\mathbb{R}$ satisfying $\left| \varphi \left( x \right) \right| \leq C \left( 1 + \left|x \right| \right)$ for some $C > 0$, 
\begin{equation*}
\lim\limits_{n \to \infty} \left\langle \varphi, \mu_{n} \right\rangle = \left\langle \varphi, \mu \right\rangle.
\end{equation*}
\end{enumerate}
\end{lemma}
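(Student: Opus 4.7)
My plan is to base everything on the Kantorovich--Rubinstein dual representation
\begin{equation*}
d_1(\mu,\nu) = \sup\left\{ \langle f,\mu\rangle - \langle f,\nu\rangle : f\colon \mathbb{R}\to\mathbb{R} \text{ is 1-Lipschitz}\right\},
\end{equation*}
which turns the optimal-transport definition into a supremum over Lipschitz test functions. Armed with this, the proof of the equivalence splits cleanly into two halves, and the metric-space properties will follow from a tightness argument.

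For (1)$\Rightarrow$(2): using the dual formula, every $1$-Lipschitz $f$ with $f(0)=0$ (hence $|f(x)|\le |x|$) satisfies $|\langle f,\mu_n\rangle-\langle f,\mu\rangle|\le d_1(\mu_n,\mu)\to 0$. Plugging in $f(x)=|x|$ and $f(x)=-|x|$ shows $\langle |x|,\mu_n\rangle\to\langle |x|,\mu\rangle$, i.e.\ first absolute moments converge. Since bounded Lipschitz functions form a separating class for weak convergence on $\mathbb{R}$, we also get $\mu_n\Rightarrow\mu$. Now for a continuous $\varphi$ with $|\varphi(x)|\le C(1+|x|)$, pick $R$ large and a continuous cutoff $\chi_R$ equal to $1$ on $[-R,R]$ and vanishing outside $[-R-1,R+1]$. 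The bounded continuous function $\varphi\chi_R$ converges under $\mu_n$ by weak convergence; the tail $\int \varphi(1-\chi_R)\,d\mu_n$ is bounded by $C\int_{|x|>R}(1+|x|)\,d\mu_n(x)$, which is uniformly small in $n$ by convergence of first moments (so $(1+|x|)$ is uniformly integrable under $\{\mu_n\}$). Letting $R\to\infty$ after $n\to\infty$ yields (2).

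For (2)$\Rightarrow$(1): applying (2) to bounded continuous $\varphi$ gives weak convergence $\mu_n\Rightarrow\mu$, and applying (2) to $\varphi(x)=|x|$ gives $\langle|x|,\mu_n\rangle\to\langle|x|,\mu\rangle$. By Skorohod's representation, realize $X_n\sim\mu_n$, $X\sim\mu$ on a common probability space with $X_n\to X$ a.s. Then $|X_n-X|\le |X_n|+|X|$, and $\E|X_n|\to\E|X|$ together with $|X_n|\to |X|$ a.s.\ gives uniform integrability of $\{|X_n-X|\}$, so $\E|X_n-X|\to 0$ by Vitali's theorem. Since $(X_n,X)$ is a coupling of $\mu_n$ and $\mu$, this bounds $d_1(\mu_n,\mu)$ and finishes the equivalence.

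For the metric properties: separability follows by approximating $\mu\in\mathcal{P}_1(\mathbb{R})$ first by a compactly supported truncation (error controlled via the finite first moment) and then by finitely supported measures with rational weights on rational atoms (error controlled by $d_1$ directly). For completeness, a $d_1$-Cauchy sequence $\{\mu_n\}$ has uniformly bounded first moments (since $d_1$ controls $|\langle|x|,\mu_n\rangle-\langle|x|,\mu_m\rangle|$), hence is tight; any weak limit $\mu$ has finite first moment by Fatou, and the equivalence just proved upgrades the weak subsequential convergence to $d_1$-convergence, which together with the Cauchy property forces the whole sequence to converge in $d_1$. The one subtle step here — and the main obstacle throughout — is the uniform integrability of the linear tail $|x|$ under $\{\mu_n\}$; all the nontrivial work is in extracting it cleanly from first-moment convergence (in the equivalence) or from the $d_1$-Cauchy property (in completeness).
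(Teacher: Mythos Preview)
Your proposal is correct and gives a self-contained argument via Kantorovich--Rubinstein duality, Skorohod representation, and a uniform-integrability/tightness step. The paper, however, does not prove this lemma at all: it is stated as a quotation from Feng and Kurtz~\cite[Appendix D]{feng2006large} and used as a black box throughout Section~\ref{sec:fluid_limit}. So there is no ``paper's own proof'' to compare against --- you have supplied strictly more than the authors do.

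One minor remark on your completeness paragraph: to invoke the equivalence (2)$\Rightarrow$(1) on a weakly convergent subsequence you need not just bounded first moments but uniform integrability of $|x|$ under $\{\mu_n\}$. You correctly flag this as the subtle step; concretely, it follows from the $d_1$-Cauchy property by testing against the $1$-Lipschitz functions $f_R(x)=(|x|-R)_+$, which control the tails $\int_{|x|>2R}|x|\,d\mu_n$ uniformly in $n$. With that made explicit, your sketch is complete.
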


We introduce another probability metric that we use later on.
For $P, Q \in \mathcal{P}_{1} \left( \mathbb{R} \right)$ and a set $\mathcal{H}$ of functions define
\begin{equation}\label{eq:prob_metric}
d_{\mathcal{H}}\left( P, Q \right) = \sup\limits_{f \in \mathcal{H}} \left| \left\langle f, P \right\rangle - \left\langle f, Q \right\rangle \right|.
\end{equation}
It is immediate that $d_{\mathcal{H}}$ is a pseudometric on $\mathcal{P}_{1}\left( \mathbb{R} \right)$: $d_{\mathcal{H}}\left( P, P \right) = 0$, $d_{\mathcal{H}} \left( P, Q \right) = d_{\mathcal{H}} \left( Q, P \right)$, and the triangle inequality also holds. The only thing that does not necessarily hold is that $d_{\mathcal{H}} \left( P, Q \right) = 0$ implies $P = Q$. In order for this to hold, $\mathcal{H}$ has to be a large enough set of functions to be able to distinguish between different probability measures. There are various choices of $\mathcal{H}$ that give rise to notable metrics. For instance, if $\mathcal{H}$ is the set of indicator functions of measurable sets, then $d_{\mathcal{H}}$ is the total variation metric. If $\mathcal{H}$ is the set of half-line indicator functions then $d_{\mathcal{H}}$ is the Kolmogorov (uniform) metric. The choice of $\mathcal{H} = \left\{ f \in C_{b}:\ \left| f \right| \leq 1 \right\}$ gives the Radon metric (that this is a metric follows from e.g.\ Billingsley~\cite[Theorem 1.2.]{billingsley1999convergence}). Consequently the choice $\mathcal{H} = H$ (see~\eqref{eq:H_def}) also yields a metric. This is what we use in the following: $d_H$ denotes the metric on $\mathcal{P}_{1} \left( \mathbb{R} \right)$ given by~\eqref{eq:prob_metric} with $\mathcal{H} = H$. This metric is used in Theorem~\ref{thm:cont_dep_init_cond}, Corollary~\ref{cor:uniqueness} and their proofs in Section~\ref{ch:Uniqueness}. For more on probability metrics we refer the reader to Gibbs and Su~\cite{gibbs2002choosing} for an excellent review.


\subsubsection{Skorohod spaces}

Let $\left( E, r \right)$ be a general metric space. Denote by $C\left( [0,\infty), E \right)$ the space of continuous $E$-valued paths with the uniform topology. The \emph{Skorohod space} $D\left( [0,\infty), E \right)$ consists of $E$-valued paths that are right-continuous and have left limits. We do not go into details about the topology on $D\left( [0,\infty), E \right)$, but we note that there exists a metric under which $D\left( [0,\infty), E \right)$ is a complete and separable metric space if $\left( E, r \right)$ is complete and separable (Ethier and Kurtz~\cite[Chapter 3]{ethier1986markov}). The topology generated by this metric is called the Skorohod topology on $D\left( [0,\infty), E \right)$. Note that $C\left( [0,\infty), E \right)$ is a subset of $D\left( [0,\infty), E \right)$ and it can be seen that the Skorohod topology relativized to $C\left( [0,\infty), E \right)$ coincides with the uniform topology there. For more on the Skorohod space we refer the reader to Billingsley~\cite[Chapter 3]{billingsley1999convergence} and Ethier and Kurtz~\cite[Chapter 3]{ethier1986markov}.

The reason we introduced the Skorohod space in general is because the natural space in which to consider $\mu_{n} \left( \cdot \right)$ is $D\left( [0,\infty), \mathcal{P}_{1} \left( \mathbb{R} \right) \right)$: we know that $\mu_{n} \left( \cdot \right)$ is a piece-wise constant function in $\mathcal{P}_{1} \left( \mathbb{R} \right)$, having jumps whenever a particle jumps. (We can define $\mu_{n} \left( \cdot \right)$ to be right-continuous at its discontinuities.) Since $\mathcal{P}_{1} \left( \mathbb{R} \right)$ is complete and separable under the Wasserstein metric, $D\left( [0,\infty), \mathcal{P}_{1} \left( \mathbb{R} \right) \right)$ is also complete and separable under an appropriate metric. Similarly, for any test function $f$ (for instance $f \in H$), the natural space in which to consider $\left\langle f, \mu_{n} \left( \cdot \right) \right\rangle$ is the Skorohod space $D\left( [0,\infty), \mathbb{R} \right)$.


\subsubsection{Tightness in Skorohod spaces}

Recall our plan of action from the end of Section~\ref{sec:results}:
\begin{itemize}
 \item Prove tightness of $\left\{ \mu_{n}\left( \cdot \right) \right\}_{n\geq 1}$ in the Skorohod space $D\left( [0,\infty), \mathcal{P}_{1}\left( \mathbb{R} \right) \right)$.
 \item Identify the limit of $\left\{ \mu_{n}\left( \cdot \right) \right\}_{n\geq 1}$: here we essentially have to show that the limit solves the deterministic mean field equation.
 \item Prove uniqueness of the solution to the mean field equation with a given initial condition.
\end{itemize}
In view of the first point, our goal here is to find checkable sufficient conditions for tightness in Skorohod spaces. Since all metric spaces we deal with are complete and separable, by Prohorov's theorem the concepts of tightness and relative compactness are equivalent in our setting, and therefore we use these two notions interchangeably. Following Perkins~\cite{perkins1999dawson}, we say that a sequence is $C$-relatively compact if it is relatively compact and all weak limit points are a.s.\ continuous.


\bigskip\noindent\textbf{Tightness in $D\left( [0, \infty), \mathbb{R} \right)$}

\bigskip\noindent First, we state conditions for tightness in $D\left( [0, \infty), \mathbb{R} \right)$. For $y\left( \cdot \right) \in  D\left( [0, \infty), \mathbb{R} \right)$ define
\begin{equation}\label{eq:biggest_jump_until_T}
J_{T} \left( y \left( \cdot \right) \right) = \sup\limits_{0 \leq t \leq T} \left| y\left( t \right) - y\left( t- \right) \right|,
\end{equation}
the largest (absolute) jump in $y\left( \cdot \right)$ until $T$. Let $\left\{ Y_{n} \left( \cdot \right) \right\}_{n\geq 1}$ be a sequence of random variables in $D\left( [0, \infty), \mathbb{R} \right)$. The following theorem, which is a combination of Billingsley~\cite[Theorem 16.8.]{billingsley1999convergence} and the corollary thereafter, characterizes tightness in $D\left( [0, \infty), \mathbb{R} \right)$.
\begin{theorem}\label{thm:tightness_thm_billingsley}
The sequence $\left\{ Y_{n} \left( \cdot \right) \right\}_{n\geq 1}$ is tight in $D\left( [0, \infty), \mathbb{R} \right)$ if and only if the following three conditions hold:
\begin{enumerate}[(i)]
\item 
\begin{equation}\label{eq:B1}
\lim\limits_{L \to \infty} \limsup\limits_{n \to \infty} \p \left( \left| Y_{n}\left( 0 \right) \right| \geq L \right) = 0.
\end{equation}
\item For each $T \geq 0$,
\begin{equation}\label{eq:B2}
\lim\limits_{L \to \infty} \limsup\limits_{n \to \infty} \p \left( J_{T} \left( Y_{n} \left( \cdot \right) \right) \geq L \right) = 0.
\end{equation}
\item For each $T \geq 0$ and $\eps > 0$,
\begin{equation}\label{eq:B3}
\lim\limits_{\delta \to 0} \limsup\limits_{n \to \infty} \p \left( \inf\limits_{\left\{t_{i}\right\}} \max\limits_{1 \leq i \leq v} \sup\limits_{s,t \in [t_{i-1},t_{i})} \left| Y_{n} \left( t \right) - Y_{n} \left( s \right) \right| \geq \eps \right) = 0,
\end{equation}
where the infimum extends over all decompositions $[t_{i-1},t_{i}), 1\leq i \leq v$, of $[0,T)$ such that $t_{i} - t_{i-1} > \delta$ for $1 \leq i < v$.
\end{enumerate}
\end{theorem}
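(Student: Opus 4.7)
The plan is to deduce this from the Skorohod--Arzelà--Ascoli compactness criterion for $D([0,T],\mathbb{R})$: a set $A \subseteq D([0,T],\mathbb{R})$ is relatively compact if and only if $\sup_{y \in A}\sup_{t \leq T} |y(t)| < \infty$ and $\lim_{\delta \to 0}\sup_{y \in A} \omega'_T(y,\delta) = 0$, where $\omega'_T(y,\delta) := \inf_{\{t_i\}}\max_i \sup_{s,t \in [t_{i-1},t_i)} |y(t)-y(s)|$ (the infimum over grids with $t_i - t_{i-1} > \delta$) is precisely the càdlàg modulus appearing in~\eqref{eq:B3}. Since $D([0,\infty),\mathbb{R})$ is Polish, Prohorov's theorem reduces tightness of $\{Y_n\}$ to producing, for every $\eps > 0$, a relatively compact set $K_\eps$ with $\p(Y_n \in K_\eps) \geq 1 - \eps$ uniformly in $n$; and, conversely, forces any such $K_\eps$ to witness the three conditions.

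For the sufficiency direction, I would fix $T$ and $\eps > 0$ and combine the three hypotheses via a union bound. Using (i), pick $L_0$ with $\limsup_n \p(|Y_n(0)| > L_0) \leq \eps/4$. Using (ii), pick $L_1$ with $\limsup_n \p(J_T(Y_n) > L_1) \leq \eps/4$. Using (iii) iteratively, pick $\delta_k \downarrow 0$ with $\limsup_n \p(\omega'_T(Y_n,\delta_k) > 1/k) \leq \eps\, 2^{-k-2}$. Define
\[
K_{\eps,T} := \bigl\{y \in D([0,T],\mathbb{R}) : |y(0)| \leq L_0,\ J_T(y) \leq L_1,\ \omega'_T(y,\delta_k) \leq 1/k \text{ for all } k \geq 1\bigr\}.
\]
A short inductive argument along an almost-infimising grid for $\omega'_T(\cdot,\delta_1)$ shows that $\sup_{t \leq T}|y(t)|$ is bounded by a constant depending only on $T,\delta_1,L_0,L_1$ on $K_{\eps,T}$, so both hypotheses of the compactness criterion hold and $K_{\eps,T}$ is relatively compact in $D([0,T],\mathbb{R})$. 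A union bound gives $\liminf_n \p(Y_n \in K_{\eps,T}) \geq 1-\eps/2$ for each $T$, and a standard diagonal argument across $T \in \mathbb{N}$ assembles these into a relatively compact set in $D([0,\infty),\mathbb{R})$ of probability at least $1-\eps$.

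For the necessity direction, assume tightness, fix $\eps > 0$, and let $K \subseteq D([0,\infty),\mathbb{R})$ be compact with $\p(Y_n \in K) \geq 1-\eps$ for all $n$. Continuity of $y \mapsto y(0)$ (using right-continuity at $0$) makes $\{y(0) : y \in K\}$ a bounded subset of $\mathbb{R}$, yielding~\eqref{eq:B1}. Restricting $K$ to $D([0,T],\mathbb{R})$ and invoking the compactness criterion directly gives $\lim_{\delta \to 0}\sup_{y \in K}\omega'_T(y,\delta) = 0$, which is~\eqref{eq:B3}. Finally, $J_T$ is upper semicontinuous on $D([0,T],\mathbb{R})$, so it is bounded above on the compact set $K|_{[0,T]}$, which yields~\eqref{eq:B2}.

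The principal obstacle, and the only genuinely nontrivial input, is the Skorohod--Arzelà--Ascoli characterization of compactness in the $J_1$ topology via the modulus $\omega'_T$ (Billingsley~\cite{billingsley1999convergence}, Theorem~12.3), in particular the fact that one can replace the usual uniform modulus of continuity by $\omega'_T$ when working with càdlàg paths. Once this criterion is accepted, the rest of the argument is a routine union bound, a diagonal extraction across compact time intervals, and the standard continuity and upper-semicontinuity statements for the evaluation map and the jump functional.
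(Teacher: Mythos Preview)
The paper does not give its own proof of this theorem: it simply quotes it as ``a combination of Billingsley~\cite[Theorem 16.8.]{billingsley1999convergence} and the corollary thereafter'' and moves on. So there is no in-paper argument to compare against; the statement is used as an imported black box.

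Your proposal is essentially a reconstruction of Billingsley's own proof strategy, and it is correct in outline. The key ingredients---Prohorov's theorem on the Polish space $D([0,\infty),\mathbb{R})$, the Skorohod--Arzel\`a--Ascoli characterization of relative compactness via the c\`adl\`ag modulus $\omega'_T$, and the elementary bound $\sup_{t\le T}|y(t)| \le |y(0)| + C(T,\delta_1)\bigl(J_T(y) + \omega'_T(y,\delta_1)\bigr)$ obtained by walking along an almost-minimising grid---are exactly what Billingsley uses (the last of these is the content of the ``corollary thereafter'' that the paper cites). The union bound / diagonal extraction for sufficiency and the semicontinuity observations for necessity are routine. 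One small remark: for necessity you don't actually need upper semicontinuity of $J_T$; since the restriction of a compact $K$ to $[0,T]$ is compact in $D([0,T],\mathbb{R})$, the Arzel\`a--Ascoli criterion already gives $\sup_{y\in K}\sup_{t\le T}|y(t)|<\infty$, and $J_T(y)$ is trivially bounded by twice this quantity.
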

We now look at a condition which implies that any limit point is continuous, i.e.\ is in $C\left( [0,\infty), \mathbb{R} \right)$. Following Ethier and Kurtz~\cite[p. 147]{ethier1986markov}, for $y\left( \cdot \right) \in  D\left( [0, \infty), \mathbb{R} \right)$ define
\begin{equation}\label{eq:jump_def}
J\left( y\left( \cdot \right) \right) = \int\limits_{0}^{\infty} e^{-u} \min\left\{ J_{u}\left( y\left( \cdot \right) \right), 1 \right\} du,
\end{equation}
where $J_{u}\left( y\left( \cdot \right) \right)$ is defined as in~\eqref{eq:biggest_jump_until_T}. The following theorem characterizes continuity of a weak limit point (Ethier and Kurtz~\cite[Theorem 3.10.2.]{ethier1986markov}).
\begin{theorem}\label{thm:weak_limit_cont}
Suppose $Y_{n} \left( \cdot \right) \Rightarrow_{n} Y \left( \cdot \right)$ in $D\left( [0, \infty), \mathbb{R} \right)$. Then $Y\left( \cdot \right)$ is almost surely continuous if and only if $J\left( Y_{n} \left( \cdot \right) \right) \Rightarrow_{n} 0$.
\end{theorem}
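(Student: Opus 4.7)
The plan is to prove both implications by reducing the statement to two analytic properties of the functional $J: D([0,\infty), \mathbb{R}) \to [0,1]$, viewed as a deterministic map on path space: first, that $J$ is continuous in the Skorohod topology at every $y \in C([0,\infty), \mathbb{R})$; second, that $J$ is lower semi-continuous on all of $D([0,\infty), \mathbb{R})$. The truncation $\min\{\cdot, 1\}$ and the integrable weight $e^{-u}$ in~\eqref{eq:jump_def} are built in precisely so that $J$ is bounded (hence convergence in distribution to $0$ is equivalent to convergence in probability) and so that dominated convergence and Fatou applied to the defining integral make sense.

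For the forward direction, suppose $Y(\cdot)$ is almost surely continuous, so $J(Y(\cdot)) = 0$ almost surely. The continuity of $J$ at each $y \in C$ is not hard: Skorohod convergence $y_n \to y$ with $y \in C$ automatically upgrades to uniform convergence on compact time intervals, which forces $J_u(y_n) \to 0$ for every $u \geq 0$, and then dominated convergence in~\eqref{eq:jump_def} yields $J(y_n) \to 0 = J(y)$. Therefore the continuous mapping theorem applied to $J$ (continuous on the support $C$ of the limit $Y$) gives $J(Y_n(\cdot)) \Rightarrow_n J(Y(\cdot)) \equiv 0$, and a distributional limit that is a constant is automatically a limit in probability.

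For the converse, assume $J(Y_n(\cdot)) \Rightarrow_n 0$, equivalently $J(Y_n(\cdot)) \to 0$ in probability. Since $D([0,\infty), \mathbb{R})$ is Polish in the Skorohod topology, Skorohod's representation theorem provides a common probability space carrying versions $\tilde Y_n(\cdot) \stackrel{d}{=} Y_n(\cdot)$ and $\tilde Y(\cdot) \stackrel{d}{=} Y(\cdot)$ with $\tilde Y_n(\cdot) \to \tilde Y(\cdot)$ almost surely in the Skorohod topology. Equality in distribution gives $J(\tilde Y_n(\cdot)) \to 0$ in probability, so along an almost sure subsequence $J(\tilde Y_{n_k}(\cdot)) \to 0$ almost surely. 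Lower semi-continuity of $J$ at $\tilde Y(\cdot)$ then forces $J(\tilde Y(\cdot)) \leq \liminf_k J(\tilde Y_{n_k}(\cdot)) = 0$ almost surely, so $\tilde Y(\cdot)$---and therefore $Y(\cdot)$---has no jumps and is almost surely continuous.

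The main obstacle is the lower semi-continuity of $J$ on $D([0,\infty), \mathbb{R})$. Using the $J_1$ characterization of Skorohod convergence via time changes $\lambda_n \to \mathrm{id}$ uniformly on compacts together with $y_n \circ \lambda_n \to y$ uniformly on compacts, one has to show that any jump of $y$ of size $h$ at a time $t$ forces $y_n$, for all sufficiently large $n$, to have a jump of size arbitrarily close to $h$ at some time arbitrarily close to $t$. This implies $J_u(y_n) \geq J_u(y) - o(1)$ for every $u > t$, and applying Fatou's lemma to the weighted integral in~\eqref{eq:jump_def} yields $J(y) \leq \liminf_n J(y_n)$, which is the semi-continuity we need.
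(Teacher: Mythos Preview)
The paper does not give its own proof of this statement: it is quoted as Theorem~3.10.2 of Ethier and Kurtz~\cite{ethier1986markov}, and the authors use it as a black box in the proof of Theorem~\ref{thm:GK_tightness}. So there is no ``paper's proof'' to compare against.

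That said, your argument is sound and is essentially the standard route. A couple of remarks on points you left a bit terse. For the lower semi-continuity of $J$, the crucial fact you are implicitly invoking is that if $z_n \to y$ \emph{uniformly} on $[0,T]$ with $z_n, y$ cadlag, then $z_n(t-) \to y(t-)$ as well (immediate from the uniform bound), so $z_n(t) - z_n(t-) \to y(t) - y(t-)$; applying this with $z_n = y_n \circ \lambda_n$ and using that the time change $\lambda_n$ is a strictly increasing homeomorphism gives that the jump of $y$ at $t$ is matched by a jump of $y_n$ at $\lambda_n(t) \to t$. From this, $\liminf_n J_u(y_n) \geq J_u(y)$ for each fixed $u$ (use that for cadlag $y$ on a compact interval there are only finitely many jumps exceeding any fixed threshold, so $J_u(y)$ is actually attained at some $t \leq u$). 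Since $x \mapsto \min\{x,1\}$ is continuous and nondecreasing, $\liminf_n \min\{J_u(y_n),1\} \geq \min\{J_u(y),1\}$, and Fatou applied to~\eqref{eq:jump_def} gives $J(y) \leq \liminf_n J(y_n)$, exactly as you claim. In the converse direction you pass to a subsequence along which $J(\tilde Y_{n_k}(\cdot)) \to 0$ almost surely; note that $\tilde Y_{n_k}(\cdot) \to \tilde Y(\cdot)$ a.s.\ in the Skorohod topology is inherited from the full sequence, so the lower semi-continuity bound applies on a set of full probability. Finally, your phrase ``continuous on the support $C$ of the limit $Y$'' is slightly loose---$C([0,\infty),\mathbb{R})$ is a set of $Y$-probability one rather than the topological support---but this is exactly what the continuous mapping theorem needs.
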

We now give sufficient conditions for a sequence to be $C$-relatively compact.
\begin{theorem}\label{thm:GK_tightness}
Suppose the following two conditions hold.
\begin{enumerate}[(i)]
\item 
\begin{equation}\label{eq:GK1}
\lim\limits_{L \to \infty} \limsup\limits_{n \to \infty} \p \left( \left| Y_{n}\left( 0 \right) \right| \geq L \right) = 0.
\end{equation}
\item For each $T \geq 0$ and $\eps > 0$,
\begin{equation}\label{eq:GK2}
\lim\limits_{\delta \to 0} \limsup\limits_{n \to \infty} \p \left( \sup\limits_{\substack{0 < t - s < \delta \\ 0 \leq s \leq t \leq T}} \left| Y_{n} \left( t \right) - Y_{n} \left( s \right) \right| \geq \eps \right) = 0.
\end{equation}
\end{enumerate}
Then $\left\{ Y_{n} \left( \cdot \right) \right\}_{n\geq 1}$ is $C$-relatively compact in $D\left( [0, \infty), \mathbb{R} \right)$.
\end{theorem}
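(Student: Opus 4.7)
The plan is to verify the three conditions of Theorem~\ref{thm:tightness_thm_billingsley} to obtain tightness of $\{Y_n(\cdot)\}_{n \geq 1}$ in $D([0,\infty), \mathbb{R})$, and then to invoke Theorem~\ref{thm:weak_limit_cont} to upgrade tightness to $C$-relative compactness by ruling out discontinuous weak limits.

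Condition~\eqref{eq:B1} is identical to our hypothesis~\eqref{eq:GK1}. The remaining two conditions will follow from~\eqref{eq:GK2}, which is strictly stronger than both. For~\eqref{eq:B3}, I would partition $[0,T)$ into consecutive subintervals of length $2\delta$ (truncating the last if necessary); these satisfy $t_i - t_{i-1} > \delta$ as required, and on each subinterval any two points lie at distance less than $2\delta$, so the maximum oscillation is bounded above by $\sup_{0 < t-s < 2\delta,\, 0 \leq s \leq t \leq T} |Y_n(t) - Y_n(s)|$, to which~\eqref{eq:GK2} (with $2\delta$ in the role of $\delta$) applies. For~\eqref{eq:B2}, each jump $|Y_n(t) - Y_n(t-)|$ is a limit of $|Y_n(t) - Y_n(s)|$ as $s \uparrow t$, so $J_T(Y_n(\cdot))$ is dominated by the same modulus for every $\delta > 0$; consequently~\eqref{eq:GK2} in fact yields $J_T(Y_n(\cdot)) \to 0$ in probability, which is much stronger than~\eqref{eq:B2}.

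To exclude discontinuous weak limits, note that by Prohorov's theorem any subsequence of $\{Y_n(\cdot)\}$ admits a weakly convergent sub-subsequence; applying Theorem~\ref{thm:weak_limit_cont} to each such limit, it suffices to show $J(Y_n(\cdot)) \Rightarrow_n 0$ with $J$ defined as in~\eqref{eq:jump_def}. The previous step already gives $J_u(Y_n(\cdot)) \to 0$ in probability for every $u \geq 0$, hence $\min\{J_u(Y_n(\cdot)), 1\} \to 0$ in probability; boundedness by $1$ upgrades this to convergence in $L^1$. A further application of dominated convergence against the integrable weight $e^{-u}$ then gives $\E[J(Y_n(\cdot))] \to 0$, so $J(Y_n(\cdot)) \to 0$ in probability, completing the verification.

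The proof is mostly bookkeeping; the key point to notice is that the uniform modulus hypothesis~\eqref{eq:GK2} simultaneously implies the weaker Skorohod-type modulus condition~\eqref{eq:B3} (after harmlessly rescaling $\delta$ by a factor of two) and dominates all jumps, and that continuity of subsequential weak limits follows by a double dominated convergence argument. No serious obstacle is expected.
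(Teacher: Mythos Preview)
Your proposal is correct and follows essentially the same route as the paper: verify the three conditions of Theorem~\ref{thm:tightness_thm_billingsley} (with~\eqref{eq:B3} bounded by the uniform modulus over windows of width $2\delta$, and~\eqref{eq:B2} a consequence of $J_T(Y_n(\cdot)) \to 0$ in probability), then apply Theorem~\ref{thm:weak_limit_cont}. The only cosmetic difference is in the last step: the paper shows $J(Y_n(\cdot)) \Rightarrow_n 0$ via the direct bound $J(Y_n(\cdot)) \le T\,J_T(Y_n(\cdot)) + e^{-T}$ for suitably large $T$, whereas you use a two-layer dominated convergence argument; both are equally valid.
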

This theorem is stated in Grigorescu and Kang~\cite[Section 3]{grigorescu2008steady}, but without a proof, and so for completeness we provide a short proof.
\begin{proof}
Let us check tightness first: according to Theorem~\ref{thm:tightness_thm_billingsley} we need to check that~\eqref{eq:B1},~\eqref{eq:B2}, and~\eqref{eq:B3} hold. Our first condition is exactly~\eqref{eq:B1}. From~\eqref{eq:GK2} it follows that for every $T > 0$, $J_{T} \left( Y_{n} \left( \cdot \right) \right) \Rightarrow_{n} 0$, and consequently~\eqref{eq:B2} holds. To show that~\eqref{eq:B3} follows from~\eqref{eq:GK2}, note that
\begin{equation*}
\inf\limits_{\left\{t_{i}\right\}} \max\limits_{1 \leq i \leq v} \sup\limits_{s,t \in [t_{i-1},t_{i})} \left| Y_{n} \left( t \right) - Y_{n} \left( s \right) \right| \leq \sup\limits_{\substack{0 < t - s < 2 \delta \\ 0 \leq s \leq t \leq T}} \left| Y_{n} \left( t \right) - Y_{n} \left( s \right) \right|,
\end{equation*}
where the infimum extends over the same range as in Theorem~\ref{thm:tightness_thm_billingsley}.

Now let us check that any weak limit point is a.s.\ continuous. According to Theorem~\ref{thm:weak_limit_cont}, we need to check that $J\left( Y_{n} \left( \cdot \right) \right) \Rightarrow_{n} 0$. From~\eqref{eq:GK2} we know that for every $T > 0$, $J_{T} \left( Y_{n} \left( \cdot \right) \right) \Rightarrow_{n} 0$. For any fixed $\eps > 0$ there exists a $T$ such that $e^{-T} < \eps / 2$. From~\eqref{eq:jump_def} it follows that $J\left( Y_{n} \left( \cdot \right) \right) \leq T J_{T} \left( Y_{n} \left( \cdot \right) \right) + e^{-T}$, and consequently:
\begin{equation*}
\lim\limits_{n \to \infty} \p \left( J\left( Y_{n} \left( \cdot \right) \right) > \eps \right) \leq \lim\limits_{n \to \infty} \p \left( J_{T}\left( Y_{n} \left( \cdot \right) \right) > \eps / 2T \right) = 0. \qedhere
\end{equation*}
\end{proof}


\noindent\textbf{Tightness in Skorohod spaces containing measure-valued paths}

\bigskip\noindent Now let us turn to conditions that imply tightness in the Skorohod space of time-indexed measure-valued paths $D\left( [0,\infty), \mathcal{P}_{1} \left( \mathbb{R} \right) \right)$. Our main tool is a slight modification of a theorem of Perkins~\cite[Theorem II.4.1]{perkins1999dawson}. Perkins' theorem basically states that if we can check an extra condition, then $C$-relative compactness in $D\left( [0,\infty), M_{F} \left( E \right) \right)$ (where $M_F \left( E \right)$ is the space of finite measures on the metric space $E$, with the topology of weak convergence) can be checked by checking the $C$-relative compactness of appropriate integrals in $D\left( [0,\infty), \mathbb{R} \right)$.

Our modification of Perkins' theorem is the following.
\begin{theorem}\label{thm:perkins_mod}
A sequence of cadlag $\mathcal{P}_1 \left( \mathbb{R} \right)$-valued processes $\left\{ P_{n} \left( \cdot \right) \right\}_{n\geq 1}$ is $C$-relatively compact in\linebreak[4] $D\left( [0, \infty), \mathcal{P}_1 \left( \mathbb{R} \right) \right)$ if the following conditions hold:
\begin{enumerate}[(i)]
\item For every $T > 0$ there exists a $c = c \left( T \right)$ and a $\tau = \tau \left( T \right) \in \left(0,1\right)$ such that for every $\eps > 0$ there exists a $K = K_{T,\eps} \in \mathbb{R}$ such that $K \leq c / \eps^{1 - \tau}$ and
\begin{equation*}
\sup\limits_{n} \p \left( \sup\limits_{0\leq t \leq T} P_{n} \left( t, (-\infty, -K) \cup (K, \infty) \right) > \eps \right) < \eps.
\end{equation*}
\item For every $f \in C_{b}$, $\left\{ \left\langle f, P_{n} \left( \cdot \right) \right\rangle \right\}_{n \geq 1}$ is $C$-relatively compact in $D\left( [0,\infty), \mathbb{R} \right)$.
\end{enumerate}
\end{theorem}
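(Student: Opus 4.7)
The plan is to derive the theorem from Perkins' original Theorem II.4.1 of~\cite{perkins1999dawson}, which asserts $C$-relative compactness in $D([0,\infty), M_F(\mathbb{R}))$ (the Skorohod space over finite measures equipped with the topology of weak convergence) under a \emph{qualitative} compact-containment condition together with condition (ii). First I would verify that Perkins' hypotheses hold for $\{P_n(\cdot)\}$: our quantitative condition (i) is strictly stronger than the qualitative compact-containment condition required by Perkins (it delivers a bound on the decay rate of $K = K_{T,\eps}$ in $\eps$, which is not needed at this stage), and condition (ii) is identical to Perkins' scalar hypothesis. This already yields $C$-relative compactness in $D([0,\infty), M_F(\mathbb{R}))$, so any subsequence has a further subsequence $\{P_{n_k}(\cdot)\}$ converging weakly to a limit $P(\cdot)$ whose paths are a.s.\ continuous in the weak topology. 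The remaining task is to upgrade the convergence to the strictly finer Skorohod topology built on $(\mathcal{P}_1(\mathbb{R}), d_1)$, and to upgrade continuity of limit paths to $d_1$-continuity.

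The bridge is uniform integrability of $|x|$ with respect to the random measures along paths. By Lemma~\ref{lem:was_metric}, for $\mu_n, \mu \in \mathcal{P}_1(\mathbb{R})$ one has $d_1(\mu_n,\mu) \to 0$ if and only if $\mu_n \to \mu$ weakly and $\int|x|\,\mu_n(dx) \to \int|x|\,\mu(dx)$. I would therefore use condition (i) to extract, for every $T$,
\begin{equation*}
\lim_{K \to \infty} \sup_n \E\Bigl[\sup_{t \leq T} \int_{|x|>K} |x|\, P_n(t, dx)\Bigr] = 0.
\end{equation*}
Setting $M_{n,K} := \sup_{t \leq T} P_n\bigl(t, (-\infty,-K) \cup (K,\infty)\bigr)$ and using $M_{n,K} \leq 1$, condition (i) applied with $\eps = (c/K)^{1/(1-\tau)}$ gives $\sup_n \E[M_{n,K}] \leq 2(c/K)^{1/(1-\tau)}$. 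The layer-cake identity $\int_{|x|>K}|x|\,d\mu = K\mu(|x|>K) + \int_K^\infty \mu(|x|>L)\,dL$ combined with this polynomial bound (integrated in $L$) yields $\sup_n \E\bigl[\sup_{t \leq T} \int_{|x|>K}|x|\,P_n(t,dx)\bigr] = O\bigl(K^{-\tau/(1-\tau)}\bigr)$, and this decays to zero precisely because the exponent $\tau \in (0,1)$ is strictly positive.

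With this uniform integrability in hand, the topology upgrade proceeds by standard arguments. Applying Skorohod's representation on the level of $D([0,\infty), M_F(\mathbb{R}))$ realizes the weak convergence almost surely; at each continuity point $t$ of $P(\cdot)$, weak convergence $P_{n_k}(t) \to P(t)$ combined with the uniform integrability of $|x|$ yields $d_1(P_{n_k}(t), P(t)) \to 0$, and analogously $t \mapsto P(t)$ inherits $d_1$-continuity from $M_F$-weak continuity. The main obstacle I expect lies in the final step, namely transporting the Skorohod time-change that witnesses path convergence in $D([0,\infty), M_F(\mathbb{R}))$ into a time-change that works in the strictly finer space $D([0,\infty), \mathcal{P}_1(\mathbb{R}))$: one has to show that, uniformly in $n$ on an event of probability arbitrarily close to one, the $d_1$-modulus of continuity of $P_n(\cdot)$ is controlled on compact time intervals. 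The uniform tail bound from condition (i) together with the tightness of $\langle\Id, P_n(\cdot)\rangle$ already provided by (ii) (since $\Id \in H$) are exactly the ingredients needed to carry this through, but the careful bookkeeping between the two Skorohod topologies is the delicate part of the argument.
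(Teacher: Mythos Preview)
Your approach differs from the paper's and contains one error and one unfilled gap. The error: condition (ii) is stated for $f \in C_b$, and $\Id \notin C_b$; you cannot invoke (ii) for $\Id$ (the set $H$ plays no role in the statement of Theorem~\ref{thm:perkins_mod}). The gap: you correctly flag the ``delicate part'' --- transporting Skorohod convergence from $D([0,\infty), M_F(\mathbb{R}))$ to the strictly finer space $D([0,\infty), \mathcal{P}_1(\mathbb{R}))$ --- but do not carry it out. Uniform-in-$t$ weak convergence together with uniform integrability does ultimately yield uniform-in-$t$ $d_1$-convergence, but this step requires a compactness argument on the family of $1$-Lipschitz test functions restricted to a tight range; it is not automatic from the per-$f$ convergence delivered by (ii), and the ingredient you list to close it is the erroneous one.

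The paper avoids this detour entirely by never passing through $M_F(\mathbb{R})$: it verifies the two conditions of the Polish-space characterization (Theorem~\ref{thm:general_C-rel_comp}) directly with $E = \mathcal{P}_1(\mathbb{R})$. For compact containment it uses essentially your layer-cake estimate, but repackaged: choosing $\eps_m = \tilde c\,\eps\, m^{-2/\tau}$ and the associated $K_m \leq c\,\eps_m^{-(1-\tau)}$, it builds $C^0 = \{\mu: \mu(|x|>K_m) \leq m^{-2/\tau}\ \forall m\}$ and checks $\sup_{\mu\in C^0}\int_{|x|>N}|x|\,\mu(dx) \to 0$ so that $\overline{C^0}$ is compact in $\mathcal{P}_1(\mathbb{R})$. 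For the modulus-of-continuity condition it first upgrades (ii) to $C$-relative compactness of $\{h\circ P_n(\cdot)\}$ for every $h \in C_b(\mathcal{P}_1(\mathbb{R}))$ via Stone--Weierstrass on the algebra of polynomials in $\langle f,\mu\rangle$, and then controls the $d_1$-modulus by covering $\overline{C^0}$ with finitely many balls and taking $h_i(\mu) = \min\{d_1(\mu_i,\mu),1\}$. This sidesteps the uniform-in-$f$ difficulty because only finitely many $h_i$ are needed.
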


If one looks at the proof of Perkins' original theorem~\cite[Theorem II.4.1]{perkins1999dawson}, then one can see that it holds true if one replaces $M_F \left( E \right)$ with $M_1 \left( E \right)$, the space of probability measures on $E$ with the topology of weak convergence. However, our measure-valued process lives in a different space, namely $\mu_n \left( \cdot \right) \in D\left( [0,\infty), \mathcal{P}_1 \left( \mathbb{R} \right) \right)$, where the Wasserstein space $\mathcal{P}_1 \left( \mathbb{R} \right)$ is endowed with the metric $d_1$. It turns out that we cannot simply replace $M_F \left( E \right)$ with $\mathcal{P}_1 \left( \mathbb{R} \right)$ in the theorem. Convergence in $\left( \mathcal{P}_{1} \left( \mathbb{R} \right), d_1 \right)$ is stronger than weak convergence: it is weak convergence and the convergence of the first absolute moment of the measure. It is this that causes the original proof of the theorem to break down.

The proof of Theorem~\ref{thm:perkins_mod} is essentially the same as that of Perkins' theorem, but with a slight modification at one point, which requires that our condition (i) is stronger than in Perkins' original version. For completeness, we reproduce the whole proof, following the lines of Perkins~\cite[Section II.4.]{perkins1999dawson}: this can be found in Section~\ref{ch:Perkins}.

We use Theorem~\ref{thm:perkins_mod} in order to prove tightness of the empirical measure process $\left\{ \mu_{n} \left( \cdot \right) \right\}_{n\geq 1}$ in Corollary~\ref{cor:tightness_P1(R)}.


\subsubsection{A few more facts that are used}\label{ch:More_facts}

Before we move on to our results, let us finish the preliminaries with a few more things that we use later in the proofs (in particular in Section~\ref{ch:LimitSolvesMF}). We rely on the continuous mapping theorem (Billingsley~\cite[Section 2]{billingsley1999convergence}). We do not need the theorem in full generality, only the following:
\begin{theorem}
Suppose $h$ is a continuous function from the metric space $\left( S, d \right)$ to the metric space $\left( S', d' \right)$. If $X_{n} \Rightarrow_{n} X$ in $\left( S, d \right)$, then $h \left( X_{n} \right) \Rightarrow_{n} h\left( X \right)$ in $\left( S', d' \right)$.
\end{theorem}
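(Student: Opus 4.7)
The plan is to reduce this to the definition of weak convergence by composing test functions with $h$. Recall that $X_n \Rightarrow_n X$ in $(S,d)$ means, by definition, that $\E\!\left[ \varphi(X_n) \right] \to \E\!\left[ \varphi(X) \right]$ for every bounded continuous $\varphi : S \to \mathbb{R}$. Our goal is to establish the analogous convergence for the pushed-forward laws on $S'$, i.e.\ $\E\!\left[ g(h(X_n)) \right] \to \E\!\left[ g(h(X)) \right]$ for every bounded continuous $g : S' \to \mathbb{R}$.

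The key observation is that if $g : S' \to \mathbb{R}$ is bounded and continuous, then the composition $f := g \circ h : S \to \mathbb{R}$ inherits both properties: boundedness is immediate since $\sup_{x \in S} |f(x)| \leq \sup_{y \in S'} |g(y)| < \infty$, and continuity follows because the composition of continuous maps between metric spaces is continuous. Thus $f$ is a legitimate test function for the weak convergence of $X_n$ to $X$ on $(S,d)$.

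Given any bounded continuous $g$ on $S'$, I would therefore form $f = g \circ h$, apply the hypothesis $X_n \Rightarrow_n X$ to this $f$, and obtain
\begin{equation*}
\E\!\left[ g(h(X_n)) \right] = \E\!\left[ f(X_n) \right] \;\longrightarrow\; \E\!\left[ f(X) \right] = \E\!\left[ g(h(X)) \right].
\end{equation*}
Since $g$ was arbitrary bounded continuous on $S'$, this gives $h(X_n) \Rightarrow_n h(X)$ in $(S', d')$ by definition of weak convergence.

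There is essentially no obstacle here, as the result is a direct consequence of the definition together with the fact that composition preserves continuity; no regularity theorem (Portmanteau, Skorohod representation, tightness, etc.) is needed because $h$ is assumed continuous on all of $S$. The only subtlety worth mentioning is that the argument does not require completeness or separability of either metric space, so the statement holds in the generality given. (The more refined versions of the continuous mapping theorem, where $h$ is allowed to be discontinuous on a set of $X$-measure zero, do require extra work via the Portmanteau theorem, but that generality is not claimed here.)
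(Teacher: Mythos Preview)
Your proof is correct and is the standard elementary argument for the continuous mapping theorem when $h$ is continuous everywhere. The paper does not actually prove this statement; it merely quotes it as a background fact from Billingsley~\cite[Section 2]{billingsley1999convergence}, so there is no ``paper's own proof'' to compare against. Your argument is precisely the one Billingsley gives for the everywhere-continuous case.
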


Finally, we need the following two lemmas, which can be found as exercises in Ethier and Kurtz~\cite[Chapter~3]{ethier1986markov}.
\begin{lemma}\label{lem:EKex1}
Let $E$ and $F$ be metric spaces, and let $f:\ E \to F$ be continuous. Then the mapping $x \to f \circ x$ from $D \left( [0, \infty), E \right)$ to $D \left( [0, \infty), F \right)$ is continuous.
\end{lemma}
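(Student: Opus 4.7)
The plan is to reduce this to the standard time-change characterization of Skorohod convergence and then combine it with a compactness argument. Recall that $x_n \to x$ in $D([0,\infty), E)$ is equivalent to the existence of continuous strictly increasing time changes $\lambda_n$ of $[0,\infty)$ fixing $0$ such that $\lambda_n$ converges to the identity uniformly on bounded time intervals, and $x_n \circ \lambda_n \to x$ uniformly on $[0,T]$ for every $T$ that is a continuity point of $x$ (see Ethier and Kurtz~\cite[Chapter~3]{ethier1986markov}). The goal is to show that the \emph{same} sequence of time changes witnesses $f \circ x_n \to f \circ x$ in $D([0,\infty), F)$; since $(f \circ x_n) \circ \lambda_n = f \circ (x_n \circ \lambda_n)$, it suffices to show that $f \circ (x_n \circ \lambda_n)$ converges to $f \circ x$ uniformly on $[0,T]$ for each continuity point $T$ of $x$.

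The key auxiliary step is to prove that for each cadlag path $x:[0,\infty) \to E$ and each $T\geq 0$, the set $R_T := \{x(t): 0 \leq t \leq T\} \cup \{x(t-): 0 < t \leq T\}$ has compact closure in $E$. This follows from the standard observation that for every $\epsilon > 0$ only finitely many jumps of $x$ on $[0,T]$ have size at least $\epsilon$, and in between these jump times $x$ is within $\epsilon$ of a finite collection of reference points; hence $R_T$ is totally bounded, and compactness follows from completeness of $E$, which is the standing assumption on state spaces in the Skorohod setting. Continuity of $f$ on the compact set $\overline{R_T}$ then upgrades, by a finite open cover argument, to the following uniform statement: for every $\epsilon > 0$ there exists $\delta > 0$ such that $p \in R_T$ and $q \in E$ with $d_E(p,q) < \delta$ imply $d_F(f(p), f(q)) < \epsilon$.

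Given this modulus and a continuity point $T$ of $x$, I would argue as follows. By the Skorohod characterization, $\sup_{t \leq T} d_E(x_n(\lambda_n(t)), x(t)) \to 0$, so for $n$ large this supremum falls below $\delta$; since $x(t) \in R_T$ for all $t \in [0,T]$, the uniform statement above yields $\sup_{t \leq T} d_F(f(x_n(\lambda_n(t))), f(x(t))) < \epsilon$. As continuity points of $x$ are dense in $[0,\infty)$, this gives local uniform convergence $f \circ x_n \circ \lambda_n \to f \circ x$, and combined with $\lambda_n \to \mathrm{id}$ locally uniformly this is exactly the criterion for $f \circ x_n \to f \circ x$ in $D([0,\infty), F)$.

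The one step I expect to be a little delicate is upgrading pointwise continuity of $f$ to the uniform modulus on a neighborhood of $R_T$; this is where both compactness of $\overline{R_T}$ and the cadlag structure of $x$ are essential. Beyond that, the argument is a routine unpacking of the definition of the Skorohod topology, and no specific properties of the state spaces other than completeness are used.
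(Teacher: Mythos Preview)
The paper does not actually prove this lemma; it merely cites it as an exercise from Ethier and Kurtz~\cite[Chapter~3]{ethier1986markov}. Your argument via the time-change characterization and uniform continuity of $f$ on the compact range is the standard one and is correct.

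One small sharpening: you invoke completeness of $E$ to pass from total boundedness of $R_T$ to compactness of $\overline{R_T}$, but this is not needed. For any cadlag $x:[0,T]\to E$ into an arbitrary metric space, the set $R_T=\{x(t):t\in[0,T]\}\cup\{x(t-):t\in(0,T]\}$ is already compact. Indeed, given any sequence $(p_n)\subset R_T$ with associated times $t_n$, pass to a subsequence with $t_n\to t^*$; right continuity and existence of left limits force $p_n\to x(t^*)$ or $p_n\to x(t^*-)$, both of which lie in $R_T$. This removes the completeness hypothesis and matches the lemma as stated (for general metric spaces). With $R_T$ compact, your finite-cover argument giving the uniform modulus for $f$ on a neighborhood of $R_T$ goes through exactly as written.
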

\begin{lemma}\label{lem:EKex2}
The function
\begin{equation*}
f\left( x \right) \left(\cdot \right) = \int\limits_{0}^{\cdot} x\left( s \right) ds
\end{equation*}
from $D \left( [0, \infty), \mathbb{R} \right)$ to $D \left( [0, \infty), \mathbb{R} \right)$ is continuous.
\end{lemma}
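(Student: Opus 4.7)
The plan is to reduce Skorohod convergence in the image to local uniform convergence, exploiting the following observation: for any $x \in D([0,\infty), \mathbb{R})$, the indefinite integral $F(x)(t) := \int_0^t x(s)\,ds$ is absolutely continuous in $t$ (since $x$ is cadlag and hence locally bounded), so $F(x) \in C([0,\infty), \mathbb{R}) \subset D([0,\infty), \mathbb{R})$. Because the Skorohod topology on $D([0,\infty), \mathbb{R})$, when restricted to sequences whose limit is continuous, coincides with the topology of uniform convergence on compact intervals, it suffices to prove that whenever $x_n \to x$ in $D([0,\infty), \mathbb{R})$, one has $\sup_{t \in [0,T]} |F(x_n)(t) - F(x)(t)| \to 0$ for every $T > 0$.

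For fixed $T > 0$ the elementary estimate
\begin{equation*}
\sup_{t \in [0,T]} \left| F(x_n)(t) - F(x)(t) \right| \;\leq\; \int_0^T \left| x_n(s) - x(s) \right| ds
\end{equation*}
reduces the task to showing that $\int_0^T |x_n - x|\,ds \to 0$. I would obtain this by dominated convergence, using two standard consequences of Skorohod convergence $x_n \to x$ in $D([0,\infty), \mathbb{R})$:
\begin{inparaenum}[(i)]
\item $x_n(s) \to x(s)$ at every point $s$ that is a continuity point of $x$, and hence for Lebesgue-almost every $s \in [0,T]$, since a cadlag path has at most countably many discontinuities;
\item the uniform local bound $\sup_n \sup_{s \in [0,T]} |x_n(s)| < \infty$.
\end{inparaenum}
Both follow by unpacking the definition of Skorohod convergence: one chooses homeomorphisms $\lambda_n$ of $[0,\infty)$ with $\lambda_n \to \mathrm{id}$ and $x_n \circ \lambda_n \to x$ locally uniformly, and transfers the estimates through the time changes (for (i), use $x_n(t) = x_n(\lambda_n(\lambda_n^{-1}(t)))$ and continuity of $x$ at $t$; for (ii), dominate $\sup_{s \leq T} |x_n(s)|$ by $\sup_{u \leq T'} |x_n(\lambda_n(u))|$ for some $T' > T$ with $\lambda_n(T') > T$ eventually). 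Combining (i) and (ii) with the dominated convergence theorem yields the required $L^1([0,T])$ convergence.

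The proof is not deep; the only point requiring a small amount of care is the verification of (i) and (ii), which are the only places where the specific structure of the Skorohod topology enters. Once these are in hand, the argument collapses to a one-line application of dominated convergence followed by the trivial reduction from uniform-on-compacts to Skorohod convergence afforded by continuity of the limit $F(x)$.
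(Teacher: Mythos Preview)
Your argument is correct. The paper itself does not supply a proof of this lemma: it merely records the statement and cites it as an exercise in Ethier and Kurtz~\cite[Chapter~3]{ethier1986markov}, so there is no ``paper's own proof'' to compare against. Your approach---observing that $F(x)$ is continuous so that Skorohod convergence of $F(x_n)$ to $F(x)$ reduces to locally uniform convergence, bounding the uniform error by $\int_0^T |x_n - x|$, and then invoking dominated convergence via the almost-everywhere pointwise convergence and uniform local boundedness of Skorohod-convergent sequences---is exactly the standard route one would take for this exercise, and each step is justified.
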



\subsection{Main steps of the fluid limit procedure}\label{ch:Results}

In this subsection we collect the intermediate steps that build up the proof of the main theorem, Theorem~\ref{thm:main}, leaving the proofs of these results to later subsections. We complete the proof of Theorem~\ref{thm:main} at the end of the subsection. Recall our assumptions on the jump rate function $w$ and the jump length $Z$ which we assume throughout this section.

Our first results deal with showing the tightness of $\left\{ \mu_{n} \left( \cdot \right) \right\}_{n \geq 1}$ in the Skorohod space $D\left( [0,\infty), \mathcal{P}_{1}\left( \mathbb{R} \right) \right)$. 
\begin{theorem}\label{thm:tightness_R}
Suppose Assumption 1 from Section~\ref{sec:results} holds. Then for any $f \in C_b$ the sequence $\left\{ \left\langle f, \mu_{n}\left( \cdot \right) \right\rangle \right\}_{n \geq 1}$ is $C$-relatively compact in $D\left( [0,\infty), \mathbb{R} \right)$.
\end{theorem}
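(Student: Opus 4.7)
The plan is to verify the two hypotheses of Theorem~\ref{thm:GK_tightness} for the real-valued process $Y_n(t) := \langle f, \mu_n(t)\rangle$ on $D([0,\infty), \mathbb{R})$. Set $M := \sup_x |f(x)|$, which is finite since $f \in C_b$. Condition~\eqref{eq:GK1} is immediate: one has $|Y_n(t)| \leq M$ deterministically for every $n$ and $t$, so $\p(|Y_n(0)| \geq L) = 0$ as soon as $L > M$. (For $f \in C_b$ Assumption~1 is not logically required at this step.) The substantive work is the modulus-of-continuity condition~\eqref{eq:GK2}.

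Two quantitative observations drive the argument. First, since only one particle jumps at a time and $f$ is bounded, each jump of the empirical average satisfies $|Y_n(t) - Y_n(t-)| \leq 2M/n$. Second, boundedness of the jump rate function, $w \leq a$ with $a := \sup_x w(x) < \infty$, forces the total jump rate $\sum_i w(x_i(t) - m_n(t))$ to be at most $na$ regardless of the configuration, so by a standard thinning/coupling construction the total number $N_n([s,t])$ of system-wide jumps in a time interval $[s,t]$ is stochastically dominated by a $\mathrm{Poisson}(na(t-s))$ random variable. Combining these two observations,
\begin{equation*}
|Y_n(t) - Y_n(s)| \leq \frac{2M}{n}\, N_n([s,t]).
\end{equation*}

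To convert the supremum over the continuous range $\{(s,t): 0 \leq s \leq t \leq T,\, t-s < \delta\}$ into a finite maximum, I would discretize $[0,T]$ into $K = \lceil T/\delta \rceil$ intervals $I_k = [k\delta,(k+1)\delta]$. Any admissible pair $(s,t)$ lies in the union of at most two consecutive $I_k$'s, so
\begin{equation*}
\sup_{\substack{0 \leq s \leq t \leq T \\ t - s < \delta}} |Y_n(t) - Y_n(s)| \;\leq\; \frac{4M}{n}\, \max_{0 \leq k < K} N_n(I_k).
\end{equation*}
A union bound then gives $\p(\mathrm{sup} \geq \eps) \leq K\, \p\bigl(\mathrm{Poisson}(na\delta) \geq n\eps/(4M)\bigr)$. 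Choosing $\delta$ small enough that $a\delta < \eps/(8M)$, the Poisson mean sits well below the threshold, and a standard Chernoff-type tail bound yields exponential-in-$n$ decay of this probability, so the $\limsup_{n \to \infty}$ is zero for every such $\delta$; sending $\delta \to 0$ then completes the verification.

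The only mildly delicate point is the coupling that produces the homogeneous $\mathrm{Poisson}(na)$ dominating process for the total number of jumps, but this is entirely standard for jump processes with a uniform bound on the total rate. Otherwise the argument is routine: the boundedness of $f$ gives (i) for free and bounds each jump by $O(1/n)$, while the boundedness of $w$ bounds the rate of jumps, and the two together beat any fixed $\eps$ on any fixed small window.
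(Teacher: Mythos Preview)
Your proof is correct and follows essentially the same strategy as the paper: bound each jump's effect on $\langle f,\mu_n\rangle$ by $2M/n$ using boundedness of $f$, dominate the jump activity by a rate-$na$ Poisson process using $w\le a$, discretize $[0,T]$ into windows of width $\delta$, and apply a tail bound. The only cosmetic differences are that the paper counts the number of \emph{particles that have jumped} (via the per-particle coupling $\tilde{x}_i$) rather than the total number of jumps, and closes with a second-moment Markov inequality instead of a Chernoff bound; your version is arguably a bit cleaner.
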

We remark that although we only need the above result for continuous and bounded functions in the corollary below, continuity is not used in our proof, and with a slight modification in the proof the same can be proven for Lipschitz continuous functions.
\begin{corollary}\label{cor:tightness_P1(R)}
Suppose Assumptions 1 and 2 from Section~\ref{sec:results} hold. Then $\left\{ \mu_{n} \left( \cdot \right) \right\}_{n \geq 1}$ is $C$-relatively compact in $D\left( [0,\infty), \mathcal{P}_{1}\left( \mathbb{R} \right) \right)$.
\end{corollary}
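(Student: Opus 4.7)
The plan is to apply Theorem~\ref{thm:perkins_mod} to $\{\mu_n(\cdot)\}_{n\geq 1}$. Condition (ii) of that theorem is, for each $f \in C_b$, exactly Theorem~\ref{thm:tightness_R}, so the real work is to verify the compact-containment condition (i) using Assumptions 1 and 2.

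The decisive structural fact is that particles only move to the right: since $Z \geq 0$, each trajectory $t \mapsto x_i(t)$ is non-decreasing. Hence $\mathbf{1}[x_i(t) < -K]$ is non-increasing in $t$ and $\mathbf{1}[x_i(t) > K]$ is non-decreasing in $t$, which yields the deterministic bound
\[
\sup_{0\leq t\leq T} \mu_n\bigl(t,(-\infty,-K)\cup(K,\infty)\bigr) \leq \mu_n\bigl(0,(-\infty,-K)\bigr) + \mu_n\bigl(T,(K,\infty)\bigr).
\]
So the problem reduces to two static estimates: one at time $0$ and one at the deterministic time $T$.

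The term $\mu_n(0,(-\infty,-K))$ is dominated by $\mu_n(0,\{|x|>K\})$ and is therefore controlled directly by Assumption 2 as soon as $K \geq \hat c\,\eps^{-2/3}$. For the second term, I will couple each particle to a dominating compound Poisson process: since $w \leq a$, a thinning argument yields $x_i(T) \leq x_i(0) + \tilde S_i(T)$, where $\tilde S_i(T) = \sum_{j=1}^{\tilde N_i(T)} Z_{i,j}$ with $\tilde N_i(T) \sim \mathrm{Poisson}(aT)$ and $Z_{i,j}$ i.i.d.\ copies of $Z$. Using
\[
\mathbf{1}[x_i(T)>K] \leq \mathbf{1}[x_i(0)>K/2] + \mathbf{1}[\tilde S_i(T)>K/2],
\]
the $x_i(0)$ piece is handled by Assumption 2 at scale $K/2$, while the $\tilde S_i$ piece is handled by Markov applied to the empirical mean and then by a cubic Markov bound on each summand:
\[
\p\Bigl(\tfrac1n\sum_i \mathbf{1}[\tilde S_i(T)>K/2] > \eps/4\Bigr) \leq \frac{4}{\eps}\,\p\bigl(\tilde S_1(T)>K/2\bigr) \leq \frac{32\,\E[\tilde S_1(T)^3]}{\eps\,K^3}.
\]
The standard compound-Poisson formula gives $\E[\tilde S_1(T)^3] \leq aT\,\E Z^3 + 3(aT)^2\,\E Z^2 + (aT)^3 < \infty$. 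Choosing $K = K_{T,\eps} = c(T)\,\eps^{-2/3}$ with $c(T)$ sufficiently large makes every summand strictly smaller than $\eps/3$, which verifies condition (i) of Theorem~\ref{thm:perkins_mod} with $\tau = 1/3$.

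The main obstacle, and the reason Assumption 2 is tailored with the exponent $2/3$, is to match the permitted scale $K \lesssim \eps^{-(1-\tau)}$ in Theorem~\ref{thm:perkins_mod} with the actual decay of the right tail. A mere second-moment bound (e.g.\ Chebyshev via Wald's identity) would force $K \sim \eps^{-1}$, which is inadmissible. The hypothesis $\E Z^3 < \infty$ is used exactly once, to validate the cubic Markov estimate above; as noted in the remark after Assumption 2, weakening it to $\E Z^{2+\epsilon} < \infty$ would shift $\tau$ to $\epsilon/(2+\epsilon)$ but leave the overall scheme unchanged.
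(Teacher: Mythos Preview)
Your proposal is correct and follows essentially the same route as the paper's proof: apply Theorem~\ref{thm:perkins_mod}, invoke Theorem~\ref{thm:tightness_R} for condition~(ii), and for condition~(i) exploit monotonicity of the trajectories to reduce to time $0$ and time $T$, couple with the dominating rate-$a$ compound Poisson process $\tilde x_i$, and use a double Markov inequality (empirical mean, then cubic) together with Assumption~2, taking $\tau=1/3$ and $K\sim\eps^{-2/3}$. The paper carries the constants through a bit more explicitly (writing $K_{T,\eps}=c(T)^{1/3}\eps^{-2/3}$ with $c(T)=\max\{3\tilde c(T),8\hat c^{3}\}$), but the argument is the same.
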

Theorem~\ref{thm:tightness_R} and Corollary~\ref{cor:tightness_P1(R)} are proved in Section~\ref{ch:Tightness}.

We remark that tightness can be proved in other ways as well, using the techniques of Ethier and Kurtz~\cite[Chapter 3]{ethier1986markov}. In particular, Thomas G. Kurtz communicated to us a different way of checking tightness, and we have included a sketch of his proof at the end of Section~\ref{ch:Tightness}.

Our next group of results is concerned with identifying the limit of the sequence $\left\{ \mu_{n} \left( \cdot \right) \right\}_{n \geq 1}$. In particular, our goal is to show that any weak limit point of $\left\{ \mu_{n} \left( \cdot \right) \right\}_{n \geq 1}$ solves the mean field equation. We prove this in two steps. Recall the definition~\eqref{eq:H_def} of $H$.
\begin{theorem}\label{thm:error_vanishes}
For every $t \geq 0$ and every $f \in H$,
\begin{equation}\label{eq:conv_to_0_inprob}
\sup\limits_{0\leq s \leq t} \left| A_{s,f} \left( \mu_{n} \left( \cdot \right) \right) \right| \xrightarrow[n\to \infty]{\mathbb{P}} 0
\end{equation}
where $\xrightarrow[n\to \infty]{\mathbb{P}} 0$ denotes convergence in probability.
\end{theorem}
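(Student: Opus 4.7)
The plan is to recognize $A_{t,f}\left(\mu_n\left(\cdot\right)\right)$ as a martingale and then bound its variance using the structure of the jumps. For fixed $f \in H$, define $M_n^f\left(t\right) := A_{t,f}\left(\mu_n\left(\cdot\right)\right)$. By Dynkin's formula applied to the Markov jump process with generator $L$, the process $M_n^f\left(\cdot\right)$ is a (cadlag) martingale starting at $M_n^f\left(0\right)=0$. For $f \in C_b$ with $\lvert f \rvert \leq 1$, this is standard, because both $\langle f, \mu_n\left(\cdot\right)\rangle$ and $L\langle f, \mu_n\left(\cdot\right)\rangle$ are uniformly bounded (using $w \leq a$). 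For $f = \Id$, the drift satisfies $\bigl\lvert L\langle \Id, \mu_n\left(s\right)\rangle\bigr\rvert = \bigl\lvert \tfrac{1}{n}\sum_i w\left(x_i\left(s\right) - m_n\left(s\right)\right) \E Z \bigr\rvert \leq a$, so $\E\lvert m_n\left(t\right)\rvert \leq \E\lvert m_n\left(0\right)\rvert + at$ for each fixed $n$ (which is finite since there are only finitely many particles), and Dynkin's formula still applies.

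The second step is to compute the predictable quadratic variation of $M_n^f$. Since $\mu_n\left(\cdot\right)$ jumps precisely when some particle $i$ jumps (at rate $w\left(x_i - m_n\right)$ with random jump length $Z$), and such a jump increments $\langle f, \mu_n\rangle$ by $\tfrac{1}{n}\bigl(f\left(x_i+Z\right) - f\left(x_i\right)\bigr)$, the standard compensator formula for pure-jump martingales gives
\begin{equation*}
\langle M_n^f\rangle\left(t\right) = \frac{1}{n^2}\int_0^t \sum_{i=1}^n w\left(x_i\left(s\right) - m_n\left(s\right)\right)\, \E_Z\!\left[\bigl(f\left(x_i\left(s\right)+Z\right) - f\left(x_i\left(s\right)\right)\bigr)^2\right] ds.
\end{equation*}
Using $w \leq a$, for $f \in C_b$ with $\lvert f\rvert \leq 1$ one has $\bigl(f\left(x+Z\right)-f\left(x\right)\bigr)^2 \leq 4$, yielding $\E\langle M_n^f\rangle\left(t\right) \leq 4at/n$. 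For $f = \Id$ the bracket equals $\E Z^2$, which is finite by the third moment assumption, giving $\E\langle M_n^{\Id}\rangle\left(t\right) \leq a\,\E Z^2 \cdot t/n$. In both cases $\E\langle M_n^f\rangle\left(t\right) \leq C\left(t,f\right)/n$.

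The third step is to combine this with Doob's $L^2$ maximal inequality. Since $M_n^f$ is a square-integrable martingale with $M_n^f\left(0\right)=0$,
\begin{equation*}
\E\!\left[\sup_{0\leq s\leq t}\lvert M_n^f\left(s\right)\rvert^2\right] \leq 4\,\E\!\left[M_n^f\left(t\right)^2\right] = 4\,\E\langle M_n^f\rangle\left(t\right) \leq \frac{4C\left(t,f\right)}{n}.
\end{equation*}
An application of Markov's inequality then gives, for every $\eps > 0$,
\begin{equation*}
\p\!\left(\sup_{0\leq s\leq t}\lvert A_{s,f}\left(\mu_n\left(\cdot\right)\right)\rvert \geq \eps\right) \leq \frac{4C\left(t,f\right)}{n\,\eps^2} \xrightarrow[n\to\infty]{} 0,
\end{equation*}
which is exactly~\eqref{eq:conv_to_0_inprob}.

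The only delicate point is the $f = \Id$ case, since $\Id$ is unbounded and the boundedness assumptions on $f$ that usually justify the martingale framework fail. The reason everything still works is that the jump of $M_n^{\Id}$ at a single transition has size $Z/n$, so its variance is controlled by $\E Z^2$ rather than by any uniform bound on $\Id$; this is precisely where the assumption of a finite second (here even third) moment of $Z$, together with $w \leq a$, enters. No use of Assumptions 1, 2 or 3 on the initial condition is needed for this step.
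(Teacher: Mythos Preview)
Your proposal is correct and follows essentially the same approach as the paper: recognize $A_{t,f}(\mu_n(\cdot))$ as a martingale, bound its predictable quadratic variation by $O(1/n)$ using $w\le a$ and the jump-size bounds ($4$ for bounded $f$, $\E Z^2$ for $f=\Id$), and conclude via Doob's $L^2$ maximal inequality. The paper phrases the quadratic-variation computation as evaluating $LM_n^2$ directly rather than invoking the compensator formula, but the resulting expression and the bounds $4at/n$ and $a\,\E(Z^2)t/n$ are identical.
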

\begin{theorem}\label{thm:weak_conv_of_eq}
If $\mu_{n} \left( \cdot \right) \Rightarrow_{n} \mu\left( \cdot \right)$ in $D\left( [0,\infty), \mathcal{P}_{1}\left( \mathbb{R} \right) \right)$, then for every $t\geq 0$ and every $f \in H$,
\begin{equation}\label{eq:conv_to_mf}
A_{t,f} \left( \mu_{n} \left( \cdot \right) \right) \Rightarrow_{n} A_{t,f} \left( \mu\left( \cdot \right) \right)
\end{equation}
in $\mathbb{R}$.
\end{theorem}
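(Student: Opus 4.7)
The plan is to apply the continuous mapping theorem to the functional $\Psi_{t,f}: \nu(\cdot) \mapsto A_{t,f}(\nu(\cdot))$ viewed as a map $D([0,\infty), \mathcal{P}_1(\mathbb{R})) \to \mathbb{R}$. By Corollary~\ref{cor:tightness_P1(R)} the weak limit $\mu(\cdot)$ lies almost surely in $C([0,\infty), \mathcal{P}_1(\mathbb{R}))$, so it is enough to prove that $\Psi_{t,f}$ is continuous at every such continuous path. Fix $\mu(\cdot) \in C([0,\infty), \mathcal{P}_1(\mathbb{R}))$ and suppose $\mu_n(\cdot) \to \mu(\cdot)$ in the Skorohod topology; continuity of the limit upgrades this to uniform convergence on compacts, i.e.\ $\sup_{0 \leq s \leq T} d_1(\mu_n(s), \mu(s)) \to 0$ for every $T > 0$.

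The boundary terms are easy: each $f \in H$ satisfies $|f(x)| \leq C(1+|x|)$ (with $C = \|f\|_\infty$ in the bounded case and $C = 1$ for $f = \Id$), so Lemma~\ref{lem:was_metric} yields $\langle f, \mu_n(s) \rangle \to \langle f, \mu(s) \rangle$ at $s = 0$ and $s = t$. For the time integral, set
\begin{equation*}
g_f(x, m) := \bigl( \E(f(x+Z)) - f(x) \bigr)\, w(x - m).
\end{equation*}
The apparently worrisome unbounded case $f = \Id$ collapses: $\E(\Id(x+Z)) - \Id(x) = \E(Z) = 1$, so $g_{\Id}(x,m) = w(x-m)$, bounded by $a$. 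For $|f| \leq 1$ the function $g_f$ is bounded by $2a$. In both cases $g_f$ is jointly continuous in $(x, m)$ (using dominated convergence for $\E(f(x+Z))$ together with continuity of $w$) and satisfies $|g_f(x, m_1) - g_f(x, m_2)| \leq 2 \|w'\|_\infty |m_1 - m_2|$ uniformly in $x$, by the bounded-derivative hypothesis on $w$.

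For pointwise convergence of the integrand, note first that the Kantorovich--Rubinstein duality applied to the $1$-Lipschitz function $\Id$ gives $|m_{\mu_n}(s) - m_\mu(s)| \leq d_1(\mu_n(s), \mu(s)) \to 0$ uniformly in $s \in [0,t]$. The triangle inequality
\begin{equation*}
\bigl| \langle g_f(\cdot, m_{\mu_n}(s)), \mu_n(s) \rangle - \langle g_f(\cdot, m_\mu(s)), \mu(s) \rangle \bigr| \leq 2 \|w'\|_\infty \bigl| m_{\mu_n}(s) - m_\mu(s) \bigr| + \bigl| \langle g_f(\cdot, m_\mu(s)), \mu_n(s) - \mu(s) \rangle \bigr|
\end{equation*}
then shows the integrand converges to zero pointwise in $s$: the first summand by the moment estimate, the second because $g_f(\cdot, m_\mu(s))$ is bounded continuous and $d_1$-convergence implies weak convergence. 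Since the integrand is uniformly bounded by $2a$, dominated convergence delivers convergence of the time integrals on $[0,t]$, which together with the boundary-term step yields $\Psi_{t,f}(\mu_n(\cdot)) \to \Psi_{t,f}(\mu(\cdot))$.

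The main obstacle is precisely the inclusion of $\Id$ in $H$: naive weak-convergence machinery fails because $\Id$ is unbounded. This is resolved in two ways that are tailored to the setup: the identity $\E(Z) = 1$ collapses the inner bracket in $g_f$ to a constant in the $f = \Id$ case (making the integrand bounded), and the Wasserstein metric $d_1$ (via Lemma~\ref{lem:was_metric}) is designed precisely to control linear test functions in the boundary terms. Everything else is essentially uniform-on-compacts convergence coupled with dominated convergence.
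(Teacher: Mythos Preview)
Your proof is correct and follows essentially the same strategy as the paper: both show the functional $A_{t,f}$ is continuous at continuous paths (using Corollary~\ref{cor:tightness_P1(R)}), handle the boundary terms via Lemma~\ref{lem:was_metric}, and treat the integral term by the same triangle-inequality split into a Lipschitz-in-$m$ piece (bounded by $\|w'\|_\infty|m_n-m|$) and a weak-convergence piece (using that $g_f(\cdot,m)$ is bounded continuous, with the key collapse $g_{\Id}=w$). The only minor technical difference is that the paper packages the integral convergence through the abstract Skorohod-space continuity lemmas (Lemmas~\ref{lem:EKex1} and~\ref{lem:EKex2}) to obtain convergence in $D([0,\infty),\mathbb{R})$ before projecting at $t$, whereas you work more directly: you exploit that Skorohod convergence to a continuous limit upgrades to uniform convergence on compacts, and then apply dominated convergence on $[0,t]$. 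Your route is slightly more elementary and self-contained; the paper's is more modular.
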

An immediate corollary of these two theorems is the following.
\begin{corollary}\label{cor:limit_solves_mfe}
Any weak limit point $\mu\left( \cdot \right)$ of $\left\{ \mu_{n} \left( \cdot \right) \right\}_{n \geq 1}$ in the Skorohod space $D\left( [0,\infty), \mathcal{P}_{1}\left( \mathbb{R} \right) \right)$ solves the mean field equation almost surely. That is, if $\mu_{n} \left( \cdot \right) \Rightarrow_{n} \mu\left( \cdot \right)$ in $D\left( [0,\infty), \mathcal{P}_{1}\left( \mathbb{R} \right) \right)$, then for every $t\geq 0$ and every $f \in H$, $A_{t,f} \left( \mu\left( \cdot \right) \right) = 0$ almost surely.
\end{corollary}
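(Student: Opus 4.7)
The plan is to combine Theorems~\ref{thm:error_vanishes} and~\ref{thm:weak_conv_of_eq} through a uniqueness-of-weak-limits argument applied to the same real-valued sequence $\{A_{t,f}(\mu_n(\cdot))\}_{n\geq 1}$; essentially all the real work has been done in those two theorems, so the corollary reduces to a short bookkeeping step.

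Concretely, fix $t \geq 0$ and $f \in H$. First, I would invoke Theorem~\ref{thm:error_vanishes}, which states that
\begin{equation*}
\sup_{0\leq s \leq t} \left| A_{s,f}\left( \mu_n(\cdot) \right) \right| \xrightarrow[n\to\infty]{\mathbb{P}} 0.
\end{equation*}
In particular $|A_{t,f}(\mu_n(\cdot))| \to 0$ in probability, and since convergence in probability to a constant implies weak convergence, $A_{t,f}(\mu_n(\cdot)) \Rightarrow_n 0$ in $\mathbb{R}$.

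Next, I would use the hypothesis $\mu_n(\cdot) \Rightarrow_n \mu(\cdot)$ in $D([0,\infty), \mathcal{P}_1(\mathbb{R}))$ together with Theorem~\ref{thm:weak_conv_of_eq} to conclude that $A_{t,f}(\mu_n(\cdot)) \Rightarrow_n A_{t,f}(\mu(\cdot))$ in $\mathbb{R}$. The two weak limits must coincide, so the distribution of $A_{t,f}(\mu(\cdot))$ equals the Dirac mass at $0$, which means $\mathbb{P}(A_{t,f}(\mu(\cdot)) = 0) = 1$. Since $t$ and $f \in H$ were arbitrary, this proves the corollary.

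There is no real obstacle here; the only point worth flagging is that the exceptional null set may depend on the pair $(t,f)$, but the statement only asserts the per-pair almost-sure identity, so this is harmless. If one instead wanted a single null set outside of which $A_{t,f}(\mu(\cdot)) = 0$ simultaneously for all $t \geq 0$ and all $f \in H$, one would need to exploit continuity in $t$ of $A_{t,f}(\mu(\cdot))$ (clear from its definition and the cadlag property of $\mu(\cdot)$) together with a suitable separability argument to restrict attention to a countable dense family in $H$; but this strengthening is not needed for the present statement.
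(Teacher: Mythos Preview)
Your argument is correct and matches the paper's proof essentially line for line: fix $t$ and $f$, use Theorem~\ref{thm:error_vanishes} to get $A_{t,f}(\mu_n(\cdot))\Rightarrow_n 0$, use Theorem~\ref{thm:weak_conv_of_eq} to get $A_{t,f}(\mu_n(\cdot))\Rightarrow_n A_{t,f}(\mu(\cdot))$, and invoke uniqueness of weak limits. Your remark about the $(t,f)$-dependence of the null set is a nice clarification that the paper does not make explicit.
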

The proofs of Theorem~\ref{thm:error_vanishes}, Theorem~\ref{thm:weak_conv_of_eq}, and Corollary~\ref{cor:limit_solves_mfe} can be found in Section~\ref{ch:LimitSolvesMF}.

Our final result concerns the mean field equation: the solution with a given initial condition is unique.
\begin{theorem}\label{thm:cont_dep_init_cond}
Suppose $\mu^{1} \left( \cdot \right)$ and $\mu^{2} \left( \cdot \right)$ are solutions to the mean field equation with initial conditions $\mu_{0}^{1}$ and $\mu_{0}^{2}$ respectively. Denote by $d_H \left( t \right)$ the distance $d_H \left( \mu^{1} \left( t \right), \mu^{2} \left( t \right) \right)$ of the two measures at time $t$ according to the metric defined by~\eqref{eq:prob_metric} with $\mathcal{H} = H$. Then there exists a constant $c$ such that
\begin{equation}\label{eq:cont_dep_init_cond}
d_H\left( t \right) \leq d_H\left( 0 \right) e^{ct}.
\end{equation}
\end{theorem}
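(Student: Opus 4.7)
The plan is to derive a Gr\"onwall-type integral inequality for $d_H(t)$ using the mean field equation. Fix $f \in H$. Since both $\mu^1(\cdot)$ and $\mu^2(\cdot)$ satisfy $A_{t,f}(\mu^i(\cdot))=0$, subtracting and rearranging gives
\begin{equation*}
\langle f,\mu^1(t)\rangle - \langle f,\mu^2(t)\rangle
= \bigl(\langle f,\mu^1(0)\rangle - \langle f,\mu^2(0)\rangle\bigr)
+ \int_0^t \bigl(L\langle f,\mu^1(s)\rangle - L\langle f,\mu^2(s)\rangle\bigr)\,ds,
\end{equation*}
so it suffices to control the integrand uniformly in $f\in H$ by a constant multiple of $d_H(s) := d_H(\mu^1(s),\mu^2(s))$. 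To do so I would telescope the generator difference by inserting the ``mixed'' term in which the measure is $\mu^2(s)$ but the rate function is evaluated at $m^1(s)$, thereby writing $L\langle f,\mu^1(s)\rangle - L\langle f,\mu^2(s)\rangle = \mathrm{(A)} + \mathrm{(B)}$, where
\begin{equation*}
\mathrm{(A)} = \bigl\langle \bigl(\E f(x+Z)-f(x)\bigr)w(x-m^1(s)),\,\mu^1(s)-\mu^2(s)\bigr\rangle,
\end{equation*}
\begin{equation*}
\mathrm{(B)} = \bigl\langle \bigl(\E f(x+Z)-f(x)\bigr)\bigl(w(x-m^1(s))-w(x-m^2(s))\bigr),\,\mu^2(s)\bigr\rangle.
\end{equation*}

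For term (A), the key observation is that the test function $g(x) := (\E f(x+Z)-f(x))\,w(x-m^1(s))$ lies in $2a\cdot H$. Indeed, if $f\in C_b$ with $|f|\le 1$, then by dominated convergence $x\mapsto \E f(x+Z)$ is continuous, so $g\in C_b$, and $|g|\le 2a$; if $f=\Id$, then $\E f(x+Z)-f(x)=\E Z=1$ identically and $g=w(\cdot-m^1(s))$ is bounded by $a$ (and continuous, since $w$ is differentiable). In either case $g/(2a)\in H$, yielding $|\mathrm{(A)}|\le 2a\,d_H(s)$. For term (B), $|\E f(x+Z)-f(x)|\le 2$ (or $=1$ for $f=\Id$), while the mean value theorem and the assumed boundedness of $w'$ give $|w(x-m^1(s))-w(x-m^2(s))|\le \|w'\|_\infty|m^1(s)-m^2(s)|$. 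Since $\Id\in H$, we have $|m^1(s)-m^2(s)|\le d_H(s)$, so $|\mathrm{(B)}|\le 2\|w'\|_\infty\,d_H(s)$.

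Combining the two bounds, and setting $c:=2a+2\|w'\|_\infty$, the right-hand side of the displayed identity is bounded in absolute value by $d_H(0)+c\int_0^t d_H(s)\,ds$ \emph{uniformly in} $f\in H$. Taking the supremum over $f\in H$ on the left yields $d_H(t)\le d_H(0)+c\int_0^t d_H(s)\,ds$, and Gr\"onwall's inequality gives \eqref{eq:cont_dep_init_cond}. (Here $d_H(s)$ is finite and locally bounded because $\mu^i(s)\in\mathcal{P}_1(\mathbb{R})$ and cadlag in $s$, so the integral is well defined.) The main obstacle is engineering the decomposition so that the ``frozen-rate'' test function $g$ falls inside a fixed constant multiple of the class $H$; this is where the boundedness of $w$ enters decisively for part (A), while the boundedness of $w'$ and the inclusion of $\Id$ in $H$ are what make part (B) controllable by $d_H(s)$ itself rather than by a stronger metric. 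Uniqueness of the solution to the mean field equation with a given initial condition (Corollary~\ref{cor:uniqueness}) then follows by setting $\mu_0^1=\mu_0^2$.
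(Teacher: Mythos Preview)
Your proof is correct and follows essentially the same route as the paper: derive the integral inequality $d_H(t)\le d_H(0)+c\int_0^t d_H(s)\,ds$ via the same two-term decomposition (fixing the rate at $m^1(s)$ and varying the measure, then fixing the measure at $\mu^2(s)$ and varying the rate), bound (A) by $2a\,d_H(s)$ using that the resulting test function is bounded continuous, bound (B) by $2\|w'\|_\infty\,d_H(s)$ via the mean value theorem and $\Id\in H$, and conclude by Gr\"onwall with the same constant $c=2a+2\|w'\|_\infty$.
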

\begin{corollary}\label{cor:uniqueness}
Suppose $\mu^{1} \left( \cdot \right)$ and $\mu^{2} \left( \cdot \right)$ are solutions to the mean field equation with the same initial condition. Then $\mu^{1} \left( \cdot \right) = \mu^{2} \left( \cdot \right)$, i.e.\ the solution to the mean field equation with a given initial condition is unique.
\end{corollary}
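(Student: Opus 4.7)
The plan is to deduce this corollary directly from Theorem~\ref{thm:cont_dep_init_cond}: continuous dependence on initial conditions in the form~\eqref{eq:cont_dep_init_cond} is a Gronwall-type estimate, and it is standard that such an estimate immediately yields uniqueness. All I would need to verify is that $d_H$ is a genuine metric (not merely a pseudometric) on $\mathcal{P}_1(\mathbb{R})$, so that vanishing $d_H$-distance implies equality of measures.

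First I would note that $\mu^1(0) = \mu^2(0)$ trivially implies $d_H(0) = 0$, since $\langle f, \mu^1(0)\rangle - \langle f, \mu^2(0)\rangle = 0$ for every $f \in H$. Applying Theorem~\ref{thm:cont_dep_init_cond} then gives
\begin{equation*}
d_H\bigl(\mu^1(t), \mu^2(t)\bigr) \leq 0 \cdot e^{ct} = 0 \qquad \text{for every } t \geq 0.
\end{equation*}
Second, I would invoke the observation already made in Section~\ref{ch:ProbMetrics}: the choice $\mathcal{H} = \{f \in C_b : |f| \leq 1\}$ already produces the Radon metric, which separates probability measures by Billingsley's Theorem~1.2, and adjoining $\mathrm{Id}$ to this class only enlarges the separating family. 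Hence $d_H$ is a metric on $\mathcal{P}_1(\mathbb{R})$, and so $d_H(\mu^1(t), \mu^2(t)) = 0$ forces $\mu^1(t) = \mu^2(t)$ for every $t \geq 0$, which is exactly $\mu^1(\cdot) = \mu^2(\cdot)$.

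There is no real obstacle at the level of the corollary: all of the analytic content — controlling the $d_H$-distance between two solutions via a Gronwall argument that copes with the nonlocal, center-of-mass-dependent right-hand side of the mean field equation — is concentrated in Theorem~\ref{thm:cont_dep_init_cond}. The corollary itself is purely a matter of substituting $d_H(0) = 0$ into~\eqref{eq:cont_dep_init_cond} and using that $d_H$ separates points.
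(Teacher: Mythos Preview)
Your proposal is correct and follows essentially the same approach as the paper: substitute $d_H(0)=0$ into the estimate~\eqref{eq:cont_dep_init_cond} of Theorem~\ref{thm:cont_dep_init_cond} to get $d_H(t)=0$ for all $t\ge 0$, then use that $d_H$ is a genuine metric (as established in Section~\ref{ch:ProbMetrics}) to conclude $\mu^1(t)=\mu^2(t)$. The paper's proof is a one-line version of exactly this argument.
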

Let us now prove our main result.
\begin{proof}[Proof of Theorem~\ref{thm:main}]
The sequence $\left\{ \mu_{n} \left( \cdot \right) \right\}_{n \geq 1}$ is tight in $D\left( [0,\infty), \mathcal{P}_{1}\left( \mathbb{R} \right) \right)$ due to Corollary~\ref{cor:tightness_P1(R)}. Corollary~\ref{cor:limit_solves_mfe} shows that any limit point satisfies the mean field equation. Due to Assumption 3, any limit point satisfies the mean field equation with deterministic initial condition $\nu$. We know that the solution to the mean field equation with a given initial condition is unique (Corollary~\ref{cor:uniqueness}). Consequently, due to Prohorov's theorem, $\mu_{n} \left( \cdot \right) \Rightarrow_{n} \mu \left( \cdot \right)$, since we have proved tightness and the uniqueness of any limit point.
\end{proof}


\subsection{Tightness}\label{ch:Tightness}

In this subsection we prove Theorem~\ref{thm:tightness_R} and Corollary~\ref{cor:tightness_P1(R)}.

Before starting the proof, let us introduce a particle system $\left\{ \tilde{x}_{i}\left( \cdot \right) \right\}_{i=1}^{n}$, related to $\left\{ x_{i}\left( \cdot \right) \right\}_{i=1}^{n}$, which we use in the proof.  Initially, the two particle systems have the same configurations: $\tilde{x}_{i} \left( 0 \right) = x_{i} \left( 0 \right)$ for each $i$. Let $\left\{ \tilde{x}_{i}\left( \cdot \right) \right\}_{i=1}^{n}$ be a particle system in which all the particles are independent from each other, all of the particles jump with rate $a$ (recall that $a = \lim_{x \to -\infty} w\left( x \right) = \sup_x w\left( x \right)$), and whenever they jump, the jump length is a random variable from the same jump length distribution of the original system, independently of everything else. In other words, let $S_{i}\left( \cdot \right)$ be independent Poisson processes with rate $a$ for each $i$ , let $\xi_{1}^{i}, \xi_{2}^{i}, \dots$ be i.i.d.\ random variables from the jump length distribution, and define $\tilde{x}_{i} \left( t \right) = x_{i} \left( 0 \right) + \sum_{k=1}^{S_{i}\left( t \right)} \xi_{k}^{i}$. Moreover, $\left\{ x_{i}\left( \cdot \right) \right\}_{i=1}^{n}$ and $\left\{ \tilde{x}_{i}\left( \cdot \right) \right\}_{i=1}^{n}$ can be constructed jointly on the same probability space in the following way. Define $\left\{ \tilde{x}_{i}\left( \cdot \right) \right\}_{i=1}^{n}$ exactly as above, and whenever a particle $\tilde{x}_{i}$ jumps, let $x_{i}$ jump the same jump length with probability $w\left( x_{i} \left( t \right) - m_{n} \left( t \right) \right) / a$; with the remaining probability let $x_{i}$ stay in place. This joint construction leads to two advantageous monotonicity properties: first, \begin{equation*}
\tilde{x}_{i} \left( t \right) \geq x_{i} \left( t \right)
\end{equation*}
for any $i$ and $t\geq 0$, and second,
\begin{equation}\label{eq:xtilde2}
\tilde{x}_{i} \left( t \right) - \tilde{x}_{i} \left( s \right) \geq x_{i} \left( t \right) - x_{i} \left( s \right)
\end{equation}
for any $i$ and $t \geq s \geq 0$.

\begin{proof}[Proof of Theorem~\ref{thm:tightness_R}]
Let $f \in C_b$, and denote by $K$ the upper bound of $\left|f\right|$, i.e.\ $\left| f\left( x \right) \right| \leq K$ for all $x$. Define $I_{n} \left( \cdot \right) := \left\langle f, \mu_{n} \left( \cdot \right) \right\rangle = \frac{1}{n} \sum_{i=1}^{n} f\left( x_{i} \left( \cdot \right) \right)$. Clearly $\left| I_{n} \left( t \right) \right| \leq K$ for any $t$. In order to prove tightness in $D\left( [0, \infty), \mathbb{R} \right)$ we need to show that~\eqref{eq:GK1} and~\eqref{eq:GK2} hold with $Y_n$ replaced with $I_n$. Since $f$ is bounded it is immediate that~\eqref{eq:GK1} holds. Now let us show that~\eqref{eq:GK2} holds. Our basic observation is that
\[
\left| I_{n} \left( t \right) - I_{n} \left( s \right) \right| = \frac{1}{n} \left| \sum\limits_{i=1}^{n} \left( f\left( x_{i} \left( t \right) \right) - f\left( x_{i} \left( s \right) \right) \right) \right| \leq \frac{1}{n} \sum\limits_{i=1}^{n} \left| f\left( x_{i} \left( t \right) \right) - f\left( x_{i} \left( s \right) \right) \right| \leq \frac{2K}{n} \sum\limits_{i=1}^{n} \mathbf{1}\left[ x_{i} \left( t \right) > x_{i} \left( s \right) \right].
\]
By monotonicity of $x_i \left( \cdot \right)$ and using~\eqref{eq:xtilde2} we have
\[
\begin{aligned}
\p \left( \sup\limits_{\substack{0 < t - s < \delta \\ 0 \leq s \leq t \leq T}} \left| I_{n} \left( t \right) - I_{n} \left( s \right) \right| > \eps \right) &\leq \p \left( \sup\limits_{\substack{0 < t - s < \delta \\ 0 \leq s \leq t \leq T}} \frac{2K}{n} \sum\limits_{i=1}^{n} \mathbf{1} \left[ x_{i} \left( t \right) > x_{i} \left( s \right) \right] > \eps \right)\\
&\leq \p \left( \sup\limits_{0 \leq s \leq T} \frac{2K}{n} \sum\limits_{i=1}^{n} \mathbf{1} \left[ x_{i} \left( s + 2 \delta \right) > x_{i} \left( s \right) \right] > \eps \right)\\
&\leq \p \left( \sup\limits_{0 \leq s \leq T} \frac{2K}{n} \sum\limits_{i=1}^{n} \mathbf{1} \left[ \tilde{x}_{i} \left( s + 2 \delta \right) > \tilde{x}_{i} \left( s \right) \right] > \eps \right).
\end{aligned}
\]
To abbreviate notation, define $Y_{n,\delta} \left( s \right) := \frac{2K}{n} \sum_{i=1}^{n} \mathbf{1} \left[ \tilde{x}_{i} \left( s + \delta \right) > \tilde{x}_{i} \left( s \right) \right]$. Now if $\sup_{0 \leq s \leq T} Y_{n,2\delta} \left( s \right) > \eps$, then there exists an integer $k$, $0 \leq k \leq \left\lceil T / \delta \right\rceil - 1$, such that $Y_{n,\delta} \left( k \delta \right) \geq \eps/ 4$. Suppose the contrary, that for every $k$ the above expression is less than $\eps / 4$; then since an interval of length $2\delta$ can intersect at most three neighboring intervals of length $\delta$, it follows that $\sup_{0 \leq s \leq T} Y_{n,2\delta} \left( s \right) < 3 \eps / 4$. Therefore
\[
\begin{aligned}
\p \left( \sup\limits_{0 \leq s \leq T} Y_{n,2\delta} \left( s \right) > \eps \right) &\leq \p \left( \exists k, 0 \leq k \leq \left\lceil T / \delta \right\rceil - 1\ : Y_{n,\delta} \left( k \delta \right) \geq \eps/ 4 \right)\\
&= 1 - \p \left( \forall k, 0 \leq k \leq \left\lceil T / \delta \right\rceil - 1\ : Y_{n,\delta} \left( k \delta \right) < \eps/ 4 \right)\\
&= 1 - \left( \p \left( Y_{n,\delta} \left( 0 \right) < \eps/ 4 \right) \right)^{\left\lceil T / \delta \right\rceil}\\
&= 1 - \left( 1 - \p \left( Y_{n,\delta} \left( 0 \right) \geq \epsilon / 4 \right) \right)^{\left\lceil T / \delta \right\rceil},
\end{aligned}
\]
where we used that for every $i$, $\tilde{x}_{i} \left( \cdot \right)$ has independent and stationary increments. We bound the probability in the last line using Markov's inequality:
\begin{equation*}
\p \left( \sum\limits_{i=1}^{n}  \mathbf{1} \left[ \tilde{x}_{i} \left( \delta \right) > \tilde{x}_{i} \left( 0 \right) \right] > \frac{n\eps}{8K} \right) \leq \E \left( \left( \sum\limits_{i=1}^{n} \mathbf{1} \left[ \tilde{x}_{i} \left( \delta \right) > \tilde{x}_{i} \left( 0 \right) \right] \right)^{2} \right) / \frac{n^{2} \eps^{2}}{64 K^{2}},
\end{equation*}
where
\[
\begin{aligned}
\E \left( \left( \sum\limits_{i=1}^{n} \mathbf{1} \left[ \tilde{x}_{i} \left( \delta \right) > \tilde{x}_{i} \left( 0 \right) \right] \right)^{2} \right) &= n \p \left( \tilde{x}_{1} \left( \delta \right) > \tilde{x}_{1} \left( 0 \right) \right) + \left( n^{2} - n \right) \p \left( \tilde{x}_{1} \left( \delta \right) > \tilde{x}_{1} \left( 0 \right) \text{ and } \tilde{x}_{2} \left( \delta \right) > \tilde{x}_{2} \left( 0 \right) \right)\\
&= n \left( 1 - e^{-a \delta} \right) + \left( n^{2} - n \right) \left( 1 - e^{-a \delta} \right)^{2}\\
&\approx na \delta + \left( n^{2} - n \right) \left( a \delta \right)^{2},
\end{aligned}
\]
from which~\eqref{eq:GK2} follows immediately.
\end{proof}

\begin{proof}[Proof of Corollary~\ref{cor:tightness_P1(R)}]
In order to prove this corollary we need to check the two conditions of Theorem~\ref{thm:perkins_mod}. The second condition follows immediately from Theorem~\ref{thm:tightness_R}. It remains to show the first condition of Theorem~\ref{thm:perkins_mod}. Due to monotonicity of the $x_{i} \left( t \right)$ in $t$, we have
\[
\begin{aligned}
\sup_{0\leq t \leq T} \mu_{n} \left( t, (-\infty, -K) \cup (K, \infty) \right) &\leq \sup_{0\leq t \leq T} \mu_{n} \left( t, (-\infty, -K) \right) + \sup_{0\leq t \leq T} \mu_{n} \left( t, (K, \infty) \right)\\
&= \mu_{n} \left( 0, (-\infty, -K) \right) + \mu_{n} \left( T, (K, \infty) \right),
\end{aligned}
\]
and consequently
\begin{multline*}
\sup\limits_{n} \p \left( \sup\limits_{0\leq t \leq T} \mu_{n} \left( t, (-\infty, -K) \cup (K, \infty) \right) > \eps \right)\\
\begin{aligned}
&\leq \sup\limits_{n} \p \left( \mu_{n} \left( 0, (-\infty, -K) \right) + \mu_{n} \left( T, (K, \infty) \right) > \eps \right)\\
&\leq \sup\limits_{n} \p \left( \mu_{n} \left( 0, (-\infty, -K) \right) > \eps / 2 \right) + \sup\limits_{n} \p \left( \mu_{n} \left( T, (K, \infty) \right) > \eps / 2 \right).
\end{aligned}
\end{multline*}
Let us look at the second term:
\begin{align*}
\mu_{n} \left( T, (K, \infty) \right) &= \frac{1}{n} \sum \limits_{i=1}^{n} \mathbf{1} \left[ x_{i} \left( T \right) > K \right]\\
&\leq \frac{1}{n} \sum \limits_{i=1}^{n} \mathbf{1} \left[ x_{i} \left( 0 \right) > K / 2 \right] +  \frac{1}{n} \sum \limits_{i=1}^{n} \mathbf{1} \left[ x_{i} \left( T \right) - x_{i} \left( 0 \right) > K / 2 \right]\\
&\leq \frac{1}{n} \sum \limits_{i=1}^{n} \mathbf{1} \left[ x_{i} \left( 0 \right) > K / 2 \right] +  \frac{1}{n} \sum \limits_{i=1}^{n} \mathbf{1} \left[ \tilde{x}_{i} \left( T \right) - \tilde{x}_{i} \left( 0 \right) > K / 2 \right],
\end{align*}
and so consequently
\[
\begin{aligned}
\sup\limits_{n} \p \left( \mu_{n} \left( T, (K, \infty) \right) > \eps / 2 \right) &\leq \sup\limits_{n} \p \left( \mu_{n} \left( 0, \left( K / 2, \infty \right) \right) > \eps / 4 \right)\\
&\quad + \sup\limits_{n} \p \left( \frac{1}{n} \sum \limits_{i=1}^{n} \mathbf{1} \left[ \tilde{x}_{i} \left( T \right) - \tilde{x}_{i} \left( 0 \right) > K / 2 \right] > \eps / 4 \right).
\end{aligned}
\]
Let us look at the second term on the right hand side. Using Markov's inequality and the fact that the $\tilde{x}_i$'s are identically distributed, we have
\[
\p \left( \frac{1}{n} \sum \limits_{i=1}^{n} \mathbf{1} \left[ \tilde{x}_{i} \left( T \right) - \tilde{x}_{i} \left( 0 \right) > K / 2 \right] > \eps / 4 \right) \leq \frac{4}{\eps} \p \left( \tilde{x}_1 \left( T \right) - \tilde{x}_1 \left( 0 \right) > K/2 \right).
\]
Now using Markov's inequality again, we get that 
\[
\frac{4}{\eps} \p \left( \tilde{x}_1 \left( T \right) - \tilde{x}_1 \left( 0 \right) > K/2 \right) = \frac{4}{\eps} \p \left( \left( \tilde{x}_1 \left( T \right) - \tilde{x}_1 \left( 0 \right) \right)^{3} > K^{3} / 8 \right) \leq \frac{32}{K^3 \eps} \E \left( \left( \tilde{x}_1 \left( T \right) - \tilde{x}_1 \left( 0 \right) \right)^{3} \right) =: \frac{\tilde{c} \left( T \right) }{K^3 \eps}.
\]
Now putting everything together we get
\[
\begin{aligned}
\sup\limits_{n} \p \left( \sup\limits_{0\leq t \leq T} \mu_{n} \left( t, (-\infty, -K) \cup (K, \infty) \right) > \eps \right) &\leq \sup\limits_{n} \p \left( \mu_{n} \left( 0, (-\infty, -K) \right) > \eps / 2 \right)\\
&\quad+ \sup\limits_{n} \p \left( \mu_{n} \left( 0, \left( K / 2, \infty \right) \right) > \eps / 4 \right) + \frac{\tilde{c} \left( T \right)}{K^3 \eps},
\end{aligned}
\]
and we need the left hand side to be at most $\eps$, for which it is enough that the right hand side is at most $\eps$.

So let $T > 0$ and $\eps > 0$ be fixed, and let us choose $c = c\left( T \right)$, $\tau = \tau \left( T \right)$, and $K = K_{T, \eps}$ as follows:
\begin{align*}
c\left( T \right) &:= \max \left\{ 3 \tilde{c} \left( T \right), 8 \hat{c}^3 \right\}\\
\tau \left( T \right) &:= 1/3\\
K_{T,\eps} &:= c\left( T \right)^{1/3} \eps^{-2/3},
\end{align*}
where $\hat{c}$ is given by Assumption 2. Then $K^{3}  \geq 3 \tilde{c}\left( T \right) \eps^{-2}$, and so $ \tilde{c} \left( T \right) / \left( K^{3} \eps \right) \leq \eps / 3$. On the other hand $K \geq 2 \hat{c} \eps^{-2/3}$, and so using Assumption 2 we have
\begin{align*}
\sup\limits_{n} \p \left( \mu_{n} \left( 0, (-\infty, -K) \right) > \eps / 2 \right) &\leq \eps / 3\\
\sup\limits_{n} \p \left( \mu_{n} \left( 0, \left( K / 2, \infty \right) \right) > \eps / 4 \right) &\leq \eps / 3.
\end{align*}
Putting all these together concludes the proof.
\end{proof}

As mentioned before, there are other ways to check tightness, and here we sketch Thomas G. Kurtz's proof (see also Kotelenez and Kurtz~\cite[Appendix]{kotelenez_kurtz2010macrolim}). First note that
\[
d_1 \left( \mu, \nu \right) \leq \E \left| X - Y \right|,
\]
where $X$ and $Y$ are any random variables with distributions $\mu$ and $\nu$ respectively. The paths of our particles $\left\{ x_i \left( \cdot \right) \right\}_{i=1}^{n}$ are exchangeable, and so for any $\left\{ \mathcal{F}_{t}^{\mu_n \left( \cdot \right)} \right\}$-stopping time $\tau$ we have
\begin{equation}\label{eq:Kurtz1}
\E \left( d_1 \left( \mu_n \left( \tau \right), \mu_n \left( \tau + u \right) \right) \right) \leq \E \left( x_1 \left( \tau + u \right) - x_1 \left( \tau \right) \right).
\end{equation}
For each $T > 0$, let $S_n \left( T \right)$ be the collection of $\left\{ \mathcal{F}_{t}^{\mu_n \left( \cdot \right)} \right\}$-stopping times $\tau$ satisfying $\tau \leq T$. Then by~\eqref{eq:Kurtz1} and Aldous's criterion (see e.g.\ Ethier and Kurtz~\cite[Theorem 8.6]{ethier1986markov}), $\left\{ \mu_n \left( \cdot \right) \right\}_{n \geq 1}$ is tight in $D \left( [0, \infty), \mathcal{P}_1 \left( \mathbb{R} \right) \right)$ if for each $T > 0$,
\[
\lim_{u \to 0} \limsup_{n \to \infty} \sup_{\tau \in S_{n} \left( T \right)} \E \left( x_1 \left( \tau + u \right) - x_1 \left( \tau \right) \right) = 0,
\]
and also condition (a) of~\cite[Theorem 7.2]{ethier1986markov} holds. In our case, for a bounded jump rate function $w$, these conditions can be checked just like before, in the previous way of showing tightness.


\subsection{Identifying the limit}\label{ch:LimitSolvesMF}

In this section we prove Theorem~\ref{thm:error_vanishes}, Theorem~\ref{thm:weak_conv_of_eq}, and Corollary~\ref{cor:limit_solves_mfe}.

\begin{proof}[Proof of Theorem~\ref{thm:error_vanishes}]
Let $\mathcal{F}_{t}$ denote the filtration generated by the $n$ particles at time $t$. If $X\left( t \right)$ is measurable with respect to $\mathcal{F}_{t}$ then let
\begin{equation*}
L X\left( t\right) := \lim\limits_{dt \to 0} \frac{1}{dt} \E \left( X\left( t + dt \right) - X \left( t \right) \, \middle| \, \mathcal{F}_{t} \right).
\end{equation*}
We know that $X\left( t \right) - X\left( 0 \right) - \int\limits_{0}^{t} L X\left( s \right) ds$ is a martingale. Let $t > 0$ and $f \in H$ be fixed from now on. As in the previous section, define $I_{n} \left( \cdot \right) := \left\langle f, \mu_{n} \left( \cdot \right) \right\rangle = \frac{1}{n} \sum_{i=1}^{n} f\left( x_{i} \left( \cdot \right) \right)$. Calculation shows that
\begin{equation*}
L I_{n} \left( t \right) = \left\langle \left( \E \left( f \left( x + Z \right) \right) - f\left( x \right) \right) w\left( x - m_{n}\left( t \right) \right), \mu_{n} \left( t \right) \right\rangle,
\end{equation*}
which means that $M_{n} \left( t\right):= A_{t,f} \left( \mu_{n} \left( \cdot \right) \right)$ is a martingale in $t$ with $M_{n} \left( 0 \right) = 0$. Therefore by Doob's inequality (see e.g.\ Jacod and Shiryaev~\cite[p. 11]{jacod1987limit}) we have that for any $c > 0$,
\begin{equation*}
\p\left( \sup\limits_{0 \leq s \leq t} \left| M_{n} \left( s \right) \right| \geq c \right) \leq \frac{\E\left( \sup\limits_{0 \leq s \leq t}  M_{n}^{2} \left( s\right) \right)}{c^{2}} \leq \frac{4 \E \left( M_{n}^{2} \left( t\right) \right)}{c^{2}}.
\end{equation*}
So in order to show~\eqref{eq:conv_to_0_inprob} we need to show that
\begin{equation}\label{eq:E_Mn2_to_0}
\lim\limits_{n \to \infty} \E \left( M_{n}^{2} \left( t \right) \right) = 0.
\end{equation}
We know that 
\begin{equation*}
N_{n} \left( t \right) := M_{n}^{2} \left( t \right) - \int\limits_{0}^{t} L M_{n}^{2} \left( s \right) ds
\end{equation*}
is a martingale with $N_{n} \left( 0 \right) = 0$, and therefore
\begin{equation}
\E\left( M_{n}^{2} \left( t \right) \right) = \E \left( \int\limits_{0}^{t} L M_{n}^{2} \left( s \right) ds \right) = \int\limits_{0}^{t} \E \left( L M_{n}^{2} \left( s \right) \right) ds.\label{eq:fubini}
\end{equation}
The order of the expectation and the integral can be switched due to the Fubini-Tonelli theorem, because $L M_{n}^{2} \left( s \right) \geq 0$ as we will see shortly. Let us calculate $L M_{n}^{2} \left( t \right)$:
\begin{align*}
L M_{n}^{2} \left( t \right) &= \lim_{dt \to 0} \frac{1}{dt} \E \left( M_{n}^{2} \left( t + dt \right) - M_{n}^{2} \left( t \right) \, \middle| \, \mathcal{F}_{t} \right)\\
&= \lim_{dt \to 0} \frac{1}{dt} \E \left( \left( M_{n} \left( t + dt \right) - M_{n} \left( t \right) \right)^{2} \, \middle| \, \mathcal{F}_{t} \right),
\end{align*}
where we used that $M_{n} \left( t \right)$ is a martingale. Since $L I_{n} \left( s \right)$ is bounded, we have
\begin{align*}
M_{n}\left( t + dt \right) - M_{n} \left( t \right) &= \left( I_{n} \left( t + dt \right) - I_{n} \left( t \right) \right) - \int\limits_{t}^{t + dt} L I_{n} \left( s \right) ds\\
&= \left( I_{n} \left( t + dt \right) - I_{n} \left( t \right) \right) + O\left( dt \right)
\end{align*}
and so consequently
\begin{align*}
L M_{n}^{2} \left( t \right) &= \lim_{dt \to 0} \frac{1}{dt} \E \left( \left( M_{n} \left( t + dt \right) - M_{n} \left( t \right) \right)^{2} \, \middle| \, \mathcal{F}_{t} \right)\\
&= \lim_{dt \to 0} \frac{1}{dt} \E \left( \left( I_{n} \left( t + dt \right) - I_{n} \left( t \right) \right)^{2} \, \middle| \, \mathcal{F}_{t} \right)\\
&= \frac{1}{n^{2}}\lim_{dt \to 0} \frac{1}{dt} \E \left( \left( \sum\limits_{i=1}^{n} \left( f\left( x_{i}\left( t + dt \right) \right) - f \left( x_{i}\left( t \right) \right) \right) \right)^{2} \, \middle| \, \mathcal{F}_{t} \right)\\
&= \frac{1}{n^{2}} \sum\limits_{i=1}^{n} w\left( x_{i}\left( t \right) - m_{n} \left( t \right) \right) \E \left( \left( f\left( x_{i}\left( t \right) + Z \right) - f\left( x_{i}\left( t \right) \right) \right)^{2} \right).
\end{align*}
Now if $f \in C_{b}$, $\left| f \right| \leq 1$, then $L M_{n}^{2} \left( t \right) \leq 4a / n$. If $f = \Id$, then $L M_{n}^{2} \left( t \right) \leq a E\left( Z^{2} \right) / n$. In both cases this shows, via~\eqref{eq:fubini}, that~\eqref{eq:E_Mn2_to_0} holds.
\end{proof}

\begin{proof}[Proof of Theorem~\ref{thm:weak_conv_of_eq}]
Recall that $w \leq a$ and that $\left| w' \right| \leq a'$.

We first show that for every $t\geq 0$ and every $f \in H$,
\begin{equation}\label{eq:conv1}
\left\langle f, \mu_{n}\left( t \right) \right\rangle \Rightarrow_{n} \left\langle f, \mu\left( t \right) \right\rangle
\end{equation}
in $\mathbb{R}$. First of all, we know that the projection $\pi_{t}\ :\ D\left( [0,\infty), \mathcal{P}_{1}\left( \mathbb{R} \right) \right) \to \mathcal{P}_{1}\left( \mathbb{R} \right)$, given by $\pi_{t} \left( \nu \left( \cdot \right) \right) = \nu\left( t \right)$, is continuous at $\nu \left( \cdot \right)$ if and only if $\nu \left( \cdot \right)$ is continuous at $t$. Since $\mu\left( \cdot \right) \in C\left( [0,\infty), \mathcal{P}_{1}\left( \mathbb{R} \right) \right)$ a.s. (by Corollary~\ref{cor:tightness_P1(R)}), the continuous mapping theorem and the fact that $\mu_{n} \left( \cdot \right) \Rightarrow_{n} \mu\left( \cdot \right)$ in $D\left( [0,\infty), \mathcal{P}_{1}\left( \mathbb{R} \right) \right)$ together imply that for every $t \geq 0$,
\begin{equation*}
\mu_{n} \left( t \right) \Rightarrow_{n} \mu \left( t \right)
\end{equation*}
in $\mathcal{P}_{1}\left( \mathbb{R} \right)$. Now for any $f \in H$, the function $h_{f}\ :\ \mathcal{P}_{1}\left( \mathbb{R} \right) \to \mathbb{R}$, given by $h_{f} \left( \nu \right) = \left\langle f, \nu \right\rangle$, is continuous. (This follows easily from the equivalence of (1) and (2) in Lemma~\ref{lem:was_metric}.) Using the continuous mapping theorem, this shows~\eqref{eq:conv1}. 

We introduce $g_{f} \left( x \right) := \left( \E \left( f \left( x + Z \right) \right) - f\left( x \right) \right)$ in order to abbreviate notation. What remains to be shown is that for every $t\geq 0$ and every $f \in H$,
\begin{equation}\label{eq:conv2}
\int\limits_{0}^{t} \left\langle g_{f}\left( x \right) w \left( x - m_{n} \left( s \right) \right), \mu_{n} \left( s \right) \right\rangle ds \Rightarrow_{n} \int\limits_{0}^{t} \left\langle g_{f}\left( x \right) w \left( x - m \left( s \right) \right), \mu \left( s \right) \right\rangle ds
\end{equation}
in $\mathbb{R}$. First, let us show that for any function $f \in H$, the function $h_{f}\ :\ \mathcal{P}_{1}\left( \mathbb{R} \right) \to \mathbb{R}$, given by\linebreak[4] $h_{f} \left( \nu \right) = \left\langle g_{f}\left( x \right) w \left( x - M \right), \nu \right\rangle$, is continuous, where $M = \int x \nu \left( dx \right)$. So suppose $\lim_{n \to \infty} d_{1} \left( \nu_{n}, \nu \right) = 0$, and denote by $M_{n}$ the mean of $\nu_{n}$, i.e.\ $M_{n} = \int x \nu_{n} \left( dx \right)$. We need to show that in this case
\begin{equation*}
\lim\limits_{n \to \infty} \left| \left\langle g_{f}\left( x \right) w \left( x - M_{n} \right), \nu_{n} \right\rangle - \left\langle g_{f}\left( x \right) w \left( x - M \right), \nu \right\rangle \right| = 0.
\end{equation*}
We use the triangle inequality to arrive at
\[
\begin{aligned}
\left| \left\langle g_{f}\left( x \right) w \left( x - M_{n} \right), \nu_{n} \right\rangle - \left\langle g_{f}\left( x \right) w \left( x - M \right), \nu \right\rangle \right| &\leq \left| \left\langle g_{f}\left( x \right) \left[ w \left( x - M_{n} \right) - w \left( x - M \right) \right], \nu_{n} \right\rangle \right|\\
&\quad+ \left| \left\langle g_{f}\left( x \right) w \left( x - M \right), \nu_{n} \right\rangle - \left\langle g_{f}\left( x \right) w \left( x - M \right), \nu \right\rangle \right|.
\end{aligned}
\]
The first term on the right hand side can be bounded above by $2a' \left| M_{n} - M \right|$. This term goes to zero as $n \to \infty$, due to $\lim_{n \to \infty} d_{1} \left( \nu_{n}, \nu \right) = 0$ and the equivalence of (1) and (2) in Lemma~\ref{lem:was_metric}. For every $f \in H$, $g_{f}\left( x \right) w \left( x - M \right)$ is a bounded and continuous function, and consequently the second term goes to zero as $n \to \infty$ as well, due to $\lim_{n \to \infty} d_{1} \left( \nu_{n}, \nu \right) = 0$ and the equivalence of (1) and (2) in Lemma~\ref{lem:was_metric}. Thus $h_{f}$ is continuous for every $f \in H$.

Consequently the continuous mapping theorem and Lemma~\ref{lem:EKex1} together imply that
\begin{equation*}
\left\langle g_{f}\left( x \right) w \left( x - m_{n} \left( \cdot \right) \right), \mu_{n} \left( \cdot \right) \right\rangle \Rightarrow_{n} \left\langle g_{f}\left( x \right) w \left( x - m \left( \cdot \right) \right), \mu \left( \cdot \right) \right\rangle
\end{equation*}
in $D\left( [0,\infty), \mathbb{R} \right)$. Lemma~\ref{lem:EKex2} then implies that
\begin{equation*}
\int\limits_{0}^{\cdot} \left\langle g_{f}\left( x \right) w \left( x - m_{n} \left( s \right) \right), \mu_{n} \left( s \right) \right\rangle ds \Rightarrow_{n} \int\limits_{0}^{\cdot} \left\langle g_{f}\left( x \right) w \left( x - m \left( s \right) \right), \mu \left( s \right) \right\rangle ds
\end{equation*}
in $D\left( [0,\infty), \mathbb{R} \right)$. The integral function on the right hand side is continuous, which implies that the one-dimensional distributions converge weakly in $\mathbb{R}$, i.e.~\eqref{eq:conv2} holds.
\end{proof}

\begin{proof}[Proof of Corollary~\ref{cor:limit_solves_mfe}]
Fix $t \geq 0$ and $f \in H$. Theorem~\ref{thm:error_vanishes} implies that
\begin{equation*}
A_{t,f} \left( \mu_{n} \left( \cdot \right) \right) \Rightarrow_{n} 0
\end{equation*}
in $\mathbb{R}$. On the other hand, Theorem~\ref{thm:weak_conv_of_eq} implies that
\begin{equation*}
A_{t,f} \left( \mu_{n} \left( \cdot \right) \right) \Rightarrow_{n} A_{t,f} \left( \mu \left( \cdot \right) \right)
\end{equation*}
in $\mathbb{R}$. However, we know that a sequence cannot converge weakly to two different limits (Billingsley~\cite[Section 2]{billingsley1999convergence}), and hence $A_{t,f} \left( \mu \left( \cdot \right) \right) = 0$ almost surely.
\end{proof}


\subsection{Uniqueness of the solution to the mean field equation}\label{ch:Uniqueness}

In this subsection we prove Theorem~\ref{thm:cont_dep_init_cond} and Corollary~\ref{cor:uniqueness}.

\begin{proof}[Proof of Theorem~\ref{thm:cont_dep_init_cond}]
We have to show that there exists a constant $c$ such that~\eqref{eq:cont_dep_init_cond} holds. In order to do this we prove that there exists a constant $c$ such that
\begin{equation}\label{eq:Gronwall_condition}
d_H \left( t \right) \leq d_H\left( 0 \right) + c \int\limits_{0}^{t} d_H \left( s \right) ds,
\end{equation}
from which~\eqref{eq:cont_dep_init_cond} follows using Gr\"{o}nwall's lemma.

So let us now prove~\eqref{eq:Gronwall_condition}. Denote by $m^{1} \left( t \right)$ and $m^{2} \left( t \right)$ the mean of $\mu^{1}\left( t \right)$ and $\mu^{2}\left( t \right)$ respectively: $m^{i} \left( t \right) = \int x \mu^{i} \left( t , dx \right)$ for $i=1,2$. We know that $\mu^{1} \left( \cdot \right)$ and $\mu^{2} \left( \cdot \right)$ are solutions to the mean field equation, with initial conditions $\mu_{0}^{1}$ and $\mu_{0}^{2}$ respectively, and so
\begin{equation*}\label{eq:mean_field_gen}
A_{t,f} \left( \mu^{i} \left( \cdot \right) \right) = 0
\end{equation*}
holds for all $f \in H$, for $i=1,2$ (recall~\eqref{eq:A_def}). Using this, the definition of the metric $d_H$, and simple inequalities, we have
\begin{align*}
d_H \left( t \right) &= \sup\limits_{f \in H} \left| \left\langle f, \mu^{1} \left( t \right) \right\rangle - \left\langle f, \mu^{2} \left( t \right) \right\rangle \right|\\
&\leq \sup\limits_{f \in H} \left| \left\langle f, \mu_{0}^{1} \right\rangle - \left\langle f, \mu_{0}^{2} \right\rangle \right| \\
&\quad+ \int\limits_{0}^{t} \sup\limits_{f \in H} \left|  \left\langle \left( \E \left( f \left( x + Z \right) \right) - f\left( x \right) \right) w\left( x - m^{1} \left( s \right) \right), \mu^{1} \left( s \right) \right\rangle  \right.\\
&\quad- \left.  \left\langle \left( \E \left( f \left( x + Z \right) \right) - f\left( x \right) \right) w\left( x - m^{2} \left( s \right) \right), \mu^{2} \left( s \right) \right\rangle \right| ds.
\end{align*}
The first term on the right hand side is exactly $d_H\left(0\right)$, and so what we have to show is that the supremum inside the integral is at most a constant multiple of $d_H\left( s \right)$. As in the proof of Theorem~\ref{thm:weak_conv_of_eq}, define
\[
 g_{f} \left( x \right) := \left( \E \left( f \left( x + Z \right) \right) - f\left( x \right) \right)
\]
in order to abbreviate notation. We know that $g_{\Id} = \E\left( Z \right) = 1$ and that for every $f \in C_{b}$, $\left| f \right| \leq 1$, $g_{f}$ is continuous and $\left|g_{f} \right| \leq 2$. First we use the triangle inequality:
\begin{multline*}
\sup\limits_{f \in H} \left|  \left\langle g_{f} \left( x \right) w\left( x - m^{1} \left( s \right) \right), \mu^{1} \left( s \right) \right\rangle - \left\langle g_{f} \left( x \right) w\left( x - m^{2} \left( s \right) \right), \mu^{2} \left( s \right) \right\rangle \right|\\
\begin{aligned}
&\leq \sup\limits_{f \in H} \left|  \left\langle g_{f} \left( x \right) w\left( x - m^{1} \left( s \right) \right), \mu^{1} \left( s \right) \right\rangle - \left\langle g_{f} \left( x \right) w\left( x - m^{1} \left( s \right) \right), \mu^{2} \left( s \right) \right\rangle \right|\\
&\quad+ \sup\limits_{f \in H} \left| \left\langle g_{f} \left( x \right)  w\left( x - m^{1} \left( s \right) \right) , \mu^{2} \left( s \right) \right\rangle - \left\langle g_{f} \left( x \right) w\left( x - m^{2} \left( s \right) \right) , \mu^{2} \left( s \right) \right\rangle \right|.
\end{aligned}
\end{multline*}
We bound the two terms separately by a constant multiple of $d_H\left( s \right)$. Let us start with the first term. If $f \in C_{b}$, $\left| f \right| \leq 1$, then $g_{f} \left( x \right) w\left( x - m^{1} \left( s \right) \right)$ is continuous in $x$ and its absolute value is at most $2a$, so consequently the expression in the supremum is at most $2a d_H\left( s \right)$. If $f = \Id$, then the expression in the supremum is $\left| \left\langle w\left( x - m^{1} \left( s \right) \right), \mu^{2} \left( s \right) \right\rangle - \left\langle w\left( x - m^{1} \left( s \right) \right), \mu^{1} \left( s \right) \right\rangle \right|$ which is at most $a d_H\left( s \right)$ since $w\left( x - m^{1}\left( s \right) \right)$ is continuous in $x$ and $\left| w \right| \leq a$. Thus the first term is at most $2a d_H\left( s \right)$.

Now let us look at the second term. Using the fact that $w'$ is bounded we have
\[
\left| w\left( x - m^{1} \left( s \right) \right) - w\left( x - m^{2} \left( s \right) \right)  \right| \leq a' \left| m^{1} \left( s \right) - m^{2}\left( s \right) \right| \leq a' d_H\left( s \right),
\]
and since $\left| g_{f} \right| \leq 2$ for all $f\in H$, it follows that the second term is at most $2a' d_H\left( s \right)$. Therefore this shows~\eqref{eq:Gronwall_condition} with $c:= 2a + 2a'$.
\end{proof}

\begin{proof}[Proof of Corollary~\ref{cor:uniqueness}]
If $d_H\left( 0 \right) = 0$ then, due to Theorem~\ref{thm:cont_dep_init_cond}, $d_H\left( t \right) = 0$ for all $t\geq 0$, and consequently $\mu^{1}\left( t \right) = \mu^{2} \left( t \right)$ for all $t\geq 0$, so $\mu^{1}\left( \cdot \right) = \mu^{2}\left( \cdot \right)$.
\end{proof}


\subsection{Proof of Theorem~\ref{thm:perkins_mod}}\label{ch:Perkins}

Before proving Theorem~\ref{thm:perkins_mod}, let us present a result for general Polish space state spaces that we use during the proof. Suppose $E$ is a Polish space and $d$ is a complete metric on $E$. For $x\left( \cdot \right) \in D\left( [0, \infty), E \right)$, $T > 0$, and $\delta > 0$, let
\begin{equation*}
w\left( x\left( \cdot \right), \delta, T \right) := \sup\limits_{0 \leq s,t \leq T, \left|s-t\right| \leq \delta} d \left( x\left( t \right), x\left( s \right) \right).
\end{equation*}
(We note that this $w$ should not be confused with our jump rate function $w$---the distinction will be obvious from the context.) Then:
\begin{theorem}\label{thm:general_C-rel_comp}
Suppose $\left\{ X_{n} \left( \cdot \right) \right\}_{n \geq 1}$ is a sequence of cadlag $E$-valued processes, where $E$ is Polish. $\left\{ X_{n} \left( \cdot \right) \right\}_{n \geq 1}$ is $C$-relatively compact in $D\left( [0,\infty), E \right)$ if and only if the following two conditions hold:
\begin{enumerate}[(a)]
\item For every $T > 0$ and every $\eps > 0$ there exists a compact set $K^{0} = K_{T,\eps}^{0} \subset E$ such that
\begin{equation*}
\sup\limits_{n} \p \left( X_{n} \left( t \right) \notin K^{0} \text{ for some } t \leq T \right) \leq \eps.
\end{equation*}
\item For every  $T > 0$ and every $\eps > 0$ there exists a $\delta > 0$ such that
\begin{equation*}
\limsup\limits_{n \to \infty} \p \left( w\left( X_{n} \left( \cdot \right), \delta, T \right) \geq \eps \right) \leq \eps.
\end{equation*}
\end{enumerate}
\end{theorem}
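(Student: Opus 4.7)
The plan is to prove both directions of the equivalence, with the sufficient direction being the main work. For $(a)+(b) \Rightarrow C$-relative compactness, I would first invoke the classical Skorohod tightness criterion in $D([0,\infty), E)$ for Polish $E$ (Ethier and Kurtz~\cite[Theorem 3.7.2]{ethier1986markov}): relative compactness in $D([0,\infty), E)$ follows from (a) together with a control on the Skorohod modulus
\[
w'(x, \delta, T) := \inf_{\{t_i\}} \max_i \sup_{s, t \in [t_{i-1}, t_i)} d(x(s), x(t)),
\]
where the infimum runs over partitions of $[0, T)$ with $t_i - t_{i-1} > \delta$. Choosing the uniform partition $t_i = 2i\delta$ immediately yields $w'(x, \delta, T) \leq w(x, 2\delta, T)$, so condition (b), applied with the $\delta$ halved, delivers the required control. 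This gives relative compactness of $\{X_n(\cdot)\}$ in $D([0,\infty), E)$.

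Next I would show any weak limit point $X(\cdot)$ is almost surely continuous. The key observation is that any $x \in D([0,\infty), E)$ with a jump of size $\eta$ at some $t_0 \in (0, T]$ satisfies $w(x, \delta, T) \geq \eta$ for every $\delta > 0$: take $s \to t_0-$ and $t = t_0$ with $|s-t| \leq \delta$, and use continuity of $d$ together with the cadlag property. Applying Skorohod's representation theorem in the Polish space $D([0,\infty), E)$ along a convergent subsequence $X_n \Rightarrow X$, we may realize the convergence as $X_n \to X$ almost surely via time-changes $\lambda_n \to \mathrm{id}$ uniformly on compacts, and these time-changes preserve the jump-size profile of the paths. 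Combined with condition (b), this yields
\[
\p\left(\sup_{t \leq T} d(X(t), X(t-)) \geq \epsilon\right) \leq \liminf_n \p\left(w(X_n, \delta_\epsilon, T) \geq \epsilon\right) \leq \epsilon
\]
for every $\epsilon > 0$, and intersecting over $T \in \mathbb{N}$ gives continuity of $X$ on $[0, \infty)$.

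For the converse, suppose $\{X_n(\cdot)\}$ is $C$-relatively compact in $D([0,\infty), E)$. Condition (a) follows from tightness via Prohorov's theorem and the standard characterization of tight sequences in $D([0,\infty), E)$ (compact containment is equivalent to tightness of the coordinate marginals together with the path behavior). For (b), I would argue by contradiction: if (b) failed, there would exist $T, \epsilon > 0$, a sequence $\delta_k \downarrow 0$, and a subsequence $X_{n_k}$ with $\p(w(X_{n_k}, \delta_k, T) \geq \epsilon) \geq \epsilon$ for all $k$. By $C$-relative compactness, along a further subsequence $X_{n_k} \Rightarrow X$ with $X$ almost surely continuous; since Skorohod convergence to a continuous limit is equivalent to uniform convergence on compacts, passing to a Skorohod representation gives $\|X_{n_k} - X\|_{\infty, [0,T]} \to 0$ almost surely, and the triangle inequality
\[
w(X_{n_k}, \delta_k, T) \leq 2 \|X_{n_k} - X\|_{\infty, [0,T]} + w(X, \delta_k, T),
\]
together with $w(X, \delta_k, T) \to 0$ almost surely (by continuity of $X$), forces $\p(w(X_{n_k}, \delta_k, T) \geq \epsilon) \to 0$, contradicting the lower bound of $\epsilon$.

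The main obstacle throughout is that the functional $x \mapsto w(x, \delta, T)$ is not continuous in the Skorohod topology, so direct Portmanteau arguments fail. I circumvent this in both directions by invoking Skorohod's representation theorem in Polish spaces: once we have almost-sure convergence (with time-changes), jump sizes transfer cleanly between approximants and limit in the sufficiency part, while in the necessity part the added rigidity that the limit is continuous upgrades Skorohod convergence to local uniform convergence, making $w$ tractable via a triangle inequality.
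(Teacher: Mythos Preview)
The paper does not give a self-contained proof of this statement; it simply records that the result ``follows from Corollary 3.7.4, Remark 3.7.3, and Theorem 3.10.2 of Ethier and Kurtz~\cite{ethier1986markov}.'' Your proposal is a correct reconstruction of what those citations cover: you obtain relative compactness from (a) together with the domination $w'(x,\delta,T)\le w(x,2\delta,T)$ of the Skorohod modulus (this is the content of Corollary 3.7.4), you obtain continuity of any limit point from (b) via a jump-size argument (the content of Theorem 3.10.2, which the paper states separately as Theorem~\ref{thm:weak_limit_cont}), and you handle the converse by Skorohod representation together with the fact that Skorohod convergence to a continuous limit upgrades to local uniform convergence. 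So your approach is essentially the same as the paper's, only spelled out rather than delegated to the reference.

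One small point worth tightening: in the necessity of (a) you appeal to ``the standard characterization of tight sequences in $D([0,\infty),E)$'' without saying what does the work. The cleanest justification is that a compact set $K\subset D([0,T],E)$ has $\{x(t):x\in K,\ t\in[0,T]\}$ relatively compact in $E$ (part of the Arzel\`a--Ascoli description of compacta in $D([0,T],E)$; see Ethier and Kurtz~\cite[Theorem 3.6.3]{ethier1986markov}), which immediately yields the uniform-in-$t$ compact containment from Prohorov tightness. This is presumably what Remark 3.7.3 in Ethier--Kurtz supplies; making it explicit would close the only real gap in your write-up.
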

This theorem follows from Corollary 3.7.4, Remark 3.7.3, and Theorem 3.10.2 of Ethier and Kurtz~\cite{ethier1986markov}. Note: (a) is called the \textit{compact containment condition}.

\begin{proof}[Proof of Theorem~\ref{thm:perkins_mod}]
To prove that $\left\{P_{n} \left( \cdot \right) \right\}_{n \geq 1}$ is C-relatively compact in $D\left( [0, \infty), \mathcal{P}_1 \left( \mathbb{R} \right) \right)$ we need to show that conditions (a) and (b) of Theorem~\ref{thm:general_C-rel_comp} follow from conditions (i) and (ii) of Theorem~\ref{thm:perkins_mod}. Notice that it is enough to demonstrate this for small $\eps$'s. First, let us start with the compact containment condition.
Let $T > 0$ and $\eps \in \left( 0,\frac13 \right)$ be fixed. To this $T > 0$, there exists a $c = c \left( T \right)$ and a $\tau = \tau\left( T \right) \in \left(0, 1\right)$ given by condition (i). For every $m \geq 1$, let $\eps_m := \tilde{c} \eps / m^{2/\tau}$, where $\tilde{c}^{-1} = \sum_{m \geq 1} m^{-2/\tau}$ so that $\sum_{m \geq 1} \eps_m = \eps$. Now by condition (i), to the fixed $T$ and $\eps_{m}$ there exists a $K_{m} := K_{T, \eps_m} \in \mathbb{R}$ such that 
\begin{equation*}
K_m \leq \frac{c}{\eps_{m}^{1-\tau}} = \frac{c}{\left( \tilde{c} \eps \right)^{1-\tau}} m^{2/\tau - 2}
\end{equation*}
and
\begin{equation*}
\sup\limits_{n} \p \left( \sup\limits_{0 \leq t \leq T} P_{n} \left( t, (-\infty, -K_{m}) \cup (K_{m}, \infty) \right) > \eps_m \right) < \eps_m.
\end{equation*}
This implies that 
\begin{equation*}
\sup\limits_{n} \p \left( \sup\limits_{0 \leq t \leq T} P_{n} \left( t, (-\infty, -K_{m}) \cup (K_{m}, \infty) \right) > 1 / m^{2/\tau} \right) < \eps_m,
\end{equation*}
since $\eps_m < 1 / m^{2/\tau}$. Now given the $K_m$'s, we define the following set of probability measures:
\begin{equation*}
C^0 := \left\{ \mu \in \mathcal{P}_{1} \left( \mathbb{R} \right) : \mu \left( (-\infty, -K_{m}) \cup (K_{m}, \infty) \right) \leq 1 / m^{2/\tau} \text{ for every } m \geq 1 \right\}.
\end{equation*}
(Of course $C^0 = C_{T, \eps}^0$.) The choice of the $K_m$'s implies that
\begin{equation*}
\sup\limits_{n} \p \left( P_{n} \left( t \right) \notin C^{0} \text{ for some } t \leq T \right) \leq \eps,
\end{equation*}
since
\[
\begin{aligned}
\p \left( P_{n} \left( t \right) \notin C^{0} \text{ for some } t \leq T \right) &= \p \left( \exists m \geq 1, \exists t \geq T : P_{n} \left( t, (-\infty, -K_{m}) \cup (K_{m}, \infty) \right) > 1 / m^{2/\tau} \right)\\
&= \p \left( \exists m \geq 1 : \sup\limits_{0 \leq t \leq T} P_{n} \left( t, (-\infty, -K_{m}) \cup (K_{m}, \infty) \right) > 1 / m^{2/\tau} \right)\\
&\leq \sum\limits_{m=1}^{\infty} \p \left( \sup\limits_{0 \leq t \leq T} P_{n} \left( t, (-\infty, -K_{m}) \cup (K_{m}, \infty) \right) > 1 / m^{2/\tau} \right)\\
&\leq \sum\limits_{m=1}^{\infty} \eps_m = \eps.
\end{aligned}
\]
Consequently,
\begin{equation*}
\sup\limits_{n} \p \left( P_{n} \left( t \right) \notin K^{0} \text{ for some } t \leq T \right) \leq \eps,
\end{equation*}
for $K^0 := \overline{C^0}$. This $K^0$ will be good as the compact set needed in condition (a)---what is left to show is that $K^0$ is compact in $\mathcal{P}_{1} \left( \mathbb{R} \right)$. For this, it is enough to show that $C^0$ is relatively compact in $\mathcal{P}_{1} \left( \mathbb{R} \right)$. We know (see, for instance, Feng and Kurtz~\cite[Appendix D]{feng2006large}) that $C^0 \subset \mathcal{P}_{1} \left( \mathbb{R} \right)$ is relatively compact if and only if
\begin{equation}\label{eq:rel_comp_feltetel}
\lim\limits_{N \to \infty} \sup\limits_{\mu \in C^0} \int\limits_{\left|x\right| > N} \left|x\right| \mu \left( dx \right) = 0.
\end{equation}
So let us now check~\eqref{eq:rel_comp_feltetel}. Note that this is where the original form of Perkins' theorem~\cite[Theorem II.4.1]{perkins1999dawson} is not enough and we have to use our modified version, Theorem~\ref{thm:perkins_mod}. If $N > K_m$, then
\begin{align*}
\sup\limits_{\mu \in C^0} \int\limits_{\left|x\right| > N} \left|x\right| \mu \left( dx \right) &\leq \sup\limits_{\mu \in C^0} \sum\limits_{l = m}^{\infty} \int\limits_{[-K_{l+1}, -K_{l}) \cup ( K_{l}, K_{l+1} ] } \left|x\right| \mu\left(dx\right)\\
&\leq \sup\limits_{\mu \in C^0} \sum\limits_{l = m}^{\infty} K_{l+1} \mu\left( \left( - \infty, -K_{l}\right) \cup \left( K_{l}, \infty \right) \right)\\
&\leq \sum\limits_{l = m}^{\infty} \frac{c}{\left( \tilde{c} \eps \right)^{1-\tau}} \left( l + 1 \right)^{2/\tau - 2} \frac{1}{l^{2/\tau}}.
\end{align*}
This last expression goes to 0 as $m \to \infty$, which shows~\eqref{eq:rel_comp_feltetel}, and consequently we have shown that condition (i) implies condition (a).

Now let us show that conditions (i) and (ii) imply condition (b). First we show that for every $f \in C_{b} \left( \mathcal{P}_{1} \left( \mathbb{R} \right) \right)$, $\left\{ f \circ P_{n} \left( \cdot \right) \right\}_{n \geq 1}$ is $C$-relatively compact in $D\left( [0,\infty), \mathbb{R} \right)$. By the characterization of $C$-relatively compactness we need to check conditions (a) and (b) of Theorem~\ref{thm:general_C-rel_comp}. Due to the boundedness of $f$, (a) is immediate. Now let us show (b). Fix $f \in C_{b} \left( \mathcal{P}_{1} \left( \mathbb{R} \right) \right)$, $T > 0$, and $\eps > 0$. Choose $K^0 = K_{T, \eps}^{0}$ as in (a). Define
\[
A = \Bigl\{ h : \mathcal{P}_1 \left( \mathbb{R} \right) \to \mathbb{R}\, \Big| \, h \left( \mu \right) = \sum\limits_{i = 1}^{k} a_i \prod\limits_{j=1}^{m_i} \left\langle f_{i,j}, \mu \right\rangle, a_i \in \mathbb{R}, f_{i,j} \in C_{b}, k, m_i \in \mathbb{Z}_{+} \Bigr\} \subset C_{b} \left( \mathcal{P}_1 \left( \mathbb{R} \right) \right).
\]
Then $A$ is an algebra containing the constant functions and separating points in $\mathcal{P}_1 \left( \mathbb{R} \right)$. So by the Stone-Weierstrass theorem (see for instance Willard~\cite{willard2004general}) there exists an $h \in A$ such that
\begin{equation*}
\sup\limits_{\mu \in K^0} \left| h\left( \mu \right) - f \left( \mu \right) \right| < \eps.
\end{equation*}
If $\left\{ Y_{n} \left( \cdot \right) \right\}_{n \geq 1}$ and $\left\{ Z_{n} \left( \cdot \right) \right\}_{n \geq 1}$ are $C$-relatively compact in $D\left( [0,\infty), \mathbb{R} \right)$ then so are $\left\{ a Y_{n} \left( \cdot \right) + b Z_{n} \left( \cdot \right) \right\}_{n \geq 1}$ and $\left\{ Y_{n} \left( \cdot \right) Z_{n} \left( \cdot \right) \right\}_{n \geq 1}$ for any $a,b \in \mathbb{R}$ (see the characterization of Theorem~\ref{thm:general_C-rel_comp}). Therefore condition (ii) of the theorem implies that $\left\{ h \circ P_{n} \left( \cdot \right) \right\}_{n \geq 1}$ is $C$-relatively compact in $D\left( [0, \infty), \mathbb{R} \right)$. Now using the characterization of Theorem~\ref{thm:general_C-rel_comp} again, there exists a $\delta > 0$ such that
\begin{equation*}
\limsup\limits_{n \to \infty} \p \left( w \left( h \circ P_{n} \left( \cdot \right), \delta, T \right) \geq \eps \right) \leq \eps.
\end{equation*}
To abbreviate notation, we introduce $P_n \left( \left[0,T\right] \right) := \left\{ P_n \left( t \right) : t \in \left[0,T\right) \right\}$. If $s,t \leq T$, then
\begin{align*}
\left| f\left( P_{n} \left( t \right) \right)  - f\left( P_{n} \left( s \right) \right) \right| &= \left| f\left( P_{n} \left( t \right) \right)  - f\left( P_{n} \left( s \right) \right) \right| \mathbf{1} \left[ P_{n} \left( \left[0,T\right] \right) \subseteq K^0 \right]\\
&+ \left| f\left( P_{n} \left( t \right) \right)  - f\left( P_{n} \left( s \right) \right) \right| \mathbf{1} \left[ P_{n} \left( \left[0,T\right] \right) \nsubseteq K^0 \right]\\
&\leq 2 \sup\limits_{\mu \in K^0} \left| h\left( \mu \right) - f\left( \mu \right) \right| + \left| h \left( P_{n} \left( t \right) \right) - h \left( P_{n} \left( s \right) \right) \right|\\
&+ 2 \left\| f \right\|_{\infty} \mathbf{1} \left[ P_{n} \left( \left[0,T\right] \right) \nsubseteq K^0 \right]
\end{align*}
and so by the definition of $w \left( f \circ P_{n} \left( \cdot \right), \delta, T \right)$ we have
\begin{equation*}
w \left( f \circ P_{n} \left( \cdot \right), \delta, T \right) \leq 2 \eps + w \left( h \circ P_{n} \left( \cdot \right), \delta, T \right) + 2 \left\| f \right\|_{\infty} \mathbf{1} \left[ P_{n} \left( \left[0,T\right] \right) \nsubseteq K^0 \right].
\end{equation*}
Therefore
\[
\begin{aligned}
\limsup\limits_{n \to \infty} \p \left( w \left( f \circ P_{n} \left( \cdot \right), \delta, T \right) \geq 3 \eps \right) &\leq \limsup\limits_{n \to \infty} \p \left( P_{n} \left( \left[0,T\right] \right) \nsubseteq K^0 \right) + \limsup\limits_{n \to \infty} \p \left( w \left( h \circ P_{n} \left( \cdot \right), \delta, T \right) \geq \eps \right)\\
&\leq \eps + \eps = 2 \eps < 3 \eps,
\end{aligned}
\]
by the choice of $K^0$ and the fact that $\left\{ h \circ P_{n} \left( \cdot \right) \right\}_{n \geq 1}$ is $C$-relatively compact in $D\left( [0, \infty), \mathbb{R} \right)$.

So let us recap what we have done up until now: we have shown that (i) implies the compact containment condition and that (i) and (ii) imply that for every $f \in C_{b} \left( \mathcal{P}_{1} \left( \mathbb{R} \right) \right)$, $\left\{ f \circ P_{n} \left( \cdot \right) \right\}_{n \geq 1}$ is $C$-relatively compact in $D\left( [0, \infty), \mathbb{R} \right)$. These imply that $\left\{ P_{n} \left( \cdot \right) \right\}_{n \geq 1}$ is relatively compact in $D\left( [0, \infty), \mathbb{R} \right)$ (see Theorem 3.9.1 of Ethier and Kurtz~\cite{ethier1986markov}). What remains is to show that it is not only relatively compact, but also $C$-relatively compact.

We now show that (i) and (ii) imply (b). Let us define the metric $d := \min \left\{ d_{1}, 1 \right\}$ and let 
\begin{equation*}
w_{d} \left( P_{n} \left( \cdot \right), \delta, T \right) := \sup\limits_{0 \leq s,t \leq T, \left|s-t\right| \leq \delta} d \left( P_{n}\left( t \right), P_{n}\left( s \right) \right).
\end{equation*}
Let $T > 0$ and $\eps > 0$ be fixed, and choose $K^0$ as in (a) again. Since $K^{0}$ is compact, we can choose $\mu_i \in K^0$, $i = 1, \dots, M$, so that $K^0 \subseteq \cup_{i=1}^{M} B\left( \mu_{i}, \eps \right)$, where
\[
 B\left( \mu, \eps \right) = \left\{ \nu \in \mathcal{P}_{1} \left( \mathbb{R} \right) : d_{1} \left( \mu, \nu \right) < \eps \right\}.
\]
Since $\eps < 1$,
\[
 B\left( \mu, \eps \right) = \left\{ \nu \in \mathcal{P}_{1} \left( \mathbb{R} \right) : d \left( \mu, \nu \right) < \eps \right\}.
\]
Now let $f_{i} \left( \mu \right) := d\left( \mu_{i}, \mu \right)$. Clearly $f_{i} \in C_{b} \left( \mathcal{P}_{1} \left( \mathbb{R} \right) \right)$.  We have proved that for every $f \in C_{b} \left( \mathcal{P}_{1} \left( \mathbb{R} \right) \right)$, $\left\{ f \circ P_{n} \left( \cdot \right) \right\}_{n \geq 1}$ is $C$-relatively compact in $D\left( [0, \infty), \mathbb{R} \right)$, which implies that there exists a $\delta > 0$ such that
\begin{equation}\label{eq:utolso}
\sum\limits_{i=1}^{M} \limsup\limits_{n \to \infty} \p \left( w \left( f_{i} \circ P_{n} \left( \cdot \right), \delta, T \right) \geq \eps \right) \leq \eps.
\end{equation}
If $\mu, \nu \in K^0$, choose $\mu_j$ so that $d\left( \nu, \mu_j \right) \leq \eps$. Then
\[
d\left( \mu, \nu \right) \leq d\left( \mu, \mu_j \right) + d\left( \mu_j, \nu \right) \leq \left| d\left( \mu, \mu_j \right) - d\left( \mu_j, \nu \right) \right| + 2 d\left( \mu_j, \nu \right) \leq \max\limits_{i} \left| f_{i} \left( \mu \right) - f_{i} \left( \nu \right) \right| + 2 \eps.
\]
Now let $s,t \leq T$; the above inequality implies
\begin{align*}
d\left( P_{n} \left( t \right), P_{n} \left( s \right) \right) &= d\left( P_{n} \left( t \right), P_{n} \left( s \right) \right) \mathbf{1} \left[ P_{n} \left( \left[0,T\right] \right) \subseteq K^0 \right] + d\left( P_{n} \left( t \right), P_{n} \left( s \right) \right) \mathbf{1} \left[ P_{n} \left( \left[0,T\right] \right) \nsubseteq K^0 \right]\\
&\leq \max\limits_{i} \left| f_{i} \circ P_{n} \left( t \right) - f_{i} \circ P_{n} \left( s \right) \right| + 2 \eps + \mathbf{1} \left[ P_{n} \left( \left[0,T\right] \right) \nsubseteq K^0 \right],
\end{align*}
where we used that $d \leq 1$. Consequently
\begin{equation*}
w_{d} \left( P_{n} \left( \cdot \right), \delta, T \right) \leq \max\limits_{i} w \left( f_{i} \circ P_{n} \left( \cdot \right), \delta, T \right) + 2 \eps + \mathbf{1} \left[ P_{n} \left( \left[0,T\right] \right) \nsubseteq K^0 \right].
\end{equation*}
Now note that $\left\{ w \left( P_{n} \left( \cdot \right), \delta, T \right) \geq 3 \eps \right\}$ and $\left\{ w_d \left( P_{n} \left( \cdot \right), \delta, T \right) \geq 3 \eps \right\}$ are exactly the same events, and consequently
\begin{multline*}
\limsup\limits_{n \to \infty} \p \left( w \left( P_{n} \left( \cdot \right), \delta, T \right) \geq 3 \eps \right) = \limsup\limits_{n \to \infty} \p \left( w_d \left( P_{n} \left( \cdot \right), \delta, T \right) \geq 3 \eps \right)\\
\begin{aligned}
&\leq \limsup\limits_{n \to \infty} \p \left( \max\limits_{i} w \left( f_{i} \circ P_{n} \left( \cdot \right), \delta, T \right) \geq \eps \right) + \limsup\limits_{n \to \infty} \p \left( P_{n} \left( \left[0,T\right] \right) \nsubseteq K^0 \right)\\
&\leq \limsup\limits_{n \to \infty} \sum\limits_{i=1}^{M} \p \left(  w \left( f_{i} \circ P_{n} \left( \cdot \right), \delta, T \right) \geq \eps \right) + \limsup\limits_{n \to \infty} \p \left( P_{n} \left( \left[0,T\right] \right) \nsubseteq K^0 \right)\\
&\leq \eps + \eps = 2 \eps < 3 \eps,
\end{aligned}
\end{multline*}
using~\eqref{eq:utolso} and the choice of $K^0$. This shows condition (b), and therefore completes the proof.
\end{proof}

\section*{Acknowledgments}

The authors wish to thank J\'anos Engl\"ander, Attila R\'akos, Bal\'azs R\'ath and Imre P\'eter T\'oth for stimulating discussions and their continuous interest in this project. We also thank Thomas G. Kurtz for bringing to our attention other ways of proving tightness, Marianna Bolla for helping us with statistics-related problems we encountered during simulations, and anonymous reviewers for helpful comments. 

\bibliographystyle{plain}
\bibliography{biblio}

\end{document}